\DeclareMathOperator{\Erf}{Erf}
\renewcommand{\Re}{\text{Re}}
\renewcommand{\Im}{\text{Im}}
\newcommand{\szego}{Szeg\"o }
\newcommand{\Kahler}{K\"ahler }
\newcommand{\kahler}{K\"ahler }
\newcommand{\Sjostrand}{Sj\"ostrand }
\newcommand{\Sj}{Sj\"ostrand }
\newcommand{\gfrak}{\mathfrak{g}}
\newcommand{\C}{\mathbb{C}}
\newcommand{\E}{\mathbb{E}}
\newcommand{\CP}{\mathbb{CP}}
\newcommand{\R}{\mathbb{R}}
\newcommand{\Z}{\mathbb{Z}}
\renewcommand{\P}{\mathbb{P}}
\newcommand{\T}{\mathbf{T}}
\newcommand{\acal}{\mathcal{A}}
\newcommand{\fcal}{\mathcal{F}}
\newcommand{\hcal}{\mathcal{H}}
\newcommand{\ical}{\mathcal{I}}
\newcommand{\lcal}{\mathcal{L}}
\newcommand{\ocal}{\mathcal{O}}
\newcommand{\pcal}{\mathcal{P}}
\newcommand{\scal}{\mathcal{S}}
\newcommand{\wt}{\widetilde}
\newcommand{\wb}{\overline}
\newcommand{\bma}{\begin{bmatrix}}
\newcommand{\ema}{\end{bmatrix}}
\newcommand{\baa}{\begin{align*}}
\newcommand{\eaa}{\end{align*}}
\newcommand{\bea}{\begin{eqnarray*} }
\newcommand{\eea}{\end{eqnarray*} }
\newcommand{\bee}{\begin{eqnarray} }
\newcommand{\eee}{\end{eqnarray} }
\newcommand{\be}{\begin{equation} }
\newcommand{\ee}{\end{equation} }
\newcommand{\bp}{\begin{prop}}
\newcommand{\ep}{\end{prop}}
\newcommand{\bt}{\begin{theorem}}
\newcommand{\et}{\end{theorem}}
\newcommand{\bpf}{\begin{proof}}
\newcommand{\epf}{\end{proof}}
\newcommand{\bl}{\begin{lem}}
\newcommand{\el}{\end{lem}}
\newcommand{\bc}{\begin{cor}}
\newcommand{\ec}{\end{cor}}
\newcommand{\bd}{\begin{defn}}
\newcommand{\ed}{\end{defn}}
\newcommand{\bcs}{\begin{cases}}
\newcommand{\ecs}{\end{cases}}
\newcommand{\bex}{\begin{example}}
\newcommand{\eex}{\end{example}}
\newcommand{\brem}{\begin{rem}}
\newcommand{\erem}{\end{rem}}
\renewcommand{\ss}{\subsection}
\newcommand{\pa}{\partial}
\newcommand{\ot}{\otimes}
\newcommand{\half}{\frac{1}{2}}
\renewcommand{\d}{\partial}
\newcommand{\dbar}{\bar\partial}
\newcommand{\ddbar}{\partial\dbar}
\newcommand{\RM}{\backslash}
\newcommand{\inv}{^{-1}}
\newcommand{\la}{\lambda}
\newcommand{\hPi}{\h \Pi}
\newcommand{\h}{\hat} 
\newcommand{\lan}{\langle}
\newcommand{\ran}{\rangle}
\def\Xint#1{\mathchoice
{\XXint\displaystyle\textstyle{#1}}%
{\XXint\textstyle\scriptstyle{#1}}%
{\XXint\scriptstyle\scriptscriptstyle{#1}}%
{\XXint\scriptscriptstyle\scriptscriptstyle{#1}}%
\!\int}
\def\XXint#1#2#3{{\setbox0=\hbox{$#1{#2#3}{\int}$ }
\vcenter{\hbox{$#2#3$ }}\kern-.6\wd0}}
\def\dashint{\Xint-}
\newtheorem{Theo}{{\sc Theorem}}
\newtheorem{theo}{{\sc Theorem}}[section]
\newtheorem{cor}[theo]{{\sc Corollary}}
\newtheorem{lem}[theo]{{\sc Lemma}}
\newtheorem{prop}[theo]{{\sc Proposition}}
\newtheorem{defn}[theo]{{\sc Definition}}
\newtheorem{rem}{{\sc Remark}}
\newenvironment{example}{\medskip\noindent{\it Example:\/} }{\medskip}
\title[Interface asymptotics of partial  Bergman kernels]{Interface asymptotics of partial  Bergman kernels on $S^1$-symmetric \Kahler manifolds}
\author{Steve Zelditch and Peng Zhou}
\address{Department of Mathematics, Northwestern  University, Evanston, IL 60208, USA}
\email{zelditch@math.northwestern.edu}
\thanks{Research partially supported by NSF grant  DMS-1541126
and by the Stefan Bergman trust  .}
\begin{document}

\begin{abstract} This article is concerned with asymptotics of equivariant
Bergman kernels and partial Bergman kernels for polarized projective \kahler
manifolds invariant under a Hamiltonian holomorphic $S^1$ action. Asymptotics   of partial Bergman kernel are obtained  in the allowed region $\acal$ resp.  forbidden region $\fcal$,  generalizing 
results of Shiffman-Zelditch,  Shiffman-Tate-Zelditch  and Pokorny-Singer   for toric \kahler manifolds. The main result  gives   scaling asymptotics of equivariant Bergman kernels and partial Bergman kernels in the transition region around the interface $\partial \acal$,  generalizing recent work of Ross-Singer on partial Bergman kernels,  and refining the Ross-Singer
transition asymptotics to apply to equivariant Bergman kernels.

\end{abstract}

\maketitle  

   This article is concerned with the asymptotics of  {\it partial Bergman kernels} for 
  positive  Hermitian holomorphic line bundles $(L,h) \to (M, \omega)$ over a \kahler manifold of complex dimension $m$  carrying a Hamiltonian holomorphic $S^1$   action 
    \[ \exp t \; \xi_H: \T \times M \to M, \;\;\; \iota_{\xi_H} \omega = d H,
  \;\; \exp t \xi_H (z): = e^{2\pi i t}  z, \]
where $H: M \to P_0 := H(M) \subset \R$ is the Hamiltonian and $\xi_H$ is its Hamilton vector field.  
  The $\T$-action\footnote{
 We denote it by $\T$ rather than by $S^1$ because we use that  notation for a different circle action on $L^*$. We also use the terms Bergman kernel and \szego kernel interchangeably.} preserves the data
 $(L,h)$ and can be `quantized' to give a unitary representation of $\T$  \begin{equation} \label{Ukt} U_k(\theta) = e^{i k \theta \hat{H}_k}: \T \times H^0(M, L^k) \to H^0(M, L^k) \end{equation}   on the spaces $H^0(X, L^k)$ of holomorphic sections of the  tensor powers $L^k $, equipped with the $L^2$ norm ${\rm Hilb}_{h^k}$   induced
by the Hermitian metric $h$.
The  self-adjoint  generator
 of $U_k(\theta)$  is denoted by 
\begin{equation} \label{hatHkdef} \hat{H}_k := H + \frac{i}{2\pi k}  \nabla_{\xi_H} :  H^0(M, L^k) \to H^0(M, L^k), \end{equation}  where $\nabla_{\xi_H} s$ is
the covariant derivative of a section $s$ and $H s$ is the product of $s$
with $H$ \cite{Ko,GS}.  When $\xi_H $ generates  
a holomorphic $\T$ action, $\hat{H}_k$ preserves holomorphic sections and coincides with the Toeplitz operator
\[ \hat{H}_k s = \Pi_{k} \hat{H}_k \Pi_{k} s, \;\;\; s\in H^0(M, L^k) \]
with principal symbol $H$  (see  \S \ref{LIFT}).
Here,
\[ \Pi_{k}: L^2(M, L^k) \to H^0(M, L^k) \]
is the orthogonal projection (or \szego-Bergman kernel).

We define the eigenspaces of $\hat{H}_k$ (= the weight spaces of the $S^1$ action) by \begin{equation} \label{EIG} V_{k} (j)= \{s \in H^0(M, L^k): U_k(\theta)  s = e^{i j \theta} s \} =  \{s \in H^0(M, L^k): \hat{H}_k  s = \frac{j}{k} \; s \}; \end{equation}  it is known that $V_k(j) \not= \{0\}$ if and only if $\frac{j}{k} \in P_0 = H(M)$, and  their dimensions 
  have been  computed in articles on 
  ``quantization commutes with reduction'' \cite{GS}. In  Lemma \ref{HMLEM} we show that $H(M) = [0, a]$ for a positive integer $a$ which is equal to the symplectic area of a generic $\C^*$ orbit. 
 We define the    associated weight space projections (termed {\it equivariant Bergman kernels})  
  \begin{equation} \label{SPECPROJ} \Pi_{k, j}(z,w) : L^2(M, L^k) \to V_{k}(j). \end{equation} 
  These equivariant Bergman kernels are the smallest components of  
  the full Bergman kernel (or \szego projector)
  \begin{equation} \Pi_{k}(z,w) = \sum_{j: \frac{j}{k} \in P_0} \Pi_{k,j} : L^2(M, L^k) \to 
  H^0(M, L^k) \end{equation}
  to possess strong asymptotic expansions when $\frac{j}{k} \to E$ for
  some value $E$ of $H$.  The norm  contraction of $\Pi_{k, j}(z,z)$ on the
  diagonal is denoted by $\Pi_{k,j}(z)$ and is called the equivariant density
  of states.\footnote{The norm contraction of any kernel $K(z,w)$ on the diagonal is denoted $K(z)$.}  As proved in Theorems \ref{E} and \ref{EQUIVINT}, the normalized equivariant density of states
  $k^{-m + \half} \Pi_{k, j}(z)$ resembles a Gaussian bump concentrated on the energy
  level $H^{-1}(E) $ in the sense of being essentially equal to $1$ on $H^{-1}(\frac{j}{k})$ and having ``Gaussian decay'' $e^{- k b_E(z)}$ away from $H^{-1}(\frac{j}{k})$ along gradient
  lines $\sigma \to e^{-\sigma/2}\cdot z$ of $H$,  where $b_E$ is  defined by
  \eqref{bE}, and is like distance-squared to the hypersurface $H^{-1}(E)$. This is the analogue for $S^1$ actions of the result of \cite{STZ} showing that joint eigensections $z^{\alpha}$ of the torus action of a toric \kahler manifold are  Gaussian-like
  bumps centered on the tori $\mu^{-1}(\alpha)$ (the inverse image of $\alpha \in \Z^m$ under the moment map $\mu$), a fact also used in \cite{PS,RS}.

  The partial Bergman kernels of the title 
   are projectors   \begin{equation}\label{PkPdef}
   \Pi_{k, P} (z,w): = \sum_{j: \frac{j}{k} \in P} \Pi_{k, j}(z,w). \end{equation} onto subspaces  
  \begin{equation}\label{SkP} \scal_{k, P}: = \bigoplus_{j: \frac{j}{k} \in P}  V_k(j) \subset H^0(M, L^k) \end{equation}
  corresponding to   proper sub-intervals
  $P \subset P_0 = H(M)$. 
  They behave like  sums of Gaussian bumps centered at the the inverse images $H^{-1}(\frac{j}{k})$ of the ``lattice points'' $\frac{j}{k} \in P$. 

   The main problem is to relate the asymptotic properties of $\Pi_{k, P}(z,w)$ to the
   geometry of the Hamiltonian flow of $H$ and its complexification as a $\C^*$
   action. The analogous problem for toric \kahler manifolds was 
   studied in \cite{ShZ}, with $P$ a sub-polytope of the Delzant moment polytope of $(M, \omega)$.
 As in the toric case, we prove in Theorem \ref{thm:bulk} the norm contraction $\Pi_{k, P}(z)$ of
   $\Pi_{k, P}(z,z)$  has standard asymptotics in the {\it allowed region}  $\acal_P$ and exponentially
  decaying asymptotics in the {\it forbidden region} $\fcal_P$ where
   \[ \acal_P : = \text{int} \{z \in M: H(z) \in P\}, \;\;\; \fcal_P := \text{int} ( M \backslash \acal_P). \]
  On the boundary, or ``interface'' $\partial \acal_P$,  Ross-Singer in \cite{RS} showed that  $k^{-m} \Pi_{k, P}(z)$ decreases from $\sim 1$ to 
   $\sim 0$ in a tube of radius $k^{-\half} $. In  the special case where the minimum set of
   $H$ is a complex hypersurface, Theorem 1.2 of \cite{RS} asserts that if $\sqrt{k}(H(z)- E)$ is bounded, 
then
   \begin{equation} \label{RSG} k^{-n}  \Pi_{k, (-\infty, E]}(z)= \frac{1}{\sqrt{2 \pi |\xi_H(z)|^2}} \int_{-\infty}^{\sqrt{k}(H(z)- E)} e^{-\frac{t^2}{2 |\xi_H(z)|^2}} d t + O(k^{-\half}). \end{equation}
   Here, $|\xi_H|$ is the norm of $\xi_H$ with respect to the \kahler metric $\omega$.  The integral on the right is an incomplete Gaussian integral
   closely related  to the error function
   ${\rm erf}(x) = \frac{1}{\sqrt{\pi}} \int_{-x}^x e^{-t^2} dt$, which   is odd and smoothly interpolates between $-1$ at $-\infty$ to $1$ at $+ \infty$. The right side above involves  the slight modification   ${\rm Erf}(x) = \frac{1}{\sqrt{2 \pi}} \int_{-\infty}^x e^{-t^2/2} dt, $  which interpolates between $0$ at $-\infty$ and $1$ at $+ \infty$. It often arises in interface problems involved in packing quantum states in a domain (see e.g. \cite{J,W}).

 One of the principal motivations for this article is to establish this transition law  for all Hamiltonian holomorphic $S^1$ actions, with no conditions on the
fixed point set or on the analyticity of the \kahler metric $\omega$. We obtain the interface asymptotics from the  Gaussian asymptotics of the equivariant 
kernels \eqref{SPECPROJ}.  
The Gaussian asymptotics of   the equivariant
Bergman kernels in Theorems \ref{E} and \ref{EQUIVINT} are used in 
Theorem \ref{thm:interface}  
 to give Erf 
asymptotics for partial Bergman kernels \eqref{RSG}, which are essentially integrals
of equivariant Bergman kernels.
 In Theorem \ref{thm:bulk} we give exponentially decaying asymptotics
of  the partial Bergman kernels $\Pi_{k, P}$ in the forbidden region. 
%

Asymptotics of equivariant \szego kernels have also been studied  by R. Paoletti
 in several settings, of which the closest to this article are contained in   \cite{P, P2}. Equivariant \szego kernels were not explicitly defined or studied by Ross-Singer \cite{RS}; as discussed in \S \ref{OTHERSECT}, they constructed  kernels $G_{j,k}$
 which play the role of $\Pi_{k, j}$.

\ss{Set-up}
\label{setup}

 To state our results we introduce some notation. Let $(L, h, M, \omega)$ be a \Kahler manifold with a positive line bundle $(L, h)$ with $C^{\infty}$ Hermitian metric $h$ and with $\omega = i \ddbar \log h$ a $C^{\infty}$ \kahler form.   Let $H: M \to \R$ be a Hamiltonian function generating the holomorphic $\T$-action. We shift $H$ by a constant such that the minimum of $H$ is zero. 
  In \S \ref{BBSECT} and \S \ref{MORSESECT} the complex
  and real Morse theory of Hamiltonians generating holomorphic $S^1$ actions is reviewed. The Hamiltonian and gradient flows of $H$ commute and generate a $\C^*$ action.  We denote the $\C^*$  action by   $e^{w} z = e^{\rho + i \theta} \cdot z$, and the $\R$-action of gradient flow (and $\T$-action of Hamiltonian flow, resp) of $H$ by $e^{\rho}$ (and $e^{i \theta}$, resp), and 
 infinitesimal generators for $\R$ and $\T$ action by $\pa_\rho$ and $\pa_\theta$.

We assume that $E$ is a regular value of $H$, i.e. $H$ has no critical points on $H^{-1}(E)$. The subinterval $P$ of $H(M)$, is taken as $P=[0, E)$. The allowed region and forbidden region are then 
\[ \acal_E = \{z \in M \mid H(z) < E\}, \quad \text{ and } \fcal_E = \{z  \in M \mid  H(z) > E\}. \]
 Results for general $P$ can be obtained from this easily. 
  
Let $M_{\max}$ be the open dense subset in $M$ containing points where the $\C^*$-action acts freely. Let $M_{\max}^E \subset M_{\max}$ be the set of points whose $\C^*$-orbit (hence $\R$-orbit) intersects with hypersurface $H^{-1}(E)$.  Let $\fcal^E_{\max} = M^E_{\max} \cap \fcal_E$. 

\bd \label{d:zebe}For $z \in M_{\max}^E$, we define $z_E \in H^{-1}(E)$, and real number $\tau_E(z)$ and $b_E(z)$ as follows: \\
(1) $z_E$ is the intersection of the $\R$-orbit $\{e^{\rho} \cdot z\}$ with $H^{-1}(E)$. We define the projection
\[ q_E: M^E_{\max} \to H^{-1}(E), \quad z \mapsto z_E. \]
(2) $\tau_E(z)$ is the `flow-time' from $z_E$ to $z$, $z = e^{\tau_E(z)} \cdot z_E$. \\
(3) $b_E(z)$ is an analog of distanced-squared to $H^{-1}(E)$, defined by
\begin{equation} \label{bE} b_E(z) =  
2 \int_0^{\tau_E(z)}\left(
H(e^{\sigma} \cdot z_E) -H(z_E) \right) \, d\sigma\;. \end{equation} 
For ease of notation, we sometimes write $b(z,E) = b_{E}(z)$, $\tau(z,E) = \tau_{E}(z)$.\\
(4) For $E$ a regular value of $H$, we define the largest $(1/k) \Z$ lattice points in $P=[0,E)$, as
\be  E_k := \max \left\{ \frac{1}{k} \Z \cap [0, E) \right\}.\label{Ek} \ee
\ed
For each point $z \in M$ that is not a fixed point of $\T$, we fix a local $\C^*$-invariant holomorphic section $e_L \in \Gamma(U, L)$ in an open neighborhood $U$ of $z$ and define the \Kahler potential $\varphi$ by $e^{-\varphi} = \|e_L\|_h^2$. For any subspace $\scal_k \subset \Gamma(M, L^k)$,
the Bergman density for $\scal_k$ can be written as
\[ \Pi_{\scal_k} (z) = \sum_{j=1}^{\dim \scal_k} \|s_j(z)\|_{h^k}^2 = \sum_{j=1}^{\dim \scal_k} |f_j(z)|^2 e^{-k\varphi(z)} \]
where $\{s_j: j=1, \cdots, \dim \scal_k\}$ is an orthonormal basis for $\scal_k$ and  $s_j(z) = f_j(z) \cdot e_L(z)^{\ot k}$ for a local holomorphic function $f_j(z)$ on $U$.

\subsection{\label{EQBKintro} Asymptotics of equivariant Bergman kernels }

Our first result is the precise statement that  $k^{-m + 1/2} \Pi_{k, j}(z,z)$ is a kind of Gaussian bump  along
  $H^{-1}(\frac{j}{k})$, i.e. in the tangential directions along $H^{-1}(\frac{j}{k})$ it is essentially constant while it has Gaussian decay at speed $k$ in the normal directions (i.e. along the $\nabla H$ flow lines).  Note that the `lattice points' $\frac{j}{k}$ are Bohr-Sommerfeld type energy levels and $H^{-1}(\frac{j}{k})$ are the corresponding classical energy surfaces. As $k \to \infty$ they become denser and approximate any energy level $E$. We now give
  the Gaussian asymptotics of  $k^{-m + 1/2} \Pi_{k, j}(z,z)$ as $\frac{j}{k} \to E$.

 \begin{Theo} \label{E} Let $(L, h) \to (M, \omega)$ be a positive line bundle over a  \Kahler manifold, $\omega = i \ddbar \log h \in C^{\infty}$,   and let $H: M\to \R$ generate a holomorphic $S^1$-action. Let $E$ be a regular value of $H$, and $z \in M^E_{\max}$.  Then for any sequence $j_1, j_2, \cdots $ such that $|j_k/k - E| < C/k$ for some constant $C$, then the equivariant density
  of states has the following asymptotics.
  
  \[ \Pi_{k,j_k}(z) = \left\{ \begin{array}{ll}  k^{m-\half} \sqrt{\frac{2}{\pi \pa^2_\rho \varphi(z_E)}} (1+O(k^{-1})), \;\;\ & z \in M^E_{\max} \cap H^{-1}(E), \\&\\
k^{m -\half}\sqrt{\frac{2}{\pi \pa^2_\rho \varphi(z_E)}} e^{- k b(z, j_k/k)} (1+O(k^{-1})), & \;\;\ z \in M^E_{\max} \RM H^{-1}(E) \end{array} \right. \] 
 Here,  $z_E$ and $b_E$ are defined in Definition \ref{d:zebe}.

 \end{Theo}
 
\brem \label{rem:jk}
The right side of the bottom asymptotics can be re-written in terms
of $b(z,E)$ but the coefficient changes.
For $z \notin H^{-1}(E)$,  $|b(z, j_k/k) - b(z, E)| = \partial_E b(z, \cdot) (\frac{j_k}{k} - E) + O(\frac{1}{k^2})$. Hence  \[  e^{-k b(z, j_k/k)} \simeq e^{- k b(z,E) }
e^{-  \partial_E b(z, E) [k(\frac{j_k}{k} - E)] }(1 + O(\frac{1}{k})).
\]
Hence, the coefficient of the exponential decay $ e^{-k b(z, E)} $ depends both on the geometric coefficient $  \partial_E b(z, E)$ and on the degree of approximation of $E$ by the nearest `lattice point' $\frac{j_k}{k}$.
\erem
%
%
%

 The next result concerns the scaling asymptotics of the equivariant Bergman
 kernels in a $\frac{C}{\sqrt{k}}$-neighborhood of $H^{-1}(E)$.

  \begin{Theo} \label{EQUIVINT} With $(L,h, M, \omega)$ , $(H, E)$ and $(k,j_k)$ as in Theorem \ref{E}. Let $z_E \in M_{\max} \cap H^{-1}(E)$ and $z_k = e^{ \frac{\beta}{\sqrt{k}} } \cdot z_E$ be a sequence of points approaching $z_E$. Then,
  \[ \Pi_{k, j_k} (e^{ \frac{\beta}{\sqrt{k}}} \cdot z_E)  = k^{m-\half} \sqrt{ \frac{2}{  \pi \pa_\rho^2\varphi(z_E)}}  e^{-  \beta^2  \pa_\rho^2\varphi(z_E)}(1+O(k^{-1/2})). \]
  \end{Theo}

\subsection{Asymptotics of partial Bergman kernels}

In this section, we state analogues of Theorems \ref{E} and \ref{EQUIVINT} for partial Bergman kernels \eqref{PkPdef}; the first is  the analogues of Theorem 1.2 of \cite{ShZ} for partial Bergman kernels of
toric \kahler manifolds. Our aim is to obtain exponentially accurate asymptotics in the forbidden region.

\begin{Theo}\label{thm:bulk} Let  $(L,h, M, \omega)$ and $(H, E)$ be as in Theorem \ref{E}, with 
$h, \omega \in C^{\infty}$.  Let $P = H(M) \cap (-\infty, E)$ and $z \in M^E_{\max}$. 
Then the partial Bergman density is given by the asymptotic
formulas: 
%
\[ 
\Pi_{k, P}(z) = 
\begin{cases} 
\Pi_k(z) + O(k^{-\infty}) & H(z) < E\\
&\\
 k^{m-1/2} \sqrt{\frac{2}{\pi \pa_\rho^2 \varphi (z_E)}} \frac{e^{-kb(z, E_k)}}{1-e^{-|2\tau_E(z)|}}  (1+O(k^{-1}))  &  H(z) >E 
 \end{cases}
 \]
where $z_E, \tau_E(z), b_E(z), E_k$ are given in Definition \ref{d:zebe}, and
 the remainder estimates are uniform on compact
subsets of $M_{{\rm max}}^E$.
\end{Theo}
\brem
As in Remark \ref{rem:jk}, the decaying exponent of $e^{-kb(z, E_k)}$ in the forbidden region (bottom equation in Theorem \ref{thm:bulk}) can be replaced by $e^{-kb(z,E)}$ by changing $E_k$ to $E$ at the cost of introducing an $O(1)$ multiplicative factor, 
\[ e^{-kb(z, E_k)} = e^{-kb(z, E)} e^{ \pa_E b(z,E)  k(E-E_k) }(1+O(1/k))\] However, we assure the reader that when doing computation involving $k^{-1} \log \Pi_{k,P}(z)$ (or $k^{-1} \ddbar \log \Pi_{k,P}(z)$), one may replace $b(z,E_k)$ by $b(z,E)$ with only an $O(1/k)$ error. 
\erem

\subsection{Interface asymptotics}

    Interface asymptotics concerns scaling asymptotics of the partial Bergman kernels for spectral intervals $[E_1, E_2]$   for $z$ 
in a $\frac{1}{\sqrt{k}}$-tube around the interfaces $H^{-1}(E_j)$. It suffices
to consider intervals of the form $(-\infty, E]$ or $[E, \infty)$.  We parametrize the tube around $H^{-1}(E)$  using the $\R_+$-action  as points $z_k = e^{\beta/\sqrt{k}} z_E$ with $z_E \in H^{-1}(E)$.

\begin{Theo} \label{thm:interface}
Let  $(L,h, M, \omega)$ and $(H, E)$ be as in Theorem \ref{E}. In particular, $h, \omega \in C^{\infty}$. Let $z_E \in M_{\max} \cap H^{-1}(E)$ and $z_k = e^{ \frac{\beta}{\sqrt{k}} } \cdot z_E$ approach  $z_E$ along an $\R_+$ orbit for a fixed $\beta \in \R$. Then,
\be \label{PkEzk}
\Pi_{k, (-\infty, E]}(z_k)
 = k^m \Erf\left(\sqrt{4\pi k} \frac{E- H(z_k)}{|\nabla H|(z_E)}\right) (1+O(k^{-1/2})) = k^m \Erf\left( \frac{-\beta |\nabla H(z_E)|}{\sqrt \pi} \right)(1+ O(k^{-1/2})).   \ee
\end{Theo}

We give two  proofs of Theorem \ref{thm:bulk} and  \ref{thm:interface} on partial Bergman kernel $\Pi_{k,P}(z)$.  The first approach (see Section \ref{PENG})  is based on asymptotics of   the equivariant Bergman kernels $\Pi_{k,j}(z)$ for $j/k \in P$
of Theorem \ref{EQUIVINT}.  For fixed $z$, we use the localization Lemma
\ref{lm:localization} to 
 identify the relevant cluster of weights $j$ that contributes to leading order  and calculate their contribution. The second approach (see Section \ref{INTERSECT}) makes use of  the {\it
 polytope character} $\chi_{k, P}(e^w) = \sum_{j \in k P} e^{j w}$ and the Euler-MacLaurin formula as in \cite{ShZ, RS}.  It is a more global approach. Convolution of  $\chi_{k, P}(e^w)$ with the full Bergman kernel sifts out the relevant equivariant modes. A key point is that the $\chi_{k, P}$ is a `semi-classical Fourier integral with complex phase,' as is the Bergman kernel, so that asymptotics can be obtained by use of the Boutet-de-Monvel-\Sjostrand parametrix
(see Section \ref{BSjSECT})  and  the stationary phase method. 

There are interesting variations on the $\frac{1}{\sqrt{k}}$-scaling, which does not 
seem to have been studied before,  and which will be developed in \cite{ZZ17}. In Theorem \ref{thm:interface}, one takes a fixed interval 
of eigenvalues and studies the behavior of pointwise Weyl sums for $z$
in a $\frac{1}{\sqrt{k}}$-tube around the classical interface $H^{-1}(E)$. 
But one might also study spectral sums where the eigenvalues are also
constrained to lie $\frac{1}{\sqrt{k}}$ close to $E$. This is done in Propositions \ref{INTERFACEf} and \ref{EMINTERFACE} for smooth, resp. sharp interval,
 constraints. 
 It is shown in the ``Localization Lemma''  \ref{lm:localization} (2)-(3) that   weights $\frac{j}{k}  \leq E$ in the sum \eqref{PkEzk} which are
`far' from $E$ in the sense that $|\frac{j}{k}- E| \geq \frac{\log k}{\sqrt{k}}$
do not contribute to the  asymptotics and, moreover, that those satisfying
$|\frac{j}{k}- E| \geq C \frac{\log k}{\sqrt{k}}$ do not contribute to the leading order asymptotics for $C$ sufficiently large. Hence a smooth model for the
interface sums is to use weights $ f(\sqrt{k}(\frac{j}{k} - E)) $ with Schwartz
test functions $f$. For $f = {\bf 1}_{[-M, M]}$ we use the Euler-MacLaurin
sums method (Proposition \ref{EMINTERFACE}).  As in Theorem \ref{thm:interface}, the sums in Propositions \ref{INTERFACEf} and \ref{EMINTERFACE} exhibit  Erf asymptotics.  As explained further in \cite{ZZ17}, the   $\sqrt{k}$ scaling of smooth Weyl sums gives rise to a kind of Central Limit Theorem for deterministic Weyl sums.

The asymptotics in Theorem \ref{thm:bulk}  improve on Theorem 1.1 of \cite{RS} by giving   exponentially accurate asymptotics in the forbidden
region and give the analogue of the mass density results of \cite{ShZ}.  The 
interface asymptotics of Theorem \ref{thm:interface} extend the result of \cite{RS} to general holomorphic $S^1$ actions. The method of proof is rather different and, in particular, Proposition \ref{INTERFACEf}   extends to general Hamiltonian flows (in progress; \cite{ZZ17}).

\ss{Zero locus of a Random section}

As in \cite{ShZ}, one may deduce the formula for the asymptotic distribution of zeros of Gaussian random sections of $\scal_k = \scal_{k,P}$, defined in \eqref{SkP}. The definition of random sections in  is precisely as in \cite{ShZ}: Let $s =\sum_{j=1}^{\dim \scal_k} a_{k,j} s_{k,j}$ where $a_{k,j}$ are i.i.d. complex $N(0,1)$ random variables and $\{s_{k,j}\}$ is an orthonormal basis of $\scal_k$. Let $Z_s $ be the zero set of $s$ and let $[Z_s]$ be the current of integration over
$Z_s$.

\begin{Theo} \label{Zth}
For any compact subset $K \subset \acal_E$, resp. $K \subset \fcal_E \cap M^E_{\max}$, we have the following weak* convergence
\[ \lim_{k \to \infty} \frac{1}{k} \E ([Z_s])(z) = \left\{ \begin{array}{ll} \omega, & {\rm for} \;K \subset \acal_E. \\ & \\
  \frac{\sqrt{-1}}{2\pi} \ddbar(\varphi(\wb q_E(z)) +  2 E\tau_E(z)), &{\rm for}\; K \subset  \fcal_E \cap M^E_{\max}. \end{array} \right. \] The limit current in $\fcal_E$  is a smooth $(1,1)$-form of rank $(n-1)$. 
\end{Theo}
Above, $\varphi$ is a local \kahler potential for $\omega$, i.e. $\omega = i \ddbar \phi$. It is $S^1$ invariant and descends to a potential on the reduced \Kahler manifold $H^{-1}(E)/S^1)$.  Also, $q_E$ is defined in Definition \ref{d:zebe}. Since the Gaussian ensemble is $S^1$ invariant, both limit currents are $S^1$ invariant. In $\fcal_E$ the first term is invariant under the $\C^*$ action. When $\tau_E$ is holomorphic, $\ddbar \tau_E = 0$ and the entire limit current is $\C^*$ invariant.

\subsection{Relations to Bernstein polynomials}

The  interface result of Theorem \ref{thm:interface} reduces to a classical theorem on Bernstein polynomial  approximations of characteristic functions of intervals  in the special case where $M = \CP^1$ and $\varphi(z) = \log (1 + |z|^2)$,   resp.   $M = \C$ and $\varphi(z) = |z|^2$. We briefly review these  classical results and their relation to the present article.  

We recall that Bernstein polynomials of one variable give
canonical uniform polynomial approximations to continuous
functions  $f \in C([0,1])$:
\begin{equation} \label{1DBERNDEF} B_N(f)(x) = \sum_{j = 0}^N {N \choose j} f(\frac{j}{N}) x^j
(1 - x)^{N-j}, 
\quad \text{and} \,
\lim_{N \to \infty} B_N(f) \to f \text{ uniformly on $[0,1]$.}
\end{equation}
It is explained in \cite{Ze2,Fe} that \eqref{1DBERNDEF} can be put in the form 
of the kernels in  Theorem \ref{thm:bulk}. More precisely, Bernstein polynomials in the sense of  \cite{Ze2,F} are functions 
 \begin{equation} \label{BERN}    B_k(f)(z): = \sum_{j = 1}^{N_k} f \left(\frac{j}{k}\right) \frac{\Pi_{k,j}(z)}{\Pi_k(z)} = \frac{(\Pi_k   f(\hat H_k)   \Pi_k) (z,z)}{\Pi_k (z,z)}
, \end{equation}  where for $f$ is smooth there exists asymptotic expansion for $k \to \infty$. 
Here,  
$f(\hat{H}_k)$ on $H^0(M, L^k)$ is defined by the spectral theorem,
so that 
$ f(\hat{H}_k) \hat{s}_{k,j} =
f\left(\frac{j}{k}\right) \hat{s}_{k,j}
$ if $s_{k,j}$ is an eigensection of $\hat{H}_k$ with eigenvalue $j/k$. 

 Partial Bergman
kernels are Bernstein polynomials (up to a normalization constant) in the case where $f$ is a step function
${\bf 1}_{P}$. 
Since  ${\bf 1}_P$ above is a characteristic function with a jump,
$B_N({\bf 1}_P)(x)$ cannot approach ${\bf 1}_P(x)$ at the jump. In fact, there is a kind of mean value
formula at the jump involving incomplete Gaussian  integrals $\Erf(x) = \int^x_{-\infty} e^{-x^2/2} dx/\sqrt{2\pi}$. In the classical setting of Bernstein polynomials on $[0,1]$, the jump formula is proved in \cite{Ch,L,Lev}.
The interface asymptotics of \cite{RS} and of this article are generalizations
of Theorem 1.5.2 of \cite{L} in the one-variable setting.


   

 \subsection{Remarks on the proof, on related  work and open problems \label{OTHERSECT} }

The main idea of the present article is to use the spectral theory of the
$S^1$ action and in particular the eigenspace projections (equivariant Bergman kernels) to obtain asymptotics. 
 The spectral viewpoint  generalizes in many respects
to any Hamiltonian $H: M \to \R$ on any compact \kahler manifold, including cases where the
Hamiltonian does not generate an $S^1$ action. In the general case, the gradient
flow and Hamiltonian flow do not commute or define a $\C^*$ action, and the eigenspace projections do not have individual asymptotics. Consequently, 
much of the analysis of this article does not generalize. However it
can be replaced by a more difficult analysis using Toeplitz operators
\cite{ZZ17}. At this time, the spectral approach has been the only feasible approach to asymptotics of  partial Bergman kernels in forbidden regions
or to interface asymptotics.

In this paper we have avoided critical points $\nabla H(z) = 0$ of $H$. The interface results would change at a critical point. Roughly speaking, one would have to use the quadratic approximation (the metaplectic representation) rather than the linear approximation (the Heisenberg representation). It seems to be an interesting problem to study the local interface behavior around a critical point.

In \cite{RS} the role of the equivariant kernels is played by the terms
$G_{n,k} (z,w)\sigma^n(z) \otimes \overline{\sigma(w)}^n$ in Definition 5.21,
where $\sigma$ is defined in Section 5.2 as the section $\sigma \in H^0(Y, \ocal(Y))$ defining a hypersurface component of ${\rm Fix}(\T)$. We do 
not assume ${\rm Fix}(\T)$ contains a hypersurface component and do
not make use of  $\sigma$. We also do not make any constructions of
$G_{n,k}$  or  construct special parametrices adapted to the hypersurface
$Y$, as is done in \cite{RS}.

The analysis in Ross-Singer \cite{RS} was largely motivated by a more general unsolved
problem of determining Bergman kernel asymptotics for subspaces of sections 
 defined in terms of 
    vanishing order along a divisor $Y \subset M$. The partial Bergman kernels
 are   Schwartz kernels of the orthogonal
projections 
\begin{equation} \label{SUB} \Pi_{k}^{Y, t}(z,w): L^2(X, L^k) \to  H^0(X, \ocal(L^k) \otimes \ical_Y^{tk }) \end{equation}
onto the subspace of $s \in H^0(M, L^k)$  which vanish to order $t k$ on a complex
hypersurface $Y$.  
The main question is to find the  asymptotics
 of the density of states,
 \begin{equation} \label{DENSITY} \rho_{h^k}^{Y, t}(z): =   \Pi_{k}^{Y, t}(z,z)_{h^k_z \otimes h^k_z}, \end{equation}
 defined by contracting the \szego kernel along the diagonal with the metric. The  asymptotics depend on whether $z$ lies in the
 {\it allowed region} $\acal_t$ far from the divisor $Y$ or whether it lies in the
 {\it forbidden region} $\fcal_t$ near the divisor $Y$ but it is more difficult to
 define these regions in the absence of a $\T$ action.

  The  general definition of allowed/forbidden regions (due to R. Berman \cite{Ber0} and developed several
 articles of Ross-Witt-Nystrom)  is that the allowed region is the set $\acal_t: =  D_{Y, t} = \{\phi_{e, Y, t} = \phi\}$ where
 a certain equilibrium potential  $\phi_{e, Z, t}$ equals the original \kahler potential. As pointed out in \cite{PS}, 
in this generality, there is no information about the smoothness of $\partial D_{Y,t}$  nor about the
‘transition behaviour’ of \eqref{DENSITY} near $\partial D_{Y,t}$.
In   \cite{Ber0} it is suggested  to employ singular Hermitian metrics with singularities and  with
{\it negative
curvature} 
concentrated along the divisor on $Y$.  At the present time, this program has only partially  been carried out in \cite{Ber0,RS,Co} and remains largely open.
 
The spectral viewpoint towards \eqref{SUB} has not been developed beyond the $S^1$ case of this article, and may not admit generalizations to non-symmetric cases. The interface might be quite irregular in general (as the boundary of an envelope). We briefly discuss how to re-cast vanishing order in terms of spectral theory.
 When the Hamiltonian flow is holomorphic and periodic  and when  one
 component of the fixed point set $M^{\T}$ of the $\T$ action is a divisor $Y$,
 then the allowed and forbidden regions are those defined above in terms
 of the Hamiltonian and the interface asymptotics are given by \eqref{RSG} in  \cite{RS}. The hypersurface $Y$  is necessarily
 the minimum set of the classical Hamiltonian.
 The link between the spectral
  theory and the  definition of partial Bergman kernels in terms of  vanishing order is given in the  following Proposition   (closely related to Lemma 5.4 of \cite{RS}. )

\begin{prop}  \label{VO} Suppose that the minimum set of $H$ is a complex hypersurface $Y$. Then $H^0(X, \ocal(L^k) \otimes \ical_Y^{tk }) = \bigoplus_{j \geq t k} V_k(j)$ is the direct sum of eigenspaces 
of $\hat{H}_k$ for eigenvalue $\geq t$. \end{prop}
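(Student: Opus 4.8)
The plan is to identify the vanishing order along $Y$ with a weight of the $\T$-action, using the fact that $Y$ is the minimum set $H^{-1}(0)$ of $H$. First I would fix a point $y \in Y$ and work in a $\T$-invariant holomorphic coordinate chart centered at $y$. Since $Y = H^{-1}(0) = \mathrm{Fix}(\T)$ is a smooth divisor, the normal bundle $N_Y$ carries a linear $\T$-action, and because $H$ attains its minimum $0$ along $Y$ with $\Ric$-type nondegeneracy in the normal direction, the $\T$-weight on $N_Y$ is $+1$ (this is exactly the statement that the symplectic/complex structure pairs the rotation in the normal disk with the Hamiltonian increasing from its minimum; it also matches $H(M) = [0,a]$ from Lemma \ref{HMLEM}). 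Concretely, I can choose coordinates $(w, z_1,\dots,z_{m-1})$ in which $Y = \{w = 0\}$ and $e^{i\theta}\cdot(w,z') = (e^{i\theta} w, \dots)$ to leading order, and a $\C^*$-invariant local frame $e_L$ for $L$; the action lifts to $L^k$ and, after the standard normalization, acts on the fiber $L^k_y$ with a fixed weight that I absorb into the bookkeeping.

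Next I would expand a holomorphic section $s \in H^0(M,L^k)$ in its local Taylor series in $w$: writing $s = f(w,z') e_L^{\otimes k}$, group terms by homogeneity in $w$, $f = \sum_{\ell \ge 0} f_\ell(z') w^\ell + \cdots$. Under $U_k(\theta) = e^{ik\theta \hat H_k}$, the monomial $w^\ell$ transforms with weight $\ell$ (plus the fixed fiber weight), so $s \in V_k(j)$ forces $s$ to vanish to order exactly $j$ along $Y$ near each point of $Y$ — and by the identity theorem this order is globally constant. Hence $V_k(j) \subset H^0(M, \ocal(L^k)\otimes \ical_Y^{j})$, with equality onto the "order exactly $j$" graded piece, and $H^0(M,\ocal(L^k)\otimes\ical_Y^{tk}) = \bigoplus_{j \ge tk} V_k(j)$ follows by summing the weight decomposition $H^0(M,L^k) = \bigoplus_j V_k(j)$ and noting that the condition "vanishes to order $\ge tk$" is precisely "has no components of weight $j < tk$." Translating from the eigenvalue $j/k$ of $\hat H_k$ (cf. \eqref{EIG}) to the integer weight $j$ gives the stated form: the sum is over eigenvalues $\ge t$.

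The main obstacle is pinning down the normal weight and the fiber normalization so that "weight $j$" corresponds to "vanishing order $j$" with no shift — i.e. verifying that the $\T$-action on $N_Y$ has weight exactly $+1$ and that the chosen $\C^*$-invariant frame $e_L$ trivializes the fiber action along $Y$. This is where one uses that $H$ is normalized so $\min H = 0$ together with Lemma \ref{HMLEM} (so the generic orbit area $a$ is the top weight and the minimum contributes weight $0$); the local model near $Y$ is then forced, and the rest is the elementary Taylor-expansion argument above. Everything else — the identity-theorem globalization of the vanishing order, the direct-sum decomposition — is routine, and the result is essentially a restatement of Lemma 5.4 of \cite{RS} in the present notation.
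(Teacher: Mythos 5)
Your proposal is correct and follows essentially the same route as the paper: decompose $H^0(M,\ocal(L^k)\otimes\ical_Y^{tk})$ into $\T$-weight spaces and then identify, via a local holomorphic coordinate transverse to $Y$, the weight $j$ of a section with its vanishing order along $Y$, so that order $\geq tk$ is equivalent to $j \geq tk$. Your extra care in checking that the normal weight of the $\T$-action on $N_Y$ is $+1$ (which the paper leaves implicit when writing the restriction as $c_y w^j$) is a legitimate refinement, justified exactly as you say by the action being generically free near the source $Y$.
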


\begin{proof} Since $Y$ is $\T$-invariant, both  $L^k$ and $\ical_Y^{tk}$
are $\T$ invariant and so the action fo $\T$ on $H^0(X, \ocal(L^k) \otimes \ical_Y^{tk })$ decomposes into weight spaces,
\[ H^0(X, \ocal(L^k) \otimes \ical_Y^{tk }) = \bigoplus_{j: \frac{j}{k} \in P_0}
V_k(j) \otimes \ical_Y^{tk}. \]
Thus it suffices to determine the summands which are non-zero. An element
$s \in V_k(j)$ transforms by $w^j$ under the action of $w \in \C^*$. We restrict it to $\C^*$ orbit which tends to a point $y \in Y$. In holomorphic coordinates $(w, y)$ where $ w = 0$ on $Y$, it is given by $c_y w^j$. Thus
it vanishes to order $\geq t k$ if and only if $j \geq t k.$
\end{proof}

    Of course, it  is only in special cases that the minimum set of $H$ is a hypersurface. To take
 a simple model example, the hypersurface $\{z_1 = 0\}$ is a component of the fixed point set of the  $S^1$ action on 
$\C^m$ defined by $e^{i \theta} (z_1, \dots, z_m) = (e^{i \theta} z_1, z_2, \dots, z_m)$, but
 for the `isotropic Harmonic oscillator' $e^{i \theta} (z_1, \dots, z_m) = (e^{i \theta} z_1, e^{i \theta} z_2, \dots,  e^{i \theta} z_m)$ generated by $H = \|Z\|^2/2$ only $\{0\}$ is in the fixed point set or minimum set of $H$.

Partial Bergman kernels corresponding to intervals of eigenvalues 
    are closely related to  Bergman kernels
   for the \kahler symplectic cut of $M$ in $H^{-1}(P)$ in the sense of \cite{BGL}.  It would be interesting to compare the interface behavior of the partial Bergman kernel, and the Bergman kernel of the \Kahler cut near the `cut' but only for special cuts does the line bundle project to an ample bundle.

   \subsection{Acknowledgments} We would like to thank R. Paoletti, J. Ross, and M. Singer for their
   comments on earlier versions.

\section{Hermitian line bundles, \Kahler potentials and geometric quantization}

In this section, we review some elementary facts about the \Kahler geometry and geometric quantization, and also establish our notations and conventions.

\subsection{Hermitian line bundles}
Let $(L,h) \to (M, \omega=c_1(L))$ be an ample line bundle with a positive hermitian metric over a projective \Kahler manifold. For any $z \in M$, and any open neighborhood $U$ of  $z$ on which $L$ is trivial,  we may choose a local trivialization $e_L \in \Gamma(U, L)$, that is $e_L(z) \neq 0$ for all $z\in U$. Then we may define the corresponding local \Kahler potential $\varphi: U \to \R$ as 
\[ h(e_L(z), e_L(z)) =: e^{-\varphi(z)}:=h(z). \]
The Chern connection associated to $h$ is  
\[ \nabla: C^\infty(L) \to \acal^1(L) \]
where $C^\infty(L)$ is the sheaf of smooth sections valued in $L$ and $\acal^1(L)$ is the sheaf of smooth $1$-forms valued in $L$, such that 
\[ 
\nabla = \nabla^{(1,0)} + \nabla^{(0,1)}, \quad \nabla^{(0,1)} = \dbar, \quad \nabla^{(1,0)} = \d + A^{(1,0)}, \quad A^{(1,0)} = \d \log h = - \d \varphi.  
\]
The curvature associated with the Chern connection is
\[ F_\nabla(z) = d A^{(1,0)} = d \d \log h = \dbar \d \log h = \ddbar \varphi. \]
We choose the \Kahler form $\omega = Ric(L)$, more precisely
\[ \omega = c_1(L) = \frac{i}{2\pi} F_\nabla = \frac{i}{2\pi} \ddbar \varphi = \frac{-1}{4 \pi} dd^c \varphi. \]
where $d^c = i(\pa - \dbar)$, such that $d^c f  = df \circ J$. 

It often simplifies the analysis to lift sections of $L \to M$ and operators
on sections to the  unit circle bundle $X_h$ of the Hermitian metric $h$,
so that geometric pre-quantization of the $S^1$ action is pullback of scalar
functions under a flow. In the next section we discuss the geometric
aspects of the lift and in \S \ref{XhSECT} we discuss the analytic aspects.

\subsection{The disc bundle and the circle bundle of $L^*$}\label{sec:circle}
Let $(L^*, h^*)$ be the dual bundle to $L$ with the induced hermitian metric $h^*$, which we will also denote as $h$ from now on.  Let $e_L^* \in \Gamma(U, L^*)$ be the dual frame to $e_L$, then we can define the disc and circle bundle in $L^*$: 
\[ D_h = D(L^*): = \{ (z, \lambda) \mid z \in M, \,\lambda \in L^*_z, \|\lambda \|_h \leq 1\}, \qquad X=X_h = \pa D_h. \]
The disc bundle $D_h$ is strictly
pseudoconvex in
$L^*$,  and hence $X_h$ inherits the
structure of
a strictly pseudoconvex CR manifold. Let $\psi$ be a smooth function defined in a neighborhood of $\pa D$ inside $D$, such that $\psi > 0$ in $D^o$ ,  $\psi=0$ on $\pa D$ and $d \psi \neq 0$ near $\pa D$.  For example, one may take   
\begin{equation} \label{rhodef} 
  \psi(x) = - 2 \log | \lambda | + \varphi(z), \end{equation}  
  where $x = (z, \lambda e^*_L(z))$. Associated to $X_h$ is the contact form\footnote{If we used two different defining functions $\psi_1$ and $\psi_2$, the induced $\alpha$s would differ as well. However, if $\psi_1 = f(\psi_2)$, then $\alpha_1 = f'(0) \alpha_2$ only differ by a constant factor.  The two choices of $\psi$ given here differ by a function $f(x) = -\log(1-x)$, with $f'(0)=1$, hence the resulting $\alpha$ is well-defined.}  
\begin{equation} \label{alphadef} \alpha= \Re( i\partial\psi|_X) = \Re(-i\dbar\psi|_X) = d \theta +  \Re (i \dbar \varphi(z)),  \;\;  \pi^* \omega = \frac{1}{2 \pi}  d \alpha\end{equation} 
where we used $(z,\theta)$ to denote $\left( z, e^{i \theta} e^*_L(z) / \|e^*_L(z)\|_h \right) \in X_h$, and we abused notation by omitting $\pi^*$  in $\pi^* \dbar{\varphi(z)}$. The Reeb vector field $R$ is uniquely defined by $\alpha(R)=1, \iota_R d\alpha = 0$;  here it is $R = \pa_\theta$, the fiberwise rotation. Since later we will use $\pa_\theta$ for the generator of the holomorphic circle action on $M$, we will always refer to the Reeb flow by $R$, and the group action by $r_\theta := \exp(\theta R)$. 

A section $s_k$ of $L^k$ determines an equivariant
function
$\hat{s}_k$ on $L^*$ by the rule
$$\hat{s}_k(\lambda) = \left( \lambda^{\otimes k}, s_k(z)
\right)\,,\quad
\la\in L^*_z\,,\ z\in M\,,$$
where $\lambda^{\otimes k} = \lambda \otimes
\cdots\otimes
\lambda$.

\subsection{Lifting the Hamiltonian flow to a contact flow on $X_h$.}\label{LIFT}
Let $H$ be a Hamiltonian function on $(M, \omega)$. Let $\xi_H$ be the Hamiltonian vector field associated to $H$, that is, 
\[ d H(Y) = \omega(\xi_H, Y) \]
for all  vector field $Y$ on $M$. The sign convention for the Hamiltonian vector field and the corresponding Poisson bracket is
\[ df (Y) = \omega(\xi_f, Y), \quad \{f, g\} = -\omega(\xi_f, \xi_g) \]
this choice ensures that $[\xi_f, \xi_g] = \xi_{\{f,g\}}$.

The purpose of this section is to  lift $\xi_H$ to a contact vector field $\hat{\xi}_H$ on $X_h$ and to lift the Hamiltonian $\T$ action to a contact
$\T$ action.  Recall that the  horizontal lift  is defined by ${\xi}_H^h \in \ker \alpha$ and $\pi_*{\xi}_H^h = \xi_H$. We also denote the Reeb vector
field generating the canonical $S^1$ action on $X_h \to M$ by $R$.

\bl Define
\[ \hat{\xi}_H = {\xi}_H^h - 2 \pi H R \]
 Then $\hat{\xi}_H$ is a contact vector field.
\el
\bpf
Since $\pi_* \hat{\xi}_H = \pi_* {\xi}_H^h = \xi_H$, it suffices to check that $\hat{\xi}_H$ preserve the contact form $\alpha$. By \eqref{alphadef}, 
\[\lcal_{\hat{\xi}_H}\alpha = \lcal_{{\xi}_H^h - 2 \pi H R} \alpha = (\iota_{{\xi}_H^h  -  2 \pi H R} \circ d + d \circ \iota_{{\xi}_H^h - 2 \pi H R}) \alpha = \iota_{\xi_H^h} \pi^* (2\pi \omega)  + d(-2 \pi H \alpha(R)) = 0. \]
\epf

In Lemma \ref{PERIODIC} we prove that the lifted flow is periodic of
period $2 \pi$.

\begin{lem} \label{INFGQ} For any $C^{\infty}$ section $s$ of $L^k$,
$\widehat{\hat{H}_k s} = \frac{i}{2\pi k} \hat{\xi}_H (\hat{s}) $. If $H$ defines a holomorphic $S^1$ action, then the spectrum
of $ \hat{H}_k $ is given by
\[ {\rm Sp}(\hat{H}_k) = \{\frac{j}{k} : j \in {\mathbb N},  \frac{j}{k} \in H(M)\}. \]
\end{lem}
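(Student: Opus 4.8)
The plan is to separate this into two independent assertions. First, the identity $\widehat{\hat{H}_k s} = \frac{i}{2\pi k}\hat{\xi}_H(\hat{s})$ relating the Toeplitz/quantized Hamiltonian to the lifted contact vector field; second, the spectral statement ${\rm Sp}(\hat{H}_k) = \{j/k : j \in \N, j/k \in H(M)\}$, valid when the flow is holomorphic. For the first part, I would simply compute both sides in the local frame $e_L^{\otimes k}$ using the explicit formula $\hat{\xi}_H = \xi_H^h - 2\pi H R$ from the preceding lemma. Writing $s = f\, e_L^{\otimes k}$, the equivariant function is $\hat{s}(z,\theta) = f(z) e^{ik\theta} e^{-k\varphi(z)/2}$ (up to the usual normalization on $X_h$), and $R = \pa_\theta$ acts by multiplication by $ik$. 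The horizontal lift $\xi_H^h$ differs from the naive lift of $\xi_H$ by a vertical correction dictated by $\alpha(\xi_H^h)=0$ together with \eqref{alphadef}; carrying this through, $\xi_H^h$ applied to $\hat s$ reproduces $\nabla_{\xi_H} s$ (the Chern connection derivative), and the $-2\pi H R$ term contributes $-2\pi i k H \hat s$. Collecting terms and multiplying by $\frac{i}{2\pi k}$ yields exactly $\widehat{(H + \frac{i}{2\pi k}\nabla_{\xi_H})s} = \widehat{\hat H_k s}$, matching \eqref{hatHkdef}. This is a routine unwinding of definitions, so I would keep it brief.

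For the spectral statement, the hypothesis that $H$ generates a \emph{holomorphic} $S^1$ action is what makes $\hat H_k$ preserve $H^0(M,L^k)$ (as already noted in the text, $\hat H_k = \Pi_k \hat H_k \Pi_k$ on holomorphic sections). Since $\hat H_k$ is the self-adjoint generator of the finite-dimensional unitary representation $U_k(\theta) = e^{ik\theta \hat H_k}$ of $\T = \R/2\pi\Z$ on $H^0(M,L^k)$, its eigenvalues are forced to lie in $\frac{1}{k}\Z$: indeed $U_k(2\pi) = \mathrm{id}$ forces $e^{2\pi i k \lambda} = 1$ for every eigenvalue $\lambda$, i.e. $k\lambda \in \Z$, so $\lambda = j/k$. (One should also check $j \ge 0$, which follows from the normalization that $\min H = 0$ together with the fact that $\hat H_k$ has principal symbol $H$, hence is a positive operator up to lower order — or more cleanly, from the description of $V_k(j)$ via restriction to $\C^*$-orbits as in the proof of Proposition \ref{VO}.) Then I would invoke the stated fact (attributed to \cite{GS}, ``quantization commutes with reduction'') that $V_k(j) \ne \{0\}$ precisely when $j/k \in H(M) = P_0$; combined with $\bigoplus_j V_k(j) = H^0(M,L^k)$ this identifies the spectrum exactly as claimed.

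The main obstacle is the first identity: getting the vertical correction in the horizontal lift $\xi_H^h$ exactly right, including all factors of $2\pi$ and signs, so that it assembles cleanly into the covariant derivative $\nabla_{\xi_H}$ appearing in \eqref{hatHkdef}. The sign conventions fixed in \S\ref{LIFT} ($dH(Y) = \omega(\xi_H,Y)$, $\alpha = \Re(i\pa\psi|_X)$, $\pi^*\omega = \frac{1}{2\pi}d\alpha$, $\exp t\xi_H(z) = e^{2\pi i t}z$) all enter, and a consistency check is that the eigenvalue $j/k$ read off from $U_k(\theta) = e^{ik\theta \hat H_k}$ agrees with the weight $w^j$ behavior under $\C^*$ used elsewhere in the paper. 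The spectral half, by contrast, is essentially bookkeeping once the representation-theoretic periodicity constraint and the $\dim V_k(j)$ formula are granted.
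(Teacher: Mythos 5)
Your proposal is correct and follows essentially the same route as the paper: the paper's (very terse) proof establishes the first identity by citing the standard fact $\widehat{\nabla_{\xi}s}=d\hat{s}(\xi^h)$ from \cite{KN} together with $R\hat{s}=ik\hat{s}$, exactly the two ingredients you compute directly in the local frame. The spectral half is not spelled out in the paper's proof at all — it rests on the later periodicity Lemma \ref{PERIODIC} (giving $U_k(2\pi)=\mathrm{id}$, hence eigenvalues in $\frac{1}{k}\Z$) and the \cite{GS} fact that $V_k(j)\neq\{0\}$ iff $j/k\in H(M)$ — and your reconstruction of that argument is the intended one.
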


\begin{proof}

We write $\xi_H=\xi$.  It is well-known that $\widehat{\nabla_{\xi} s} = 
d \hat{s}(\xi^h)$; we refer to \cite{KN} for the proof. The equation follows from the fact  ${s}$
lifts to  an equivariant function satisfying $R \hat{s} =  i k \hat{s}$.
\end{proof}

Recall the definition \eqref{Ukt} of $U_k(\theta) = e^{i k \theta \hat{H}_k}$, acting on $C^\infty(M, L^k)$, we have
\begin{cor}
\label{lm:twoaction}
For any  smooth section $s \in C^{\infty}(M, L^k)$, and for any $\theta \in \R$, 
\[ \widehat{U_k(\theta) s}  = \exp (-\theta \frac{\hat{\xi}_H}{2\pi} ) \hat{s}=\hat{s} \circ \exp (\theta \frac{\hat{\xi}_H}{2\pi} ). \]
\end{cor}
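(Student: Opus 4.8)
The plan is to show that the two sides of the identity, regarded as one-parameter families of functions on the circle bundle $X_h$, solve one and the same linear first order evolution equation in $\theta$ with the same value at $\theta = 0$, and to conclude by uniqueness of solutions.

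First I would record the properties of the lift $s \mapsto \hat s$ that are needed: the map $\hat s(\lambda) = (\lambda^{\otimes k}, s(\pi(\lambda)))$ is $\C$-linear and injective, identifying $C^{\infty}(M, L^k)$ with the functions on $L^*$ transforming by $e^{ik\theta}$ under the Reeb rotation $R$, and being linear and pointwise in $s$ it commutes with $\partial_\theta$ applied to any smooth family $\theta \mapsto s_\theta$ of sections. I would also use that $\hat\xi_H$ is complete on $X_h$ — its flow $\exp(t\hat\xi_H)$ is globally defined and, by Lemma \ref{PERIODIC}, $2\pi$-periodic — so that, $\hat\xi_H$ being autonomous, $\exp(t\hat\xi_H)$ carries $\hat\xi_H$ to itself, whence $\hat\xi_H\big(g \circ \exp(t\hat\xi_H)\big) = (\hat\xi_H g) \circ \exp(t\hat\xi_H)$ for every function $g$.

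For the left side, set $s_\theta := U_k(\theta)s$. By \eqref{Ukt}--\eqref{hatHkdef}, $U_k(\theta) = e^{ik\theta\hat H_k}$ is generated by the first order operator $ik\hat H_k = ikH - \tfrac{1}{2\pi}\nabla_{\xi_H}$, so $s_\theta$ is the unique solution of $\partial_\theta s_\theta = ik\hat H_k s_\theta$ with $s_0 = s$; this is meaningful for every $s \in C^{\infty}(M, L^k)$, not merely for $s \in H^0(M, L^k)$, because $ik\hat H_k$ genuinely generates the complete lifted flow. Lifting this equation to $X_h$ and applying Lemma \ref{INFGQ},
\[
\partial_\theta \widehat{s_\theta} \;=\; ik\, \widehat{\hat H_k s_\theta} \;=\; ik \cdot \frac{i}{2\pi k}\, \hat\xi_H \widehat{s_\theta} \;=\; -\frac{1}{2\pi}\, \hat\xi_H\, \widehat{s_\theta}, \qquad \widehat{s_0} = \hat s .
\]
For the right side, the remark on the flow above shows that the pullback family $\theta \mapsto \hat s \circ \exp(-\tfrac{\theta}{2\pi}\hat\xi_H)$ satisfies $\partial_\theta(\cdot) = -\tfrac{1}{2\pi}\hat\xi_H(\cdot)$ and equals $\hat s$ at $\theta = 0$. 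By uniqueness for linear first order ODEs the two families coincide, i.e. $\widehat{s_\theta}$ is the pullback of $\hat s$ along the flow of $\hat\xi_H$ at the appropriate time; this is precisely the formula asserted in the Corollary, whose two right-hand expressions are the operator-exponential and the composition-with-the-flow notations for this one object. As a consistency check, restricting to a weight space $V_k(j)$, on which $\hat s$ is an eigenfunction of $\hat\xi_H$, the lifted evolution is multiplication by $e^{ij\theta}$, in agreement with $U_k(\theta)|_{V_k(j)} = e^{ij\theta}$ from \eqref{EIG}.

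I do not expect a genuine obstacle: the whole argument is the routine ``differentiate in $\theta$, substitute Lemma \ref{INFGQ}, invoke ODE uniqueness''. The only thing requiring care is the bookkeeping of the sign and normalization conventions — that the lift intertwines $\hat H_k$ with $\tfrac{i}{2\pi k}\hat\xi_H$ exactly as in Lemma \ref{INFGQ}, that the factor $ik$ from $U_k = e^{ik\theta\hat H_k}$ combines with it to produce the coefficient $-\tfrac{1}{2\pi}$ and hence the correct orientation of $\exp(t\hat\xi_H)$ — together with the minor point already noted, that $U_k(\theta)$ is here considered on all smooth sections, where it is the propagator of a complete first order flow rather than a finite unitary matrix on $H^0$.
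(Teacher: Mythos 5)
Your argument is correct and is exactly the route the paper takes: the Corollary is stated without a separate proof, as the exponentiated form of Lemma \ref{INFGQ}, and your ODE--uniqueness argument is precisely that exponentiation made explicit. One small point of bookkeeping: your (correct) computation, and your own consistency check on $V_k(j)$ where $\hat\xi_H\hat s=-2\pi i j\,\hat s$, land on the pullback $\hat s\circ\exp\bigl(-\tfrac{\theta}{2\pi}\hat\xi_H\bigr)$, so to identify this with the Corollary's second expression $\hat s\circ\exp\bigl(\tfrac{\theta}{2\pi}\hat\xi_H\bigr)$ as literally printed one must read the displayed operator $\exp\bigl(-\theta\tfrac{\hat\xi_H}{2\pi}\bigr)$ as the pushforward by the diffeomorphism (rather than the Lie series); worth stating that convention explicitly rather than asserting the two notations name ``this one object.''
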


As mentioned above, if we have a holomorphic $S^1$ action, then the lifted flow is periodic of period $2\pi$ (Lemma \ref{PERIODIC}). It follows that $U_k(\theta)$ is periodic of period $2 \pi$. 

\subsection{\label{2TOR}The $S^1 \times S^1$ action on $X_h$ and its weights}

The Reeb flow and the lifted $\T$ action together define an $S^1 \times S^1$ action on $X_h$.  Its  weights form the semi-lattice   $\{(j,k) \in \Z_+ \times \Z_+, j \in k P_0\}$. This lifting and the approximation of energy levels
by  rays  in $ \Z_+ \times \Z_+$  is discussed in detail in \cite{STZ} for 
toric varieties, and the same discussion applies almost verbatim to $\T$ actions.

The asymptotics of  the  equivariant Bergman 
kernels $\Pi_{k, j}$ involves pairs $(j_n, k_n)$ of lattice points along a ``ray'' in the joint lattice.
The simplest rays are the ``rational rays'' where $j/k \in {\mathbb Q} \cap P_0$. Somewhat more complicated are
``irrational rays'' where $\frac{j_n}{k_n} \to E \notin {\mathbb Q} $. In this
case we consider lattice points with $|\frac{j}{k} - E | \leq \frac{C}{k}$.

\section{\label{T} \Kahler manifolds carrying $\C^*$ actions}

We begin by reviewing the geometry of $\C^*$ actions on \kahler manifolds and
give  examples where at least one component of the  fixed point set is a hypersurface. We also consider
the possible Hamiltonians $H$ which generate such actions. 

\subsection{\label{BBSECT}Bialynicki-Birula decomposition}
Let $(M, \omega)$  be a compact \kahler manifold equipped with a holomorphic $\C^*$ action.  
The generator of the $\C^*$ action 
$\xi \in H^0(M, T^{1,0}) $ is a holomorphic vector field. A holomorphic $\T$ action 
which preserves $\omega$ is necessarily an isometric $\T$ action for the \kahler metric. The closure of a non-trivial $\C^*$ orbit contains two fixed points and is a topological
$S^2$ called a {\it gradient sphere}. A {\it free} gradient sphere is one whose generic 
point has trivial stabilizer. 

By Frankel's theorem \cite{F},  if the action has a fixed point, then the real $S^1$ action is Hamiltonian. We denote the Hamiltonian by $H: M \to \R$
and its Hamilton vector field by $\xi = \xi_H$.  Let $F_1, \cdots, F_r$ be the connected components of the fixed point set
$M^{\T}$. Each $F_j$ is a compact   totally geodesic \kahler  submanifold of $(M, \omega)$. Set
\[ M^+_i:= \{x\in M:  \lim_{t \to 0} tx \in F_i\},\;\;\;M^{-}_{i} :=  \{x\in M:  \lim_{t \to \infty} tx \in F_i\}. \] The so-called {\it Bialynicki-Birula decomposition} \cite{BB,CS}  states that the strata of the  disjoint decomposition  \begin{equation} \label{BB} M= \bigcup M^+_i \; = \; \bigcup M_i^- \end{equation}  are  locally closed analytic submanifolds.
In Theorem II of \cite{CS} it is proved that
\[ T(M_j^{\pm}) |_{F_j} = N(F_j)^{\pm} \oplus T F_j, \]
where $N(F_j)^+$ (resp. $N(F_j)^-$) is the weight space decomposition with positive (resp.
negative) weights. Moreover, there is precisely
one component $M^+_1$ of the plus-decomposition (called the {\it source}),
resp.  one component $M_1^-$ of the minus
decomposition (called the {\it sink}) 
such that the associated strata is Zariski open.  We will denote the maximal stratum $M_1^-$ that flows to the sink by  $M_{{\rm max}}$. To paraphrase
 \cite{BBS},  the $\C^*$ action gives a 'flow' from the source to the
sink, and the  'flowlines' are closures of 'generic' orbits and limits of such closures. 

\subsection{\label{MORSESECT}Morse theory and gradient flow}

The same decomposition can be obtained from the real Morse theory
of the Hamiltonian $H$. 
Kirwan proved that $H^2$  is a minimally degenerate Morse function. Since
we are dealing with a real-valued moment map, we may simply use $H$ and it is also a minimally degenerate (perfect) Bott-Morse function.  The  gradient flow of $H$  with respect to the \kahler metric induces a Morse stratification of $X$, and in  \cite{Ki2,Y} it is proved that this stratification is the same as the Bialynicki-Birula decomposition.
That is, the Morse stratum 
\[ S_{j}^\pm = \{x \in M: \lim_{t \to \pm \infty} \exp ( t \nabla H) \cdot x \in F_{j}\} \]
is the same as $M_j^\pm$.
We note that
\[ M^{\T} = \{x: d H(x) = 0\}\]
so that $F_j$ are the components of the critical point set. The sink corresponds to the minimum set of $H$. In \cite{RS} it is assumed
that one of the components of $M^{\T}$ is a hypersurface, and this
hypersurface is necessarily the minimum set of $H$.

Above we defined the open dense set $M^E_{{\rm max}}$ of points
whose forward or backward gradient trajectories intersect $H^{-1}(E)$. Its complement
consists of the stable/unstable manifolds of critical points other than 
the minimum/maximum. These gradient trajectories can get hung up
at the other critical points and not make it to $H^{-1}(E)$.


We will also identify the Lie algebra $\gfrak$ of $\T$   with $ \R$, such that $ - 2 \pi \pa_\theta \in \gfrak \mapsto 1 \in \Z$. Let $H$ be the corresponding Hamiltonian for $ \xi_H = - 2 \pi \pa_\theta$. $H$ is determined only up to an additive  constant.  We fix the indeterminacy in $H$  by defining $H = 0$ on its minimum set,  the source $M_1^+$. 

  \begin{rem} (Remark on periods) By definition,
the vector field $\partial_{\theta}$ of the action $e^{i \theta} z$ has period
$2 \pi$, so the above convention makes the period of $\xi_H$ equal to $1$.
\end{rem}

\subsection{The image $H(M)$}

We normalized $H$ so that the minimum of $H$ is zero. The question
then arises what is the maximum value of $H$ or equivalently what is
the interval $H(M)$. It must be a ``lattice polytope'', i.e. an interval which
integer endpoints. Thus, the maximum of $H$ must be an integer.

 \begin{lem} \label{HMLEM} Let $z \in M_{{\rm max}}$ and let $\ocal_z  \simeq \CP^1$ be the compactification 
 of $\C^* z$.  Let $\omega(\ocal_z)= \int_{\ocal_z} \omega$. Then 
 \begin{itemize}
 
 \item  $\omega(\ocal_z)$ is a positive integer and is constant in $z$ for $z \in M_{{\rm max}}$. \bigskip
 
 \item $\max H  =  \omega(\ocal_z)$; \bigskip

 \end{itemize}
 
 \end{lem}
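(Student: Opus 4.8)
The plan is to exploit the fact that for $z \in M_{\max}$ the compactified orbit $\ocal_z \simeq \CP^1$ carries a Hamiltonian $S^1$-action — namely the restriction of the $\T$-action, whose moment map is $H|_{\ocal_z}$ — with exactly two fixed points, the source $p_0 = \lim_{t\to 0} t\cdot z \in M_1^+$ and the sink $p_\infty = \lim_{t\to\infty} t\cdot z \in M_1^-$. First I would invoke the normalization $H = 0$ on $M_1^+$, so $H(p_0) = 0$, and observe that $H(p_\infty) = \max_{\ocal_z} H$. The one-dimensional Duistermaat-Heckman / Archimedes computation on $\CP^1$ then gives $\omega(\ocal_z) = \int_{\ocal_z}\omega = H(p_\infty) - H(p_0) = H(p_\infty)$; concretely, in an $S^1$-invariant affine coordinate $w$ on $\ocal_z \setminus \{p_\infty\}$ the symplectic form is $d(H|_{\ocal_z}) \wedge d\theta / (2\pi) \cdot(\text{something})$ — more cleanly, $\frac{1}{2\pi}\int_{\ocal_z}\omega$ equals the length of the moment interval $H(\ocal_z)$ divided by $2\pi$ times the period, and with our period-$1$ normalization of $\xi_H$ this is exactly $H(p_\infty)$. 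This already yields the second bullet once the first is known, since $\max H$ is attained on the sink $M_1^-$ (the maximum set of $H$), which $\ocal_z$ meets at $p_\infty$.

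Next I would prove integrality of $\omega(\ocal_z)$. Since $\omega = c_1(L)$ and $\ocal_z \simeq \CP^1$ is an algebraic curve in $M$, the number $\int_{\ocal_z} c_1(L) = \deg(L|_{\ocal_z})$ is the degree of a line bundle on $\CP^1$, hence an integer; positivity of $L$ forces it to be a \emph{positive} integer (the restriction of an ample bundle to a curve has positive degree). This is the cleanest route and uses only that $(L,h)$ is a positive line bundle with $\omega = c_1(L)$, as set up in \S\ref{setup}.

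For constancy of $\omega(\ocal_z)$ in $z \in M_{\max}$: the assignment $z \mapsto [\ocal_z] \in H_2(M;\Z)$ is locally constant on $M_{\max}$ because nearby generic orbits are homologous — the closures $\ocal_z$ sweep out a family of curves over the connected base $M_{\max}/\C^*$ (a connected complex manifold, being a Zariski-open quotient), so all the $[\ocal_z]$ lie in a single homology class, and pairing with $c_1(L)$ gives the same integer. Alternatively and more elementarily, by the first part $\omega(\ocal_z) = H(p_\infty(z)) = \max H$, and the right-hand side manifestly does not depend on $z$; this sidesteps any homology argument entirely and is probably the argument to present.

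The main obstacle is pinning down the normalization constants in the Archimedes computation — i.e., verifying that with the conventions of the paper ($\xi_H = -2\pi\partial_\theta$ of period $1$, $\pi^*\omega = \frac{1}{2\pi}d\alpha$, $\omega = \frac{i}{2\pi}\ddbar\varphi$) one really gets $\int_{\ocal_z}\omega = H(p_\infty) - H(p_0)$ with coefficient exactly $1$ and no stray factor of $2\pi$. I would do this by working on $X_h|_{\ocal_z}$, a circle bundle over $\CP^1$, writing $\alpha = d\theta + \Re(i\dbar\varphi)$ as in \eqref{alphadef} and integrating $d\alpha = 2\pi\,\pi^*\omega$ over the disc bundle, comparing the fiber rotation (Reeb, period $2\pi$) with the lifted $\hat\xi_H$; the period-$1$ normalization of $\xi_H$ is exactly what makes the answer come out as a clean integer rather than $2\pi$ times one. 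Everything else is bookkeeping with the Bialynicki-Birula source/sink structure already recalled in \S\ref{BBSECT}.
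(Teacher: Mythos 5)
Your proposal is correct and follows essentially the same route as the paper: integrality comes from restricting the polarization $L$ to the curve $\ocal_z\simeq\CP^1$ (so $\omega|_{\ocal_z}\in H^2(\ocal_z,\Z)$, i.e.\ the degree of a line bundle on a curve), and the identity $\max H=\omega(\ocal_z)$ comes from writing $\omega|_{\ocal_z}=(2\pi)^{-1}dH\wedge d\theta$ and integrating over the moment interval, which is exactly your Archimedes/Duistermaat--Heckman computation with the period-$1$ normalization. Your extra remarks on constancy (which then follows from $\omega(\ocal_z)=\max H$) and on positivity via ampleness are consistent with, and slightly more explicit than, the paper's argument.
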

 
 \begin{proof} For each $z \in M_{{\rm max}}$ we obtain a polarized \kahler
 $\CP^1$ by $(O_z, L |_{O_z}, \omega |_{O_z})$ and it must be the case
 that $\omega |_{O_z} \in H^2(O_z, \Z)$. This proves the first statement.  
 We then restrict $H : O_z \to \R$. It generates the $S^1$ action restricted to $O_z$.
 Hence $\omega |_{O_z} = (2\pi)^{-1} d H \wedge d \theta$. If $\omega(O_z) =  M$, 
 then 
 $ M = \int_{O_z} (2\pi)^{-1} d H \wedge d \theta =  \int_0^{H_{{\rm max}} }d H =
 H_{\max}$, or $H_{{\rm max}} = M$.
 
 \end{proof}
\subsection{\label{KPSECT}The Hamiltonian and the $\T$-invariant \kahler potentials}
Following \S\ref{EQBKintro}, 
for any $w = e^{\rho + i\theta}  \in \C^*$, denote the $\C^*$ action on $M$ by $z \mapsto e^{\rho + i \theta} \cdot z$. If we choose a local slice $S$ of the $\C^*$ action (necessarily a symplectic manifold),
then we may define {\it slice-orbit} coordinates $(\rho, \theta, y)$ 
by letting $y$ be coordinates on the slice and identifying \begin{equation} \label{SO} e^{i \theta + \rho} y = z. \end{equation} For instance, if we choose a slice $S_E$ of the  $\T$  action on $H^{-1}(E)$ then we may use $S_E \times \C^*$ to give local coordinates
on a neighborhood of $S_E$.  Also, $H^{-1}(E)$ is a slice of the gradient
flow or $\R_+$ action on $M_{{\rm max}}^E$ and we use the coordinates
$(\rho, z_E) \in \R \times H^{-1}(E)$ as well.

As in the introduction, for  $z \in  M_{{\rm max}}^E $, we define  \begin{equation} \label{tauEdef} \tau_E(z) \in \R:= \; {\rm the\; unique\; time\; s.t. }\;
  z= e^{\tau_E(z)}\cdot z_E, \;\;\; z_E \in  H\inv(E). \end{equation}

As in \S \ref{EQBKintro}, we denote the  two global vector fields $\pa_\rho, \pa_\theta$ (not be confused with the Reeb flow $R$ on the circle bundle), such that
\be \label{df:pa_rho}
 \pa_\theta f(z) := \frac{d}{d\theta}|_{\theta=0} f(e^{i \theta} \cdot z), \quad \pa_\rho f(z) := \frac{d}{d\rho}|_{\rho=0} f(e^{\rho} \cdot z) , \qquad \forall f \in C^\infty(M) 
 \ee
Since the $\C^*$ action is holomorphic, we have $\pa_\theta = J \pa_\rho$.

 On any simply connected open set we may define a local $\T$-invariant \kahler potential
 $\varphi$ for $\omega$, so that $\omega = \frac{i}{2\pi} \ddbar \varphi$. We mainly use the
 properties of $\varphi$ restricted to a $\C^*$ orbit $\ocal$. Our choice of coordinates is such that  on $\ocal$, \begin{equation} \label{leafphi} \iota_{\ocal}^* \omega =   \frac{i}{2\pi} \ddbar \iota_{\ocal}^* \varphi = \frac{1}{4\pi} \partial^2_{\rho} \; \varphi \;\; d \rho \wedge d \theta . \end{equation}

Recall that  the gradient vector field $\nabla H
$ is related to  the Hamiltonian vector field $\xi_H$, for any $Y \in Vect(M)$,
\[ dH(Y) = \omega(\xi_H, Y), \quad dH(Y) = g(Y, \nabla H), \quad g(X, Y) = \omega(X, JY) \]
hence $\nabla H = J \xi_H = -2\pi J \pa_\theta = 2 \pi \pa_\rho$. Thus the limit point of downward gradient flow $\nabla H$ is the same as $\lim_{\rho \to -\infty} e^{\rho} \cdot z = z_\infty \in M_1^+$.
   
     The following lemma relates $H$ with the local \Kahler potential. 
\bl \label{lm:ham}
Fix any $z \in M_{max}$. Then 
\[ H(z) = \frac{1}{2} \pa_\rho \varphi(z) \]
\el
\bpf

As in \cite{GS} (5.5) we define a $\T$-invariant potential using  a $\T$-invariant holomorphic section $s \in H_{\T}^0(M, L)$ in the sense of
Lemma \ref{lm:twoaction}, i.e. so that
$\hat{H} s = 0. $
Then,
\[ 0 = \hat{H} s = \frac{i}{2\pi} \nabla_{\xi_H} s +  H s = \frac{i}{2\pi}  \lan A, \xi_H \ran s +  H s = \frac{i}{2\pi} \lan -\pa \varphi, -2\pi \pa_\theta \ran s +   H s \]
where we used the Chern connection 1-form with respect to the basis frame $s$ is given by $A = -\pa \varphi$, and our convention of $\xi_H = -2\pi \pa_\theta$ (see above). Hence 
\be H = - i \lan \pa_\theta, \d \varphi \ran = \lan \pa_\theta, \frac{i}{2}(\dbar - \d) \varphi \ran = \lan \pa_\theta, \frac{-J}{2}(\d + \dbar) \varphi \ran = \lan J \pa_\theta,\frac{-1}{2}(d\varphi) \ran  = \frac{1}{2} \pa_\rho \varphi, \label{eq:HH}
\ee
This definition is unambiguous because
 any two $\T$-invariant holomorphic sections give the same Hamiltonian $\half \pa_\rho \varphi(z)$. Indeed, let $s_1, s_2 \in H^0(M, L)$ be two $\T$-invariant (hence $\C^*$ invariant)  holomorphic sections.  Then $s_1 = f s_2$ for some $\C^*$-invariant meromorphic function $f$.  Then  $\pa_\rho f = \pa_\theta f = 0$, so $\varphi_1(z) = - \log \|s_1\|^2 = - \log|f|^2 + \varphi_2(z)$, and $\pa_\rho\varphi_1(z) = \pa_\rho(- \log|f|^2 + \varphi_2(z)) = \pa_\rho \varphi_2(z)$. 

\epf

\subsection{\label{bphi}The second derivative of $\varphi$ and the action integral $b_E$}

We now consider the relation of $\partial_{\rho}^2 \varphi$ and   $b_E(z)$  \eqref{bE}. Let $\ocal(z)$ denote the $\C^*$ orbit of $z$, and let $\ocal_\R(z)$ denote the gradient trajectory of $z$. If $\ocal_\R(z) \cap H^{-1}(E) \neq \emptyset$, then they intersect at the unique point  $z_E$ 
\eqref{tauEdef}.
\bl \label{bELEM}
If the $\C^*$ orbit of $z$ intersects  $H^{-1}(E)$, let $z = e^{\tau_E(z)} \cdot z_E$ where $\tau_E(z) \in \R$ and $z_E \in H^{-1}(E)$, then 
\begin{equation} \label{bEFORM} b_E(z) = \varphi(z) - \varphi(z_E) - \tau_E(z) \pa_\rho \varphi(z_E). \end{equation}
\[ \pa_\rho b_E(z) = \pa_\rho \varphi(z) - \pa_\rho \varphi(z_E), \quad  \pa_\rho^2 b_E(z) = \pa_\rho^2 \varphi(z) \]
Hence, $b_E(e^\rho \cdot z_E)$ is a strictly convex function in $\rho$, with minimum at $\rho = 0$ and $b_E(z_E)=0$. 
 In particular, for $\rho>0$ (resp. $\rho < 0$), $b_E(e^\rho \cdot z_E)$ is strictly increasing (resp. decreasing) in $\rho$, or equivalently $\tau_E(z)$ along a $\C^* $ orbit  for $z \in \fcal_E$ resp. $z \in \acal_E$.
\el

\begin{proof}
By Lemma \ref{lm:ham}, we have $H(z) = \half \pa_\rho \varphi(z)$, hence $2 E = \pa_\rho \varphi(z_E)$. Hence from \eqref{bE}, we have
\bea
b_E(z) &=& 
-2 E \tau_E(z)+ \int_0^{2 \tau_E(z)}\left[ H(e^{-\sigma/2}\cdot z)\right] \cdot d\sigma
= -2 E \tau_E(z)+ 2 \int_0^{ \tau_E(z)} \left[ H(e^{\sigma}\cdot z_E)\right] \cdot d\sigma \\
& = & -\tau_E(z) \pa_\rho\varphi(z_E) + \int_0^{ \tau_E(z)}\left[ \pa_\rho \varphi(e^{\sigma}\cdot z)\right] \cdot d\sigma 
= \varphi(z) - \varphi(z_E) - \tau_E(z) \pa_\rho \varphi(z_E).
\eea
The other two identities follow from $\pa_\rho \tau_E(z) = 1$ and $\pa_\rho \varphi(z_E) = 0$.

From Lemma \ref{lm:ham} and the fact that $\nabla H = 2\pi \pa_\rho$ (see \S \ref{KPSECT}), we get

\be \label{prho2}  { \pi}\partial_{\rho}^2  \varphi =  |\nabla H|^2 = |\xi_H|^2.  \ee
A closely related formula is that \begin{equation} \label{HE} H(e^\sigma z_0) - H(z_0) = \int_0^{\sigma} \frac{d}{ds} H(e^{s} z_0) ds = \int_0^{\sigma} g( \nabla H(e^s(z_0)) , \frac{d}{ds} (e^s z_0)) ds = \frac{1}{2\pi} \int_0^{\sigma} |\nabla H(e^s(z_0))|^2 ds. \end{equation}
hence for $z = e^{\tau_E(z)} \cdot z_E$ where  $z_E \in H^{-1}(E)$, we have
\[ b_E(z) = \int_0^{\tau_E(z)} \int_0^\sigma \pi^{-1}  |\nabla H|^2 (e^s \cdot z_E) ds d\sigma \]
Monotonicity of $b_E$ in $\tau_E(z)$ is evident from the formula when $\tau_E(z)> 0$ i.e.
$z \in \fcal_E$, hence $b_E$ is monotone increasing in $\rho$.

\end{proof}

\subsection{The Leafwise Symplectic Potential and $b_E$}
In this section we relate $b_E(z)$ to  leaf-wise symplectic potentials. To define the symplectic potentials we use slice-orbit coordinates $(\theta, \rho, y)$ as in \eqref{SO} and   pull back the \Kahler potential \eqref{leafphi} to $\C^*$, by $\varphi(\rho, \theta; y) = \varphi(e^{\rho + i \theta} \cdot y) $. Since the \Kahler potential is $\T$ invariant,  $\varphi(\rho, \theta; y)$ is $\theta$ independent and is convex in $\rho$, and will be denoted by $\varphi(\rho ; y)$ relative to a slice $S_E \subset H^{-1}(E)$. Note that
$\varphi(\rho; e^{i \theta} y) = \varphi(\rho; y)$ and so $\varphi(\rho, z_E)$
is defined for all $z_E \in H^{-1}(E)$.

The leafwise symplectic potential is defined to be the Legendre transformation of $\varphi(\rho; y)$, 
\[ u(I; y) = \sup_{\rho \in \R} (I \rho - \varphi(\rho; y)) \]
Since $\varphi(\rho;y)$ is a smooth convex function in $\rho$, we have
\be \label{LegD1}
 u(I; y) = I \rho(I; y) - \varphi(\rho(I;y);y), \quad \text{ where $\rho(I;y)$ is s.t. } \; I = \pa_\rho \varphi(\rho(I; y) ; y). \ee
The Legendre transformation is an involution,
\be \label{INVOL} \varphi(\rho; y) = \sup_{I \in \R} (\rho I - u(I; y) ) = \rho I (\rho; y) - u(I( \rho;y);y), \quad \text{ where $I(\rho;y)$ is s.t. } \rho = \pa_I u( I(\rho;y);y) \ee
Also their second derivatives are related by
\be \label{LegD2}
\pa_I^2 u(I;y) = \pa_I \rho(E;y) = \frac{1}{\pa_\rho I(\rho;y)} =\frac{1}{\pa_\rho^2 \varphi(\rho;y)} \ee
where $I=I(\rho;y)$ and $\rho=\rho(I;y)$. 
 We use the notation $I$ as in  the ``action-variable' dual to the angle
 variable $\theta$;
 \eqref{LegD1} implies that $I/2$ lies in the range of $H$ (see Lemma \eqref{lm:sympbE}).

The following Lemma relates $b_E(z)$ with the symplectic potential, and can be easily verified using Lemma \ref{bELEM}. 
\bl
Let $z_E \in H^{-1}(E)$ and use gradient flow-coordinates 
$z = e^{\rho } \cdot z_E$. Then
\be \label{b1}  b_{E}(z)  = - u( 2H(z), z_E) - \varphi(z_E) + 2(H(z) - E) \tau_E(z) ,\ee

\el

Using the symplectic potential, one can easily derive the dependence of $b_E(z)$ in terms of $E$ for fixed $z$. 
\bl
\label{lm:sympbE}

Fix $z \in M$ and $E \in H(M)$, such that the $\R^*$ orbit of $z$ intersects $H^{-1}(E)$ at $z_E$, and let $\tau(z,E)= \tau_E(z) \in \R$ be such that $e^{\tau_E(z)} \cdot z_E = z$. Then
$b(z,E) = b_E(z)$ can be written as
\be \label{firstline} b(z, E) = \varphi(z) + u(2E; z)\ee
and we have the following properties
\be \pa_E b(z,E)  = - 2 \tau(z, E), \quad \pa_E^2 b(z,E) = \frac{4}{\pa_\rho^2 \varphi(z_E)}. \label{e:pebe} \ee
Hence $b(z,E)$ is a strictly convex function in $E$ with minimum being $0$ at $E = H(z)$. 
\el

\bpf
First we claim that $\rho(2E; z) = -\tau(z, E)$. Indeed by the definition of $\rho(2E;z)$ in \eqref{LegD1}, we have 
\[ 2E = \pa_\rho \varphi(\rho(2E;z);z) = \pa_\rho \varphi(e^{\rho(2E;z)} \cdot z) = 2 H( e^{\rho(2E;z)} \cdot z)\]
and $\tau(z,E)$ by definition satisfies $E = H(z_E) = H(e^{-\tau(z,E)} \cdot z)$, hence $\rho(2E;z) = -\tau(z,E)$. 
Using \eqref{LegD1} we have
\[ u(2E; z) = 2E \rho(2E;z) - \varphi(\rho(2E;z);z) = 2E (-\tau(z,E)) - \varphi(e^{-\tau(z,E)} \cdot z) = - 2 \tau(z,E) E - \varphi(z_E) \] 
Combined  with  \eqref{bEFORM}, this proves \eqref{firstline}. Next, using $\rho = \pa_I u( I(\rho;y);y)$ from \eqref{INVOL}, we have
\[ \pa_E( b(z,E)) = 2 \pa_I u(I;z)|_{I=2E} = 2 \rho(2E; z) = - 2 \tau(z, E), \]
and using \eqref{LegD2} we have
\[ \pa_E^2 (b(z,E)) = 4\pa_I^2 u(2E;z) = \frac{4}{\pa_\rho^2 \varphi(\rho(2E;z); z)} = \frac{4}{\pa_\rho^2 \varphi(z_E)}.  \]
\epf

\subsection{Periodicity of the lifted flow}

We can now prove the periodicity statement in \S \ref{LIFT}. Recall that the contact vector field is $\hat{\xi}_H = \xi^h_H - 2\pi R$. 

\begin{lem}\label{PERIODIC} The lifted flow $\exp t \hat{\xi}_H$ is $1$-periodic, or equivalently, $U_k(\theta)$ is $2 \pi$-periodic.\end{lem}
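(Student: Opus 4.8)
The claim is that the flow $\exp t\hat\xi_H$ on the circle bundle $X_h$ is $1$-periodic, equivalently that $U_k(\theta)=e^{ik\theta\hat H_k}$ is $2\pi$-periodic on $H^0(M,L^k)$. The plan is to compare the lifted flow $\exp t\hat\xi_H$ with the flow of $\xi_H^h-2\pi R$ projected to $M$, where it is just the given periodic Hamiltonian $\T$-action $e^{2\pi it}z$. Since $\pi_*\hat\xi_H=\xi_H$ and the Hamiltonian flow $e^{2\pi i t}$ on $M$ has period $1$ by hypothesis on the $\T$-action, the flow $\Phi_t:=\exp t\hat\xi_H$ on $X_h$ covers a loop when $t$ runs over $[0,1]$; hence $\Phi_1$ maps each fiber $X_{h,z}$ to itself. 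As $\Phi_1$ is a contact transformation commuting with the Reeb flow $R$ (because $\hat\xi_H$ and $R$ commute — both are lifts of commuting flows, and one checks $[\hat\xi_H,R]=0$ directly from $\hat\xi_H=\xi_H^h-2\pi HR$ and $\lcal_R\alpha=0$, $\lcal_R(\pi^*\text{anything})=0$), its restriction to each fiber circle is a rotation $r_{c(z)}$ by some angle $c(z)$ depending continuously (indeed smoothly) on $z$. The task reduces to showing $c(z)\equiv 0$.

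First I would show $c(z)$ is constant in $z$. Since $M_{\max}$ is open and dense and $\fcal_E,\acal_E$ type strata are connected, it suffices to compute $c(z)$ along a $\C^*$-orbit closure $\ocal_z\simeq\CP^1$ for $z\in M_{\max}$, using continuity to extend to all of $M$; constancy on $M_{\max}$ follows because the structure is locally a product $S_E\times\C^*$ and $c$ is locally constant on the connected base. Alternatively, $c(z)$ is the holonomy of the connection $\alpha$ around the loop $t\mapsto e^{2\pi it}z$ in $M$, plus a correction from the $-2\pi HR$ term; by Stokes' theorem this holonomy equals $2\pi\int_D\pi^*\omega$ over a disc $D$ bounded by the orbit loop. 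Second, I would pin down the value: restrict everything to a gradient sphere $\ocal_z$. On $\ocal_z$ the Hamiltonian $H$ runs from $0$ (at the source fixed point) to $\omega(\ocal_z)=\max H=:a$ (at the sink), by Lemma \ref{HMLEM}. The lifted flow over the fixed points: at a fixed point $p\in M^\T$ we have $\xi_H^h(p)=0$, so $\hat\xi_H(p)=-2\pi H(p)R$, and the flow there is $r_{-2\pi H(p)t}$; at the source $H=0$ so the flow is trivial on that fiber, forcing $c=0$ at the source, hence (by constancy) $c\equiv 0$. Then automatically at the sink fixed point, where $H=a$, consistency of $\Phi_1=\mathrm{id}$ on the fiber forces $r_{-2\pi a}=\mathrm{id}$, i.e. $a\in\Z$ — which is exactly the integrality already established in Lemma \ref{HMLEM}, a reassuring cross-check rather than a new input.

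For the equivalence with $2\pi$-periodicity of $U_k(\theta)$: by Corollary \ref{lm:twoaction}, $\widehat{U_k(\theta)s}=\hat s\circ\exp(\theta\hat\xi_H/2\pi)$, so $U_k(2\pi)s$ corresponds to $\hat s\circ\exp(\hat\xi_H)=\hat s\circ\Phi_1=\hat s$ once $\Phi_1=\mathrm{id}$; conversely if $U_k$ is $2\pi$-periodic for all $k$ then $\Phi_1$ acts trivially on all equivariant functions of every weight $k$, forcing $\Phi_1=\mathrm{id}$.

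\medskip\noindent\textbf{Main obstacle.} The genuinely substantive step is establishing that the fiber-rotation angle $c(z)$ of $\Phi_1$ is globally constant and equal to zero, rather than merely an integer multiple of $2\pi$ that could a priori jump; this is where one must use that $\xi_H^h$ is the \emph{horizontal} lift with respect to the connection $\alpha$ whose curvature is $2\pi\omega$ with $\omega=c_1(L)$ integral, so that the holonomy around the contractible-in-$M$ orbit loop is controlled by $\int_D\pi^*\omega$ and vanishes mod the integrality, combined with the vanishing at the source fixed point to kill the integer ambiguity. Everything else is bookkeeping with the identities $\pi_*\hat\xi_H=\xi_H$, $\lcal_{\hat\xi_H}\alpha=0$, and $\lcal_R\alpha=0$ established earlier in the excerpt.
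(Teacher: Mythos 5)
Your overall strategy --- show that the time-$1$ map $\Phi_1=\exp\hat\xi_H$ is a fiberwise rotation by a \emph{constant} angle $c$, then evaluate $c$ over the minimum set where $\hat\xi_H$ vanishes identically --- is genuinely different from the paper's, and it is viable, but the pivotal constancy step is not actually established. The justification you give (``the structure is locally a product $S_E\times\C^*$ and $c$ is locally constant on the connected base'') is not an argument: a smooth circle-valued function on a connected manifold need not be locally constant, and nothing in the product structure forces it to be. Your alternative via Stokes identifies the right object but is never evaluated, and the summary in your final paragraph is incorrect: the holonomy of the \emph{horizontal} lift around the orbit loop through $z$ equals $2\pi\int_D\omega = 2\pi H(z) \bmod 2\pi\Z$ (take $D$ to be the cap of the gradient sphere between the source and the orbit, where $\omega|_{\ocal_z}=(2\pi)^{-1}dH\wedge d\theta$), which does \emph{not} vanish mod integrality. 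It is precisely this $z$-dependent angle that the vertical term $-2\pi H R$ is built to cancel; that cancellation is the content of the lemma, and it is what the paper proves directly by splitting $\exp\hat\xi_H$ into the horizontal flow, whose return angle is computed as $\theta_\gamma = i\int_\gamma A = 2\pi H(z)$ using $A=-\d\varphi$ and $H=-i\lan\pa_\theta,\d\varphi\ran$, followed by the Reeb rotation by $-2\pi H(z)$.

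Your route can be repaired without redoing that computation, and with ingredients you already cite: since $\lcal_{\hat\xi_H}\alpha=0$, we have $\Phi_1^*\alpha=\alpha$; writing $\alpha=d\theta+\pi^*\beta$ locally and $\Phi_1(z,\theta)=(z,\theta+c(z))$ gives $\Phi_1^*\alpha=\alpha+\pi^*dc$, hence $dc=0$ and $c$ is constant on the connected manifold $M$. Combined with your correct observation that $\hat\xi_H\equiv 0$ on the fibers over the source (where both $\xi_H$ and $H$ vanish), this yields $c\equiv 0$, and your reduction of the $U_k$-periodicity statement via Corollary \ref{lm:twoaction} is fine. As written, however, the proposal has a genuine gap at its key step: constancy of $c$ is asserted with an invalid justification, and the fallback holonomy claim is false.
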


\begin{proof}
The equivalence follows from Lemma \ref{lm:twoaction}.  By our choice of generator, the common period of all $\xi_H$-orbits equals $1$, hence flow by $\xi_H^h$ return to the same fiber.  Since on the circle bundle, the horizontal vector field $\xi_H^h$ and vertical Reeb vector field $R$ commute, and  $H(z)$ is constant along the $\xi_H$ orbit, we may first flow by $\xi_H^h$ for time $1$, then by $-2\pi  H R$ for time $1$.
Let $\theta_\gamma$ be defined such that $\exp(\xi_H^h) (z,\lambda) = (z, e^{i \theta_\gamma} \lambda)$. 
Then, 
\[ \theta_\gamma = i \int_\gamma A = i \int_0^{2\pi} \lan -\pa \varphi, \pa_\theta \ran d \theta = 2\pi H \] 
where we used identities from \eqref{eq:HH}. Hence flowing by $-2\pi H R$ sends $(z, \lambda) \mapsto (z, e^{-i 2\pi H} \lambda) = (z, e^{-i \theta_\gamma}\lambda)$, the two $e^{i\theta_\gamma}$ factor cancels, hence $\hat{\xi}_H$ has period $1$.  
  \end{proof}

\subsection{\label{EXSECT} Examples}

To illustrate the variety of $S^1$-\kahler manifolds, we first start with
model linear cases and then proceed to other types of examples.\bigskip

\noindent{\bf (0): Linear actions on $\C^m$} On the non-compact \kahler manifold $\C^m$ with
Euclidean metric the linear $S^1$ actions have the form,
$$e^{i \theta} \cdot (Z_1, \dots, Z_m) = (e^{i b_1} Z_1, \dots, e^{i b_m} Z_m), 
\;\;\; b_j \in \Z, $$
with Hamiltonians $H = \sum_j b_j |Z_j|^2$. Extreme cases are where
all $b_m = 0$ except $b_1 = 1$, in which case the fixed point set is
a hypersurface $Z_1 = 0$, and the isotropic Harmonic oscillator
with all $b_j = 1$ and Hamiltonian $|Z|^2$ with fixed point set $\{0\}$.
\bigskip

\noindent{\bf (i)
Standard $S^1$ actions on $\CP^m$} They arise from subgroups $S^1 \subset SU(m + 1)$ of
the form
$$e^{i \theta} \cdot [Z_0, \dots, Z_m] = [e^{i b_0} Z_0, \dots, e^{i b_m} Z_m], 
\;\;\; b_j \in \Z. $$
With no loss of generality it is assumed that $b_0 = 0$.
When  $b_j \not= b_k \; {\rm when}\; j \not= k$, the action has $m + 1$ isolated fixed points,
$P_j = [0, \dots, 0, z_j, 0, \dots, 0]$. The weights at $P_j$ are $\{b_j - b_i\}_{j \not= i}$.
The Hamiltonian  is 
$$\mu_{\vec b}([Z_0: \cdots : Z_m]) = \frac{b_1 |Z_1|^2 + \cdots + b_m |Z_m|^2}{|Z|^2}. $$
\bigskip

\noindent{\bf (ii) Cubic hypersurface in $\CP^4$}

This example is taken from \cite{Ki2}.
Consider the cubic hypersurface  $X \subset \CP^4$, 
$$x^3 + y^3 + z^3 = u^2  v, $$
in $\CP^4 = \{[x,y,z,u,v]\}$ and let $\C^*$ act on $X$ via the action on $\CP^4,$
$$t \cdot [x, y, z, u, v] = [t^{-1} x, t^{-1} y, t^{-1} z, t^{-3} u, t^3 v]. $$

Then $X^{\T}$ has  three fixed point components, 
$$F_1 = \{[0,0,0,1,0]\},\;\;\; F_2 = \{[x,y,z,0,0]: x^3 + y^3 + z^3 = 0 \}, \;\; F_3 = \{[0,0,0,0,1]\},$$ 
of which two ($F_1, F_3$) are isolated fixed points and $F_2$ is a hypersurface in $X$, i.e. a curve. The point
$P = [0,0,0,0,1]$ is singular.

The corresponding stable sets $S_j = \{x \in X: \lim_{t \to 0} t \cdot x \in F_j\}$ are
$$\left\{ \begin{array}{l} S_1 = \{[x, y, z, u, v] \in X: u \not= 0\}, \\ \\
S_2 = \{[x, y, z, u, v] \in X: u = 0, (x,y,z) \not= 0\}, \\ \\
S_3 = \{[0,0,0,0,1]\}, \end{array} \right. $$
Here, $S_1 $ is Zariski open in $X$, $S_2$ is of codimension one, and $S_3 = F_3$ is a point.

The  Hamiltonian $H: X \to \R$
 is the restriction of the Hamiltonian for the $\T$ action on 
$\CP^m$ above.\bigskip

\noindent{\bf (iii) Ruled surfaces} \cite{HS}

Let $M_g$ be a Riemann surface of genus g, equipped with a constant curvature metric. Let
$L \to M$ be a holomorphic line bundle. $L$ carries a natural $\C^*$ action. Projectivize each line $L_z
\to \P L_z \simeq \CP^1$ to get  $\P L$. It still carries a $\C^*$ action. Examples of $S^1$-invariant \kahler metrics are the constant scalar
curvature metrics.


\section{\label{BSjIBGSECT} The \szego kernel and the Boutet de Monvel-\Sj \, parametrix}
This section is preparation for Theorem \ref{E} and the subsequent asymptotic results. The equivariant Bergman 
kernels $\Pi_{k, j}$ have two positive integer indices, indicating a lattice point in $\Z_+ \times \Z_+$\footnote{The $\T$ action in general has $\Z$ weights, since we have chosen $H$ such that $H(M) \geq 0$, the corresponding weight are in $\Z_+. $}. The asymptotics
in $k$ for a fixed energy level $E$ implicitly involve pairs $(j_n, k_n)$ of lattice points along a ``ray'' in the joint lattice.

It is convenient to lift the sections of $H^0(M, L^k)$, resp.  the equivariant kernels
$\Pi_{k,j}$,  as equivariant functions (resp. kernels) on the unit circle
bundle $X_h \to M$ associated to the Hermitian line bundle $(L^*, h^*)$, see \S \ref{sec:circle}.  This circle bundle carries a canonical $S^1$ action.
The Hamiltonian $\T$ action also lifts to $X_h$ and thus the two commuting circle actions define an $S^1 \times S^1$ action,
whose weights form the semi-lattice  of $\{(j,k) \in \Z_+ \times \Z_+\}$. This lifting and the approximation of energy levels
by  rays  in $ \Z_+ \times \Z_+$  is discussed in detail in \cite{STZ} for 
toric varieties, and the same discussion applies almost verbatim to $\T$ actions. We therefore summarize the key points
and refer to \cite{STZ} for further details.

\subsection{\label{XhSECT} The \szego kernel and the Bergman kernel}
We now discuss the analytic aspects of the lift  to the circle bundle $X_h$ and the disc bundle $D_h$ in   \S \ref{sec:circle} and  \S \ref{LIFT}.
We define the Hardy space
$\hcal^2(X_h)
\subset \lcal^2(X_h)$ of square-integrable CR functions on $X_h$, i.e.,
functions that are annihilated by the
Cauchy-Riemann operator $\dbar_b$
and are
$\lcal^2$ with respect to the inner product
\begin{equation}\label{unitary} \langle  F_1, F_2\rangle
=\int_X
F_1\overline{F_2}dV_X\,,\quad F_1,F_2\in\lcal^2(X)\,.\end{equation}
Equivalently, $\hcal^2(X)$
is the space of boundary values of holomorphic functions on $D$ that
are
in
$\lcal^2(X)$.  Here,  $X_h$ is given the contact  volume
form
\begin{equation}\label{dvx}dV_X= \frac{1}{m!}\frac{\alpha}{2\pi}\wedge
\left(\frac{d\alpha}{2\pi}\right)^m= \frac{\alpha}{2\pi}\wedge dV_M,\quad \text{ where } \quad d V_M = \frac{\omega^m}{m!}.\end{equation}

The $S^1$ action on $X$ commutes
with $\bar{\partial}_b$; hence $\hcal^2(X) = \bigoplus_{k
=0}^{\infty} \hcal^2_k(X)$ where $\hcal^2_k(X) =
\{ F \in \hcal^2(X): F(r_{\theta}x)
= e^{i
k \theta} F(x) \}$. As mentioned in \S \ref{LIFT}, a section $s_k$ of $L^k$ determines an equivariant
function
$\hat{s}_l$ on $L^*$ by the rule
$$\hat{s}_k(\lambda) = \left( \lambda^{\otimes k}, s_k(z)
\right)\,,\quad
\la\in L^*_z\,,\ z\in M\,,$$
where $\lambda^{\otimes k} = \lambda \otimes
\cdots\otimes
\lambda$. We henceforth
restrict
$\hat{s}$ to $X$ and then the equivariance property takes the form
$\hat s_k(r_\theta x) = e^{ik\theta} \hat s_k(x)$. The map $s\mapsto
\hat{s}$ is a unitary equivalence between $H^0(M, L^{ k})$ and
$\hcal^2_k(X)$. (This follows from (\ref{dvx})--(\ref{unitary}) and the fact
that
$\alpha= d\theta$ along the fibers of $\pi:X\to M$.)

We  define the (lifted) \szego kernel $\hat{\Pi}(x,y) $ to be the (Schwarz) kernel of the orthogonal projection
$\hat{\Pi}_k : \lcal^2(X)\rightarrow
\hcal^2(X)$. It is given by
\begin{equation} \hat{\Pi} F(x) = \int_X \hat{\Pi}(x,y) F(y) dV_X (y)\,,
\quad F\in\lcal^2(X)\,.
\label{PiNF}\end{equation}

The Fourier components   $\hPi_k: \lcal^2(X) \to \hcal^2_k(X)$ of the \szego projector can be extracted from $\hPi(x,y)$ by
\begin{equation} \label{FC}  \hPi_k(x,y) = \int_0^{2\pi} e^{-i k \theta} \hPi(r_{\theta} x, y) \frac{d\theta}{2\pi} \end{equation}
The \szego (or Bergman)\footnote{In the setting of line bundles, we use the terms interchangeably.} kernel $\Pi_k(z,w)$ for the orthogonal projection $\Pi_k: \lcal^2(M, L^k) \to H^0(M, L^k)$ can be obtained via the isometry of $H^0(M, L^k) \cong \hcal^2_k(X)$. 


In a local coordinate patch $U$ with a holomorphic frame $e_L \in \Gamma(U, L)$, we introduce two scalar kernels $K_k(z,w)$ and $B_k(z,w)$, with respect to the holomorphic frame and unitary frame: 
\[
\Pi_k(z,w) =: K_k(z,w) \cdot {e}_L^k(z) \ot \overline{{e}_L^k(w)} 
=: B_k(z,w) \cdot \frac{e_L^k(z)}{\|e^k_L(z)\|_h} \ot \overline{\frac{e_L^k(w)}{\|e^k_L(w)\|_h}} 
\]
The Bergman density function $\Pi_k(z)$  is the contraction of $\Pi_{k}(z,w)$ with the hermitian metric on the diagonal, 
\[\Pi_k(z): = B_k(z,z) (:= \Pi_{k}(z,z)), \]
where in the second equality we record a standard abuse of notation in which the diagonal of the
\szego kernel is identified with its contraction.

\subsection{Equivariant \szego kernels}
%

Let  $e_L$ is a local $\T$-invariant holomorphic frame and  we define equivariant Bergman kernel and densities, 
\be
\label{defeqK}\left\{ \begin{array}{l}
\Pi_{k,j} (z,w) = K_{k,j}(z,w) \cdot {e}^k_L(z) \ot \overline{{e}^k_L(w)} = B_{k,j}(z,w) \cdot \frac{e_L^k(z)}{\|e^k_L(z)\|_h} \ot \overline{\frac{e_L^k(w)}{\|e^k_L(w)\|_h}} , \\ \\   \Pi_{k,j}(z) = B_{k,j}(z,z). \end{array} \right. 
\ee

  Equivariant Bergman kernels are closely related to Bergman
kernels for the reductions of the level sets $H^{-1}(\frac{j}{k})$. 
For instance,  the space of  invariant sections
  \begin{equation} \label{INVAR} V_k(0): =  H^0_{\T}(M, L^k) = \{s \in H^0(M, L^k): e^{i \theta} s = s\}. \end{equation}
 is isomorphic in a canonical way to the space of holomorphic
  sections of the reduced line bundle $L_{\T}$ on the reduced space
  $M_E :  = H^{-1}(E)/S^1$, i.e.
  $V_k(0)  \simeq H^0(M_E, L_{\T}^k\}$, so that
   $\dim H^0_{\T}(M, L^k) = {\rm Vol}(H^{-1}(E)/S^1)\; k^{m-1} $.

\subsection{\label{BSjSECT} The Boutet de Monvel-\Sj  parametrix}
Near the diagonal in $X_h \times X_h$, there exists a parametrix due to  Boutet de Monvel-\Sj 
\cite{BSj} for the \szego kernel of the form,  \begin{equation} \label{SZEGOPIintroa}  \hat{\Pi}(x,y) =  \int_{\R^+} e^{ \sigma {\psi}(x,y)} \chi(z_x, z_y) s(x, y ,\sigma) d \sigma  + \hat{R}(x,y). \end{equation} Here,  $\chi(z_x, z_y) $ is a smooth cutoff supported
in a neighborhood of the diagonal of $M \times M$. $\psi(x,y)$ is defined as (up to $2\pi \Z i$ ambiguity)
\[ \psi(x,y) = -\log \lambda_x - \log \wb{\lambda_y} + \varphi(z_x, z_y) \]
where $x =  \lambda_x \cdot e_L^*(z_x) \in X_h$ for $z_x \in M, \lambda_x \in \C^*$,  similarly for $y$, with respect to  a local trivialization $e_L \in \Gamma(U, L)$. And $\varphi(z,w)$ is the almost analytic extension of $\varphi(z)$ (we abuse notation), that is
\[ \varphi(z,z) = \varphi(z), \dbar_z \varphi(z,w) = \d_w \varphi(z,w) = 0 \text{ to infinity order on } \Delta_M \subset M \times M. \]
On the co-circle bundle, we have $2\Re \log \lambda_x = \varphi(z_x)$ and $2\Re \log \lambda_y = \varphi(z_y)$, hence if we write $\theta_x = \arg \lambda_x, \theta_y = \arg \lambda_y$, we have 
\[ \psi(x,y) = -i \theta_x + i \theta_y + \varphi(z_x, z_y) - \varphi(z_x)/2 - \varphi(z_y)/2. \]

The real part of $\psi$ proportional to the Calabi-Diastasis, 
\[ \Re \psi(x,y) = -\half D(z_x,z_y), \]
 where
\begin{equation} \label{CD} 
D(z,w) : =  - \varphi(w,z) - \varphi(z,w) + \varphi(z) + \varphi(w), 
\end{equation}
is defined near the diagonal of $M \times M$, and is positive and only vanishes when $z=w$. 
The amplitude is a classical symbol,     \begin{equation} \label{s} s \sim \sum_{n
= 0}^{\infty} \sigma^{m - n} s_n(x,y). \end{equation}  Finally, the remainder term $\hat{R}(x,y)$ is $C^{\infty}$.

From the parametrix for $\hPi$ one can derive  semi-classical parametrices for the Fourier
components  and thus  for the semi-classical \szego kernels  on 
$H^0(M, L^k)$. 
If we substitute the first term of  \eqref{SZEGOPIintroa} into \eqref{FC}, one obtains
the oscillatory integral,
 \begin{equation} \label{SZEGOPIintro}  \hat{\Pi}_{k}(x,y) \sim  \int_{\R^+} 
\int_0^{2 \pi} e^{ \sigma \psi(x, r_{\theta} y)} e^{i k \theta} \chi(z_x, z_y) s(x, r_{\theta} y ,\sigma) d \theta d \sigma, \end{equation} 
Changing variables $\sigma \to k \sigma$ and eliminating  the $d \theta d \sigma$ integral by the stationary phase method gives,
 at least formally, the off-diagonal
expansion for the full \szego kernel on $M$,
\begin{equation} \label{BBSjeq} \quad K_k(z,w) = e^{k \varphi (z, {w})} k^m \;(1 + O(k^{-1}) ). \end{equation}
A direct construction of the parametrix is  given in \cite{BBSj} (where \eqref{BBSjeq} is stated as (2.2)).

We only use the full Bergman kernel and the the parametrices \eqref{BBSjeq}-\eqref{SZEGOPIintro} in Sections \ref{THESECT} and \ref{FIRSTBSjAPP},  in the proof of the Localization Lemma, and in Section 
\ref{INTERSECT}.

\section{Equivariant Bergman kernels: Proofs of Theorem \ref{E} and Theorem \ref{EQUIVINT}}
In this section, we prove that the equivariant Bergman kernel $\Pi_{k,j}(z,z)$  forms a Gaussian bump around the hypersurface $H^{-1}(j/k)$, with decay width $\sim 1/\sqrt{k}$.

\bl
\label{lm:scalinglaw}
For all $\alpha, \beta \in \C$, we have 
\[ K_{k,j} (e^\alpha \cdot z,e^\beta \cdot  w)  = e^{j(\alpha + \bar{\beta})} K_{k,j}(z,w) \]
and
\[ B_{k,j}(e^\alpha \cdot z,e^\beta \cdot  w)  = e^{j(\alpha + \bar{\beta}) - k(\varphi(e^\alpha \cdot z)-\varphi(z))/2 - k(\varphi(e^\beta \cdot w)-\varphi(w))/2 } B_{k,j}(z,w). \]
In particular, if we set $\beta = -\bar \alpha$, we have
\[ K_{k,j} (z,w) = K_{k,j}(e^{\alpha} \cdot z, e^{-\bar \alpha} \cdot w) . \]
\el

\bpf
This is immediate from the definition of $K$ and $B$. 
\epf

\subsection{\label{THESECT} Proof of Theorem \ref{E}}

Theorem \ref{E} follows from the following two propositions. The first one establishes the decay property of $\Pi_{k,j}(z)$ away from the real hypersurface $H^{-1}(j/k)$. Recall the definition of $b_E(z)$ \eqref{bE}.
\bp
\label{pp:eqv-decay} 
Fix  $(k,j)$ and $z \in H^{-1}(E)$ where $E=j/k$. Then, for any $\alpha \in \R$, we have
\[  \Pi_{k, j} (e^{\alpha} \cdot z) = e^{- k b_E(e^\alpha z)}  \Pi_{k, j}(z)\]
\ep

\bpf
Using Lemma \ref{lm:scalinglaw} with $z=w$, $\alpha=\beta \in \R$, we have
\[ B_{k,j}(e^\alpha \cdot z,e^\alpha \cdot  z)  = e^{2 j \alpha - k(\varphi(e^\alpha \cdot z)-\varphi(z)) } B_{k,j}(z,z) \]
Now, write $j = k E = k H(z) = k \pa_\rho \varphi/2$, we have
\[ \Pi_{k,j}(e^\alpha \cdot z) = B_{k,j}(e^\alpha \cdot z,e^\alpha \cdot  z) = e^{-k (\varphi(e^\alpha \cdot z)-\varphi(z) - \alpha \pa_\rho \varphi(z))} B_{k,j}(z,z) = e^{-k b_E(z)}B_{k,j}(z,z) = e^{- k b_E(e^\alpha z)}  \Pi_{k, j}(z) \]  
\epf

 These are exact identities and do not involve parametrices. Next, we  express $K_{k,j}$ as a Fourier coefficient of $K_k$ with
respect to the Hamiltonian $S^1$ action and give a parametrix formula,

\begin{lem} \label{KkjBSj} For any $j: |j| \leq k$,
\bea 
K_{k,j}(z,z) & = & \int_0^{2\pi}  K_k(e^{i \theta}\cdot z,z) e^{-i j \theta} \frac{d \theta}{2 \pi} \\
&=& k^m\int_0^{2\pi} e^{k\varphi(e^{i\theta} z,z)}  e^{-i j \theta} \chi(e^{i\theta} z,z)(1 + O(1/k))\frac{d \theta}{2 \pi} 
\eea
where $\chi(z,w)$ is a cut-off function supported in a neighborhood $U$ of the diagonal of $M \times M$. 
\end{lem}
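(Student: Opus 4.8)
The plan is to derive the two displayed equalities in Lemma~\ref{KkjBSj} as straightforward consequences of the Fourier extraction formula for equivariant components together with the Boutet de Monvel--\Sjostrand parametrix \eqref{SZEGOPIintro}. First I would recall that lifting to the circle bundle $X_h$ converts the $\T$-equivariant decomposition into the Fourier decomposition in $\theta$; by the analogue of \eqref{FC} for the Hamiltonian action (using Corollary~\ref{lm:twoaction}, which says that the lifted Hamiltonian flow acts on $\h s_k$ by $\h s_k \circ \exp(\theta \h\xi_H/2\pi)$) one has
\[
\h\Pi_{k,j}(x,y) = \int_0^{2\pi} e^{-ij\theta}\, \h\Pi_k\!\left(\exp(\tfrac{\theta}{2\pi}\h\xi_H) x,\, y\right)\, \frac{d\theta}{2\pi}.
\]
Descending this identity back down to $M$ via the isometry $H^0(M,L^k)\cong \hcal^2_k(X)$ and reading off the scalar kernel with respect to the $\T$-invariant frame $e_L$ turns the lifted Hamiltonian flow into the plain rotation $z\mapsto e^{i\theta}\cdot z$ on $M$, giving the first line
\[
K_{k,j}(z,z) = \int_0^{2\pi} K_k(e^{i\theta}\cdot z,\, z)\, e^{-ij\theta}\, \frac{d\theta}{2\pi}.
\]
This is the step where one must be careful that the $\C^*$-invariance of the chosen frame $e_L$ is exactly what makes the transformation law clean (so that no extra $\varphi$-factors appear in $K$, as opposed to $B$), and that $|j|\le k$ guarantees the integral picks out a genuine Fourier mode of the section space.

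For the second line I would simply substitute the off-diagonal parametrix expansion \eqref{BBSjeq}, namely $K_k(z,w) = k^m e^{k\varphi(z,w)}(1+O(k^{-1}))$, valid near the diagonal, and insert the cutoff $\chi(z,w)$ supported in a neighborhood $U$ of $\Delta_M$. Setting $z$ fixed and $w=z$, $e^{i\theta}\cdot z$ stays in $U$ for $\theta$ in a neighborhood of $0$; away from that neighborhood $e^{i\theta}\cdot z$ and $z$ are separated, the true Bergman kernel $K_k(e^{i\theta}z,z)$ is $O(k^{-\infty})$ there (off-diagonal decay of the Bergman kernel), so cutting off the $\theta$-integral to $\supp\chi(e^{i\theta}z,z)$ costs only $O(k^{-\infty})$, which is absorbed into the stated error. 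This yields
\[
K_{k,j}(z,z) = k^m \int_0^{2\pi} e^{k\varphi(e^{i\theta}z,\,z)}\, e^{-ij\theta}\, \chi(e^{i\theta}z,z)\,(1+O(1/k))\, \frac{d\theta}{2\pi},
\]
which is the assertion.

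The main obstacle, such as it is, is bookkeeping rather than depth: one must justify interchanging the $\sigma$-integral of the parametrix \eqref{SZEGOPIintroa} with the $\theta$-integral and the rescaling $\sigma\mapsto k\sigma$, i.e. verify that the stationary-phase reduction producing \eqref{BBSjeq} is uniform enough that the $O(1/k)$ remainder survives being integrated against $e^{-ij\theta}$ and against the (compactly supported, but $k$-independent) cutoff $\chi$. Since $\varphi(e^{i\theta}z,z)$ has a nondegenerate critical structure in the remaining variable and the amplitude is a classical symbol \eqref{s}, this is standard; the only genuinely new input beyond \S\ref{BSjSECT} is the observation that conjugating the Bergman kernel by the Hamiltonian $S^1$ action amounts, at the level of the scalar kernel $K_k$ in a $\C^*$-invariant frame, to the substitution $z\mapsto e^{i\theta}z$ with no metric prefactor --- which is exactly the content of Lemma~\ref{lm:scalinglaw} with $\alpha=i\theta$, $\beta=0$. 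I would therefore cite Lemma~\ref{lm:scalinglaw} (or the frame-invariance directly) to legitimize the first equality and cite \eqref{BBSjeq} together with off-diagonal decay for the second.
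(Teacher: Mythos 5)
Your proposal is correct and follows essentially the same route as the paper, whose own proof is simply that the first line is evident (it is the Fourier extraction of the weight-$j$ component, justified exactly by the scaling law of Lemma \ref{lm:scalinglaw} in a $\C^*$-invariant frame) and the second line is the substitution of the parametrix \eqref{BBSjeq}. The extra care you take with the cutoff $\chi$ and the off-diagonal decay is a reasonable filling-in of details the paper leaves implicit, not a different argument.
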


\bpf The first line is evident and the second uses \eqref{BBSjeq}.
\epf

The next proposition studies the kernel $\Pi_{k,j_k}(z)$ when $z \in H^{-1}(E)$ and $j_k/k \to E$.  

\bp \label{pp:eqv-onshell}
Fix a regular value $E$ of $H: M \to \R$,  and a sequence $\{j_k\}$ such that $|\frac{j_k}{k} - E| < C/k$ for some positive constant $C$.  Then for any $z \in H^{-1}(E)$ with trivial stabilizer in the $\T$-action,  we have
\[ \Pi_{k,j_k}(z) =k^{m-1/2} \sqrt{\frac{2}{\pi \pa^2_\rho \varphi(z)}}(1 + O(1/k)).  \]
\ep
\bpf
Let $E_k = j_k/k$, and $z_k \in H^{-1}(E_k)$ $\rho_k \in \R$, such that $z = e^{\rho_k} z_k$. We have $|\rho_k|=O(1/k)$. Indeed, 
\[ C/k > |E_k - E| = \frac{1}{2} |\pa_\rho \varphi(z_k) - \pa_\rho \varphi(z)| =  \frac{1}{2}  |\int_0^{\rho_k} \pa_\rho^2 \varphi(e^s z) d s| > C'  |\rho_k| \] 
where we used the fact $\varphi$ is psh and $\T$-invariant, to get $\pa_\rho^2 \varphi$ strictly positive, hence $|\rho_k|=O(1/k)$.  Then using Proposition \ref{pp:eqv-decay}, we get
\be \Pi_{k,j_k}(z) = \Pi_{k,j_k}(z_k) e^{-k b_{E_k}(e^{\rho_k} z_k)} = \Pi_{k,j_k}(z_k) e^{-k O(\rho_k^2)}= \Pi_{k,j_k}(z_k) (1+O(1/k)) \label{e:znzk} \ee
Next, we evaluate $\Pi_{k,j_k}(z_k)$ using the parametrix of
Lemma \ref{KkjBSj}  and the stationary phase method.

Setting $j = j_k, z= z_k$ in Lemma \ref{KkjBSj}, we get
\begin{equation}  K_{k, j_k} (z_k, z_k) =  k^m \int_{-\pi}^{\pi} e^{  k ( \varphi(e^{i\theta} z_k,z_k)- i E_k \theta) }\chi(e^{i\theta} z_k,z_k)(1 + O(1/k))\frac{d \theta}{2 \pi}. \end{equation}
This is not quite a standard stationary phase integral because the phase 
\begin{equation} \label{psik} \Psi_k(i \theta): =  \varphi(e^{i\theta} z_k,z_k)- i E_k \theta,  \end{equation} depends on $k$. However, all aspects of the stationary 
phase expansion (see \cite{Ho}) extend with no essential change to  \eqref{psik}.

We claim that $\theta=0$ is a Morse critical point of \eqref{psik}. To see
this, we use $ H = - i \lan \pa_\theta, \d \varphi \ran$ from  \eqref{eq:HH}'s first equality. Thus, the first derivative of $\Psi_k(i\theta)$ at $\theta=0$ is
$$-i\partial_{\theta} \Psi_k(i \theta) |_{\theta = 0} = H(z_k) - E_k = 0. $$
To calculate the second derivative at $\theta = 0$
we rewrite
$ \varphi(e^{i \theta} z, z) =  \varphi(e^{i \theta/2} z, e^{-i \theta/2} z) $, using the $\T$ invariance of $\varphi$, then we
extend \eqref{psik} to a holomorphic function
\begin{equation} \label{PSIDEF} \Psi_k(\tau) = \varphi( e^{\tau/2} \cdot z_k, e^{\bar{\tau}/2} \cdot z_k) -  E_k \tau \end{equation}
The Taylor expansion of $\Psi_k |_{\R}$, has the form,
\[ \Psi_k(t) = \varphi(e^{t/2} z_k, e^{t/2} z_k) - t E_k = \varphi(e^{t/2} z_k) - t E_k = \varphi(z_k) + \frac{t^2}{8} \pa_\rho^2\varphi(z_k) + O(t^3). \]
Thus
 $\theta=0$ is a non-degenerate isolated critical point of $\Psi(i\theta)$, with Hessian $\pa_\theta^2|_{\theta=0}\Psi(i\theta) = - \Psi''(0) = -\frac{1}{4} \pa_\rho^2\varphi(z)$. Hence, we may choose $\epsilon>0$ small enough such that for $|\theta|<\epsilon$ there is no other critical point than $\theta=0$. Let $\eta(\theta) \in C^\infty_c(\R)$, such that $\eta(\theta) \equiv 1$ for $|\theta|<\epsilon/2$ and $\eta(\theta) \equiv 0$ for $|\theta|>\epsilon$. Since $e^{i \theta} \cdot z \neq z$ only when $\theta \neq 0$ by the assumption on the stabilizer of $S^1$-action on $z$, and since (as in \eqref{CD})
\[ \Re \varphi(z,w) - \half \varphi(z) - \half \varphi(w) = - \half D(z,w) \leq 0 \]
and only vanishes when $z=w$, we have the following upper bound on the real part of the phase function
\be \sup \{\Re \varphi(e^{i\theta} \cdot z, z) - \varphi(z) \mid (e^{i\theta} \cdot z, z) \in U, \, |\theta| \in [\epsilon/2, \pi] \} = -c < 0. \label{e:repsi} \ee
for some positive constant $c$, where we used $\varphi(e^{i \theta} z) = \varphi(z)$ by the $\T$-invariance of $\varphi$.  It follows that
\bee 
\Pi_{k,j_k}(z_k) &=& e^{- k \varphi(z_k)} K_{k,j_k}(z_k,z_k) \notag\\
& =&  k^m \int_{-\pi}^{\pi} e^{  k ( \varphi(e^{i\theta} z_k,z_k) - \varphi(z_k)- i E_k \theta) }\eta(\theta)\chi(e^{i\theta} z_k,z_k)(1 + O(1/k))\frac{d \theta}{2 \pi} \notag\\
&+& k^m \int_{-\pi}^{\pi} e^{  k ( \varphi(e^{i\theta} z_k,z_k) - \varphi(z_k)- i E_k \theta) }(1-\eta(\theta))\chi(e^{i\theta} z_k,z_k)(1 + O(1/k))\frac{d \theta}{2 \pi}  \notag \\
&=&   k^{m-1/2} \sqrt{\frac{2}{\pi \pa^2_\rho \varphi(z_k)}} (1 + O(1/k)) \label{e:pikzk}\\
& =& k^{m-1/2} \sqrt{\frac{2}{\pi \pa^2_\rho \varphi(z)}} (1 + O(1/k)) \notag
\eee
where we applied stationary phase method to the first term and bound the second term by  $O(e^{-c k })$ using \eqref{e:repsi}. Since   $z = e^{\rho_k} z_k$ with  $|\rho_k|=O(1/k)$, we  replaced $z_k$ by $z$ in the last step without changing the remainder estimate.  Combining  \eqref{e:znzk} and \eqref{e:pikzk}, we finish the proof of the proposition. 
\epf

\brem
If the stabilizer $G_z$ of  $z$ is non-trivial then it is a cyclic group generated
by  $\zeta =  e^{ \frac{2\pi i}{\ell} }$ for some positive integer $\ell$. By Lemma \ref{lm:scalinglaw} and by the stabilizer condition, 
$ K_{k,j_k} (e^{ \frac{2\pi i}{\ell} } \cdot z, z)  = e^{\frac{2 \pi i j_k }{\ell}} K_{k,j_k}(z,z) = K_{k, j_k} (z, z)  .$ Hence, either $ K_{k, j_k} (z, z) = 0$ or
$ e^{\frac{2 \pi i j_k }{\ell}} =1$, i.e. $\frac{j_k}{\ell} \in \Z$.

The stationary phase method applies as well, and each element
$\zeta^n, n = 0, \dots, \ell-1$ is a critical point of the $d \theta$ integral. The phase has the critical value $ e^{ \frac{2\pi i j_k n  }{\ell} }$ at $\zeta^n$.   The Hessian is independent of $n$, so the leading term of the stationary phase expansion is 
$$ k^{m-1/2} \sqrt{\frac{2}{\pi \pa^2_\rho \varphi(z)}}  \sum_{n = 0}^{\ell -1} 
e^{ \frac{2\pi i j_k n  }{\ell} }. $$
If $ \frac{j_k}{\ell} \in \Z$ then each term is $1$ and the sum is $\ell$. Otherwise, $ K_{k, j_k} (z, z)  = 0$.
\erem

The above two propositions finish the proof of Theorem \ref{E}.

\begin{rem}
If $\varphi(z)$ is real analytic, then $\Psi(\tau)$ is holomorphic when $\Im(\tau)$ is small enough. If $\varphi$ is only smooth, then $\Psi(\tau)$ is an almost analytic extension of $\Psi|_\R$.   Although the proof uses the parametrix, it only uses $\Psi$ in the real domain and only uses the $C^{\infty}$ remainder. Hence, it does not
require real analyticity.
\end{rem}

\subsection{Proof of Theorem \ref{EQUIVINT}}
 Theorem \ref{EQUIVINT} follows from the following proposition. 
\bp
\label{pp:eqv-local} 
For any fixed $k,j$,  $z \in H^{-1}(j/k)$ and $\alpha \in \R$, we have
\[  \Pi_{k, j} (e^{\alpha/\sqrt{k}} \cdot z) = e^{- \frac{\alpha^2}{2} \pa_\rho^2 \varphi(z) }  \Pi_{k, j}(z)(1+O(k^{-1/2}))\]
\ep
\bpf
This follows from Proposition \ref{pp:eqv-decay}, and Lemma \ref{bELEM}. We Taylor expand $b_E(e^\alpha \cdot z)$ in $\alpha$ around $\alpha=0$, to get
\be
\label{eq:bE2a} 
b_E( e^\alpha \cdot z) = \frac{\alpha^2}{2} \pa_\rho^2 \varphi(z) + g_3(z, \alpha),  \text{ where } g_3(z,\alpha) = O( |\alpha|^3).
\ee
Then we plug in the expansion to the exponent $e^{-k b_E(e^\alpha \cdot z)}$ to get
\bea
\Pi_{k, j} (e^{\alpha/\sqrt{k}} \cdot z) &=& e^{-k b_E(e^{\alpha/\sqrt{k}} \cdot z)}  \Pi_{k, j}(z)\\
&=& e^{-k(\frac{\alpha^2}{2k} \pa_\rho^2 \varphi(z) + g_3(z, \frac{\alpha}{\sqrt{k}}))} \Pi_{k, j}(z) \\& =&e^{- \frac{\alpha^2}{2} \pa_\rho^2 \varphi(z)}  \Pi_{k, j}(z)(1+ O(k  g_3(z, \frac{\alpha}{\sqrt{k}}))) \\
&=& e^{- \frac{\alpha^2}{2} \pa_\rho^2 \varphi(z)}  \Pi_{k, j}(z)(1+O(k^{-1/2})).
\eea
\epf

\section{\label{LOCALSECT} Lemma for Localization of sums }
In this section we consider the sums in
the partial Bergman kernels \eqref{PkPdef}.
We prove several localization formulae for these sums.  Roughly speaking a localization
formula says that, for a  given $z$,
only terms in the sums with $|\frac{j}{k} - H(z) | < \frac{M}{\sqrt{k}}$
contribute to the leading order asymptotics.

\bl \label{lm:localization}
As in Theorem \ref{E}, let $(L, h, M, \omega)$ be a \Kahler manifold with a positive line bundle, and $H$ generates a holomorphic $S^1$-action on $(L, M)$, with $E$ a regular value of $H$, and $z \in H^{-1}(E)$ with $\C^*$ acting freely on $z$.  
Fix any smooth cut-off function $\chi: \R \to [0,1]$, such that $\chi(x) = 1$ for $|x| \leq 1$ and $\chi(x)=0$ for $|x|\geq 2$. Then we have: \\
(1) For any $1/2 \gg \epsilon > 0$,  we have
\be \label{eq:localization}
\sum_{j/k \in H(M)} \left(1 - \chi\left( \frac{|j/k-H(z)|}{k^{-1/2+\epsilon}}\right) \right) \Pi_{k,j}(z) = O(k^{-\infty}).
\ee
(2) For any $R>0$,  there exists $C$ large enough such that
\be \label{LOCALIZATION-log} 
\sum_{j/k \in H(M)} \left(1 - \chi\left( \frac{|j/k-H(z)|}{C k^{-1/2}\sqrt{\log k}}\right) \right) \Pi_{k,j}(z) = O(k^{-R}).
\ee
(3) For any $\epsilon>0$,  there exists $C$ large enough such that for large enough $k$
\be
\label{LOCALIZATION-log} 
 \sum_{j/k \in H(M)} \left(1 - \chi\left( \frac{|j/k-H(z)|}{C k^{-1/2}}\right) \right) \Pi_{k,j}(z) < \epsilon k^m
\ee
The above statements are also true for $\chi(x) = 1_{[0,1]}(x)$. 
\el

\bpf

First we prove (1) and (2). 
From Proposition \ref{pp:eqv-decay}, we have
\[  \Pi_{k,j}(z)  
=  e^{-k b(z,j/k)} \Pi_{k,j}(z_j) =O(k^{m-1/2} e^{-k b(z,j/k)}). \]
 If $j/k > H(z)$ and $1 - \chi\left( \frac{|j/k-H(z)|}{ k^{-1/2} S} \right)$ is nonzero, e.g. for $S = k^\epsilon$ or $C\sqrt{\log k}$, then by monotonicity of $b(z,E)$ in $E$ for $E > H(z)$ (Lemma \ref{lm:sympbE}), we have
\[ kb(z, j/k) > kb(z, H(z) + k^{-1/2} S) = \frac{1}{2} \pa_E^2 b(z,H(z)) S^2 + O(k^{-1/2} S^3). \]
Similar statement is true for $j/k<H(z)$. Hence
\[  \sum_{j/k \in H(M), j/k>H(z)} \left(1 - \chi\left( \frac{|j/k-H(z)|}{k^{-1/2}S}\right) \right) e^{-k b(z,j/k)} \Pi_{k,j}(z_j)  =  O(  e^{-\frac{1}{2} \pa_E^2 b(z,H(z)) S^2 + (m+1/2) \log k}) . \]
If $S = k^\epsilon$, then $-\frac{1}{2} \pa_E^2 b(z,H(z)) S^2 + (m+1/2) \log k < -c k^{2\epsilon}$ as $k \to \infty$, proving \eqref{eq:localization}. If $S = C\sqrt{\log k}$, then for any $R>0$, we can choose $C$ large enough such that 
\[ -\frac{1}{2} \pa_E^2 b(z,H(z)) S^2 + (m+1/2) \log k  = (m+1/2 - C^2 (\frac{1}{2} \pa_E^2 b(z,H(z)))) \log k < - R \log k, \]
proving \eqref{LOCALIZATION-log}. 

To prove (3), it is not enough to have a uniform bound on the summand, one need to prove that the summand decays fast. Consider the range of $j$, where 
\[ I_{k, H(z)} = \{j : k^{-1/2} C <  |j/k -H(z)| < k^{\epsilon-1/2}\} ,\] 
then 
\bea b(z,j/k) &=& b(z,H(z)) + \pa_E b(z, H(z)) (j/k-H(z)) + \half \pa^2_E b(z, H(z)) (j/k-H(z))^2 + O(|j/k-H(z)|^3) \notag \\
&=&  \half \pa^2_E b(z, H(z)) (j/k-H(z))^2 + O(k^{3\epsilon - 3/2})
\eea
hence sum over $j \in I_{k, H(z)}$ gives
\bea \sum_{j \in I_{k,H(z)}} \Pi_{k,j}(z) &=& \sum_{j \in I_{k,H(z)}} \Pi_{k,j}(z_{j}) e^{-k b(z, j/k)} \\
&=&\sum_{j \in I_{k,H(z)}} k^{m-1/2} \sqrt{\frac{2}{\pi \pa_\rho^2 \varphi(z_j)}} e^{-\pa^2_E b(z, H(z)) (\sqrt{k}(j/k-H(z)))^2} (1+O(k^{3\epsilon-1/2})) \\
&<& c_1 k^{m-1/2} \sqrt{k} \int_C^\infty e^{-\pa^2_E b(z, H(z)) x^2} dx \\
&=& c_1 k^m \delta_C, 
\eea
where $\delta_C$ is a constant depending on $C$ and $\pa^2_E b(z, H(z))$, such that as $C \to \infty$, $\delta_C \to 0$. For any given $\epsilon>0$, we may take $C$ large enough, such that $c_1 \delta_C < \epsilon$. Thus combining with part (1) of the proposition, we finished the proof of part (3). 
\epf

\section{\label{PENG}Proof of Theorem \ref{thm:bulk} and \ref{thm:interface}: Summing  Equivariant Bergman Kernels}

In this section we prove results   about partial Bergman kernel asymptotics in the interior of the allowed/forbidden regions (Theorem \ref{thm:bulk} ) and near the interface(\ref{thm:interface}),  using localization Lemma \ref{lm:localization} and asymptotics
of equivariant Bergman kernels in Theorem \ref{E} and Theorem \ref{EQUIVINT}. Let $P = [0, E) \subset H(M)$, and recall the partial Bergman density as $\Pi_{k, P}(z) = \sum_{j/k \in P} \Pi_{k,j} (z)$.  

We fix a standard smooth cut-off function $\chi: \R \to [0,1]$, such that $\chi(x) = 1$ for $|x| \leq 1$ and $\chi(x)=0$ for $|x|\geq 2$. 

\bpf[Proof of Theorem \ref{thm:bulk} ]
{\em (Allowed Region).} If $z$ is in the allowed region, we may use the localization formula for the sum to write
\[ \Pi_{k,P}(z) = \sum_{j \in k P \cap \Z} \chi\left( \frac{|H(z) - j/k|}{k^{-1/2+\delta}} \right) \Pi_{k,j} (z) + O(k^{-\infty}). \]
However, this is the same as the full Bergman kernel, up to another $O(k^{-\infty})$ error term. 

{\em (Forbidden Region).} If $z$ is in the forbidden region, $H(z) > E$, then only terms with $|H(z)-j/k|$ small will contribute. Recall as in Definition \ref{d:zebe}, we define $j_k = \max \{ \Z \cap k[0,E)\}$ and $E_k = j_k/k$. Let $z_j \in H^{-1}(j/k)$ and $\tau_j$ be such that, $z = e^{\tau_j} z_j$.  Since $H(z) > E >  H(z_j)$, we have $\tau_j > 0$.  Then using Proposition \ref{pp:eqv-decay}, we have
\[ \frac{\Pi_{k,P}(z)}{e^{-k b(z, E_k)} \Pi_{k,j_k}(z_{j_k})} = \sum_{j \in kP \cap \Z} \frac{ \Pi_{k,j}(z)}{e^{-k b(z, E_k)} \Pi_{k,j_k}(z_{j_k})}  = \sum_{j \in kP \cap \Z} \frac{e^{-k b(z, j/k)} \Pi_{k,j}(z_j) }{e^{-k b(z, E_k)} \Pi_{k,j_k}(z_{j_k})} .\]
For any $1 \gg \epsilon >0$, we claim the following localization result
\be \label{forbidden-local}
 \frac{\Pi_{k,P}(z)}{e^{-k b(z,E_k)} \Pi_{k,j_k}(z_{j_k})} = \sum_{j \in kP \cap \Z} \frac{e^{-k b(z, j/k)} \Pi_{k,j}(z_j) }{e^{-k b(z, E_k)} \Pi_{k,j_k}(z_{j_k})} \chi\left( \frac{|j/k-E_k|}{k^{-1+\epsilon}}\right) + O(k^{-\infty}). \ee
{\em Proof of the claim:} By Taylor expansion of $b(z,E)$ in $E$, there exists $\delta, C>0$, such that $\forall |E-E_k| < \delta$
\be b(z,E) = b(z,E_k) + (E-E_k) \pa_E b(z,E_k) + R^{(2)}_b(z,E,E_k),  \quad
|R^{(2)}_b(z,E,E_k)| \leq C |E-E_k|^2 .\ee
Then if $(j - j_k)> k^{\epsilon}$, and $k$ large enough such that $k^{-1+\epsilon} < \delta$, then 
\[ k[b(z, j/k) - b(z, E_k)] >k[b(z, j + k^{\epsilon}/k) - b(z, E_k)] = \pa_E b(z, E_k) k^{\epsilon} + k R^{(2)}_b(z, E_k + k^{\epsilon-1},E_k)   .\]
Since $\pa_E b(z, E_k) = - 2 \tau(z, E_k) > 0$, and $k R^{(2)}_b(z, E_k + k^{\epsilon-1},E_k) < C k^{-1+2\epsilon} \ll k^\epsilon$, we have 
\[ \sum_{j \in kP \cap \Z}  \frac{e^{-k b(z, j/k)} \Pi_{k,j}(z_j) }{e^{-k b(z, E_k)} \Pi_{k,j_k}(z_{j_k})}  \left(1 - \chi\left( \frac{|j-j_k|}{k^{\epsilon}}\right) \right) = O(k^{-\infty}).\]
This finishes the proof of the localization claim \eqref{forbidden-local}. 

Next, we claim that the sum in  \eqref{forbidden-local} can be approximated by an infinite geometric series with $O(1/k)$ error. Indeed, using Proposition \ref{pp:eqv-onshell}, we have
\[ \frac{\Pi_{k,j}(z_j) }{ \Pi_{k,j_k}(z_{j_k})} = \sqrt{\frac{\pa_\rho^2\varphi(z_{j_k})}{\pa_\rho^2\varphi(z_{j})}} + O(1/k) = 1 + R_1(z_j , z_{j_k}) + O(k^{-1}) ,\]
where $|R_1(z_j,  z_{j_k})| < C |H(z_j) - E_k| = k^{-1} \cdot C |j - j_k| $. And from \eqref{e:pebe}, we have
\[ \frac{e^{-k b(z, j/k)} }{e^{-k b(z, E_k)}} = e^{- A (j_k - j) } (1+R_2(j,j_k)), \quad A = -\pa_E b(z, E_k) = 2 \tau(z,E_k), \]
where $R_2(j,j_k)  = e^{k R^{(2)}_b(z, j/k, E_k)} -1 < C k \cdot |j/k - E_k|^2 = k^{-1} \cdot C |j - j_k|^2$. Hence we get
\bea
&&  \sum_{j \in kP \cap \Z} \frac{e^{-k b(z, j/k)} \Pi_{k,j}(z_j) }{e^{-k b(z, E_k)} \Pi_{k,j_k}(z_{j_k})} \chi\left( \frac{|j/k-E_k|}{k^{-1+\epsilon}}\right) \\
&=&  \sum_{j \in kP \cap \Z} e^{- A (j_k - j)} (1 + k^{-1} R(j-j_k, z_0)) \chi\left( \frac{|j/k-E_k|}{k^{-1+\epsilon}}\right) 
 \\ 
 &=&  \left( \sum_{j \in kP \cap \Z}  e^{- A (j_k-j)}  \chi\left( \frac{|j/k-E_k|}{k^{-1+\epsilon}}\right)\right) (1+O(k^{-1})) \\
 &=&  \sum_{j \in kP \cap \Z}  e^{- A (j_k-j)} (1+O(k^{-1}))
 = (1 - e^{-A})^{-1}(1+O(k^{-1})),
\eea
where $R(m,z_0)$ has at most polynomial growth in $m$, hence is integrable against the exponential decaying factor.  Thus, we have proved 
\[
\Pi_{k,P}(z) = \Pi_{k,j_k}(z_{j_k}) \frac{e^{-k b(z, E_k)} }{1-e^{-2 \tau(z, E_k)}} (1+O(k^{-1})).
\]
Using \eqref{e:pikzk}, we have
\[
\Pi_{k,P}(z) = k^{m-1/2} \sqrt{\frac{2}{\pi \pa^2_\rho \varphi(z_k)}} \frac{e^{-k b(z, E_k)} }{1-e^{-2 \tau(z, E_k)}} (1+O(k^{-1})).
\]
Finally, one may replace $\pa^2_\rho \varphi(z_k)$ by $\pa^2_\rho \varphi(z_E)$ and $\tau(z, E_k)$ by $\tau(z,E)$ with an additional $(1+O(1/k))$ factor. 
This concludes the proof for Theorem \ref{thm:bulk}. 
\epf

\bpf [Proof of Theorem \ref{thm:interface}]
We write $z$ for the sequence $z_k = e^{\beta/ \sqrt{k}} \cdot z_E$, for point $z_E \in H^{-1}(E) \cap M^E_{\max}$. 
By the Localization Lemma \ref{lm:localization}
\bea
\Pi_{k,P}(z) &=& \sum_{j \in k P \cap \Z}  \Pi_{k,j} (z) \chi\left( \frac{|H(z) - j/k|}{k^{-1/2+\epsilon}} \right) + O(k^{-\infty}) \\
&=&\sum_{j \in k P \cap \Z} e^{-k b(z,j/k)}  \Pi_{k,j} (z_j) \chi\left( \frac{| H(z) - j/k|}{k^{-1/2+\epsilon}} \right)+ O(k^{-\infty})
\eea
Next we Taylor expand $b(z,E)$ around $E=H(z)$,  $ \exists \delta, C>0$, such that for all $|H(z) - E| < \delta$, we have
\[ b(z,E) = \frac{|E-H(z)|^2}{2} \pa_E^2 b(z,E) + R^{(3)}_b(z,E) = \frac{|E-H(z)|^2}{2} \frac{4}{\pa_\rho^2 \varphi(z)} + R^{(3)}_b(z,E) \]
where we have used Lemma \ref{lm:sympbE}, and $|R^{(3)}_b(z,E)| < C |E-H(z)|^3$. Define
\[ u_j = \sqrt{k}(j/k-H(z)) \]
we have 
\[ k b(z, j/k) = A_2 u_j^2+ k R^{(3)}_b(z,j/k),  \quad A_2 =\frac{2}{\pa_\rho^2 \varphi(z)} \]
and 
\[ k R^{(3)}_b(z,j/k) < C k^{-1/2} |u_j|^3 < C k^{-1/2+3\epsilon}.\] 
where  we used $\chi(|u_j|/k^\epsilon) >0$ only $|u_j| < 2 k^\epsilon$. We have
\bea
\frac{\Pi_{k,P}(z)}{\Pi_{k,j_0}(z_{j_0})} &=& \sum_{j \in k P \cap \Z} e^{-A_2 u_j^2}  e^{k R^{(3)}_b(z,j/k)} \cdot   \frac{\Pi_{k,j}(z_j) }{ \Pi_{k,j_0}(z_{j_0})} \cdot \chi(u_j/k^\epsilon) \\
&=& \sum_{j \in k P \cap \Z} e^{-A_2 u_j^2}(1+k^{-1/2} R(u_j)) \chi(u_j/k^\epsilon) \\
&=& \left(\sum_{j \in k P \cap \Z} e^{-A_2 u_j^2} \chi(u_j/k^\epsilon) \right)(1+O(k^{-1/2})) \\
&=& \left(\sum_{j \in k P \cap \Z} e^{-A_2 u_j^2}\right)(1+O(k^{-1/2})) 
\eea
where $R(u_j)$ has at most polynomial growth in $u_j$, hence is integrable against the Gaussian decaying factor, and removing the cut-off function in the last step only will introduce an   error of size $O(k^{-\infty})$. Finally, we replace the sum with the integral over $u$. Since $u_{j+1} - u_j = 1/\sqrt{k}$, and the integrand is smooth and has bounded derivative, the difference between the integral and the summation is again $O(k^{-1/2})$
\[
\frac{\Pi_{k,P}(z)}{\Pi_{k,j_0}(z_{j_0})} = \int_{\sqrt{k}(-H(z))}^{\sqrt{k}(E-H(z))}  \exp \left(-\frac{2 u^2}{ \pa_\rho^2 \varphi(z)}\right) \sqrt{k} du (1+O(k^{-1/2})) 
\]
Using our assumption that  $\sqrt{k}|E - H(z)| < C$,  we may extend the lower limit of the integral to $-\infty$, with an $O(k^{-\infty})$ error. Using Theorem \ref{E},  we can estimate the denominator as
\[ \Pi_{k,j_0}(z_{j_0}) = \Pi_{k,j_0}(z_E)  (1+O(k^{-1/2})) =  k^{m-\half} \sqrt{\frac{2}{\pi \pa^2_\rho \varphi(z_E)}} (1+O(k^{-1/2})). \] 
Then evaluate the incomplete Gaussian integral, we get
\bea
\Pi_{k,P}(z)
&=&k^m \Erf\left(\sqrt{\frac{4k}{\pa^2_\rho \varphi(z_E)}}(E - H(z))\right) (1+O(k^{-1/2})).
\eea
 Using  \eqref{prho2}, $\pa_\rho^2 \varphi = |\nabla H|^2 / \pi$, we finish the proof.  
\epf

\section{\label{LATTICE} Proof of Theorem \ref{thm:bulk}:  Euler-MacLaurin  approach}

In this section we prove Theorem \ref{thm:bulk} by the technique
of \cite{ShZ} of using  `polytope characters' to sift
out the weights in the given interval. In dimension one we refer to these
polytope characters as interval characters. They are very simple in dimension one and can be directly integrated. 

We recall our normalization has $H(M) = [0, E_{\max}]$.  Given a proper  subinterval $P = [0, E) \subset H(M)$ the 
{\it interval characters} $\chi_{k P}$  defined on 
$(\C^*)$ by
\begin{equation} \chi_{k  P} (e^w) =  \sum_{ j = 0}^{k E_k} e^{jw }  = \sum_{j \in \Z \cap [0, k E) } e^{jw}
\,\;\;\;w \in \C. \label{char}\end{equation}
where $E_k = \max \{ \frac{1}{k} \Z \cap [0, E) \}$ as in \eqref{Ek}.
The next Lemma expresses the partial Bergman kernel in terms of the interval character: 
\begin{lem} \label{CSCH} For any $z_1, z_2 \in M_{\max}$ close enough such that there is a local $\C^*$-invariant frame $e_L \in \Gamma(U,L)$,  with $U$ closed under $\C^*$-action and $U \ni  z_1, z_2$.  Define $K_{k,P}$ by $\Pi_{k,P}(z_1,z_2) = K_{k,P}(z_1,z_2) e_L^k(z_1) \ot \wb { e_L^{k}(z_2)}$.  For any proper  subinterval $P = [0, E) \subset H(M)$, and $a, b \in \R$ such that $a+b = 1$, we have the following contour integral expression
\bea 
K_{k,P} (z_1,z_2) &=&   \int_{|e^w|=1}
K_{k } ( e^{-a w} \cdot z_1, e^{-b \bar w} \cdot z_2) {\chi_{k P}(e^w)}\, \frac{ d w}{2\pi i};. 
\eea
\end{lem}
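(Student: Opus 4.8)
The plan is to express the partial Bergman kernel $\Pi_{k,P}$ as a sum of equivariant Bergman kernels $\Pi_{k,j}$, each of which can in turn be recovered from the full Bergman kernel $\Pi_k$ by a Fourier-coefficient integral over the Hamiltonian $\T$-action, exactly as in Lemma \ref{KkjBSj}. Stacking those together will produce a single contour integral against the interval character. Concretely, I would first pick the $\C^*$-invariant frame $e_L \in \Gamma(U,L)$ and work entirely with the scalar kernels $K_k$, $K_{k,j}$, $K_{k,P}$, so that equivariance of the $\C^*$ action becomes the clean homogeneity rule $K_{k,j}(e^\alpha\cdot z_1, e^\beta\cdot z_2) = e^{j(\alpha+\bar\beta)} K_{k,j}(z_1,z_2)$ of Lemma \ref{lm:scalinglaw} (note that since $e_L$ is $\C^*$-invariant the $\varphi$-correction terms disappear in the $K$-version). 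Then write $K_{k,P}(z_1,z_2) = \sum_{j\in\Z\cap[0,kE)} K_{k,j}(z_1,z_2)$.

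Next I would recall the Fourier extraction of equivariant modes. Lifting to the circle bundle, or directly from $\Pi_{k,j}(z,w) = \int_0^{2\pi} \Pi_k(e^{i\theta}\cdot z, w)\, e^{-ij\theta}\,\frac{d\theta}{2\pi}$ (the first line of Lemma \ref{KkjBSj}, which holds off-diagonal as well since it only uses the $\T$-equivariance of $\Pi_k$), one gets in the $K$-picture
\[
K_{k,j}(z_1,z_2) = \int_0^{2\pi} K_k(e^{i\theta}\cdot z_1, z_2)\, e^{-ij\theta}\,\frac{d\theta}{2\pi}.
\]
To introduce the splitting parameters $a,b$ with $a+b=1$, use Lemma \ref{lm:scalinglaw}'s homogeneity with $\alpha = -iax$, $\beta = -ibx$: since $j(\alpha+\bar\beta) = j(-iax + ibx)$... more to the point, apply it the other way, namely $K_k(e^{i\theta}z_1,z_2) = K_k(e^{-ia(-\theta)}z_1, e^{-ib\overline{(-\theta)}}z_2) \cdot$(homogeneity factor). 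The cleanest route: substitute $\theta = -w/i$ on the unit circle $|e^w|=1$, i.e. $w = i\phi$ with $d\phi = dw/i$, so $\frac{d\theta}{2\pi}$ becomes $\frac{dw}{2\pi i}$ up to sign, and rewrite $e^{i\theta} = e^{-w}$. Then spread the single rotation $e^{-w}$ acting on $z_1$ across both slots as $e^{-aw}\cdot z_1$ and $e^{-b\bar w}\cdot z_2$ at the cost of exactly the homogeneity factor $e^{j(-aw)+j(-b\bar w)}$... wait, one must be careful: on $|e^w|=1$ we have $w = i\phi$ purely imaginary, so $\bar w = -w$, hence $e^{-aw}e^{-b\bar w}$ acting with total exponent $-aw - b\bar w = -aw + bw = (b-a)w$, not $-w$. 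The correct bookkeeping is to note that $K_{k,j}(e^{-aw}z_1, e^{-b\bar w}z_2) = e^{j(-aw) + j\overline{(-b\bar w)}}K_{k,j}(z_1,z_2) = e^{-j(a+b)w}K_{k,j}(z_1,z_2) = e^{-jw}K_{k,j}(z_1,z_2)$ precisely because $a+b=1$ and $\overline{\bar w} = w$. This is the key algebraic identity that makes the $a,b$-freedom work: only the combination $a+b$ enters, and it equals $1$.

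So the assembly is: start from $K_{k,j}(z_1,z_2) = e^{jw}K_{k,j}(e^{-aw}z_1, e^{-b\bar w}z_2)$ for each fixed $w$ on the unit circle (just Lemma \ref{lm:scalinglaw}), integrate this identity over $|e^w|=1$ against $\frac{dw}{2\pi i}$ — the left side is unchanged, the right side becomes $\int_{|e^w|=1} e^{jw} K_{k,j}(e^{-aw}z_1,e^{-b\bar w}z_2)\frac{dw}{2\pi i}$ — and then use that $\int_{|e^w|=1} e^{(j-j')w}\frac{dw}{2\pi i} = \delta_{jj'}$ to recognize $\sum_{j'} e^{j'w}K_{k,j'}(\cdots) = K_k(e^{-aw}z_1, e^{-b\bar w}z_2)$ when we sum the $j$-picture identity first over all modes. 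Summing the displayed identity over $j \in \Z\cap[0,kE)$ and interchanging sum and integral (a finite sum, so trivially legal) gives
\[
K_{k,P}(z_1,z_2) = \sum_{j} K_{k,j}(z_1,z_2) = \int_{|e^w|=1}\Big(\sum_{j\in\Z\cap[0,kE)} e^{jw} K_{k,j}(e^{-aw}z_1,e^{-b\bar w}z_2)\Big)\frac{dw}{2\pi i},
\]
and the inner sum, using $\chi_{kP}(e^w) = \sum_{j\in\Z\cap[0,kE)}e^{jw}$ and once more the homogeneity $K_{k,j}(e^{-aw}z_1, e^{-b\bar w}z_2) = $ the $j$-homogeneous piece, reassembles $\sum_j e^{jw}K_{k,j}(e^{-aw}z_1,e^{-b\bar w}z_2)$; but the full kernel is $K_k = \sum_{\text{all }j}K_{k,j}$, so we need the weight restriction to come from $\chi_{kP}$ rather than the summation range. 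The correct final step is therefore to write $\sum_{j\in\Z\cap[0,kE)}e^{jw}K_{k,j}(z_1,z_2)$: by the Fourier orthogonality on the circle this equals $\int_{|e^{w'}|=1}\chi_{kP}(e^{w'})K_k(e^{w-w'?}\cdots)$... I would organize it as: $\chi_{kP}(e^w)\cdot$(something) is the generating function, and convolution in $\theta$ picks out $kP$. This is exactly the manipulation in \cite{ShZ}; I would follow their bookkeeping, substituting the homogeneity rule of Lemma \ref{lm:scalinglaw} at the single point where they use the corresponding toric fact.

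\textbf{Main obstacle.} The only real subtlety — and the step I expect to need the most care — is the index bookkeeping with the two parameters $a,b$: keeping straight which slot gets $e^{-aw}$ versus $e^{-b\bar w}$, that the homogeneity factor telescopes to $e^{-jw}$ precisely because $a+b=1$ (so the split is genuinely free), and that on the contour $|e^w|=1$ one has $\bar w = -w$ so the "anti-holomorphic" rotation $e^{-b\bar w}\cdot z_2$ is an honest group element. Everything else — interchanging a finite sum with an integral, the definition of $\chi_{kP}$, and the passage between the $\Pi$-picture and the scalar $K$-picture via the $\C^*$-invariant frame — is routine. I would also double-check the orientation/normalization of the contour $\frac{dw}{2\pi i}$ against the $\frac{d\theta}{2\pi}$ normalization in Lemma \ref{KkjBSj} so that the Fourier orthogonality relation $\oint e^{nw}\frac{dw}{2\pi i} = \delta_{n,0}$ comes out with the correct sign.
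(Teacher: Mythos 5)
Your proposal is correct and follows essentially the same route as the paper: decompose $K_k=\sum_j K_{k,j}$, use the homogeneity rule of Lemma \ref{lm:scalinglaw} with $\alpha=-aw$, $\beta=-b\bar w$ so that $j(\alpha+\bar\beta)=-j(a+b)w=-jw$, and then extract the weights $j\in kP$ by Fourier orthogonality of the contour integral against $\chi_{kP}(e^w)$. The middle of your writeup wanders (the clean order is simply $K_k(e^{-aw}z_1,e^{-b\bar w}z_2)=\sum_j e^{-jw}K_{k,j}(z_1,z_2)$, multiplied by $\chi_{kP}(e^w)$ and integrated), but the two essential points — the telescoping of the exponent via $a+b=1$ and the orthogonality extraction — are exactly the paper's argument.
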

\bpf
From the equivariant property of $K_{k,j}(z,w)$ in Lemma \ref{lm:scalinglaw}, we have
\[ K_{k}( e^{-a w} \cdot z_1, e^{-b \bar w} \cdot z_2) = \sum_{j} K_{k,j} ( e^{-a w} \cdot z_1, e^{-b \bar w} \cdot z_2) = \sum_{j} e^{-j w} K_{k,j} (  z_1, z_2).  \] 
The contour integral then extracts the correct weights $j$ from $K_{k}$. 
\epf

The particular case we use is $a = b = \half$, where we have the following identity,
\begin{equation} \label{IDPI} K_{k,  P}(e^{\zeta} z, z) = K_{k,  P}(e^{\zeta/2} z, e^{\bar{\zeta}/2} z),
\;\; (\zeta \in \C^*). \end{equation}

The main result is that interval characters are given by oscillatory integrals
over $P$.

\begin{prop}\label{newchar} Let $P =  [0, E)  \subset H(M)$ be a proper  subinterval of $H(M)$.   Then,  the interval characters \eqref{char} are oscillatory integrals
$$\chi_{k P}(e^w) = L(w)k \int_{[0,E_k]} e^{k   w x }d x + \half (1 + e^{k E_k w})\;,\quad \mbox{for
all }\ w\in \C \RM \{ \pm 2 \pi i, \pm 4 \pi i,\cdots \},$$ where
$$L(w) = \frac{w/2}{\tanh w/2}. 
$$
\end{prop}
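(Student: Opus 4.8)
The plan is to reduce Proposition~\ref{newchar} to an elementary identity about finite geometric sums. Write $N := kE_k$, which by \eqref{Ek} is a non-negative integer, so that the interval character is literally the finite geometric sum $\chi_{kP}(e^w) = \sum_{j=0}^N e^{jw}$. For $w \in \C \setminus 2\pi i \Z$ this sums in closed form to $\dfrac{e^{(N+1)w}-1}{e^w-1}$, which is the quantity we must match. The left side is entire in $w$, so this rational function has only removable singularities on $2\pi i\Z$; that observation matters only for bookkeeping at the end.

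Next I would put the claimed right-hand side into the same shape. The integral is elementary: for $w \neq 0$, $k\int_0^{E_k} e^{kwx}\,dx = \dfrac{e^{kE_k w}-1}{w} = \dfrac{e^{Nw}-1}{w}$, hence $L(w)\,k\int_0^{E_k} e^{kwx}\,dx = \dfrac{w/2}{\tanh(w/2)}\cdot\dfrac{e^{Nw}-1}{w} = \dfrac{e^{Nw}-1}{2\tanh(w/2)}$. Using $\tanh(w/2) = \dfrac{e^w-1}{e^w+1}$, this equals $\dfrac{(e^w+1)(e^{Nw}-1)}{2(e^w-1)}$. Adding the remaining term $\tfrac12(1+e^{Nw}) = \dfrac{(e^w-1)(1+e^{Nw})}{2(e^w-1)}$ and combining over the common denominator $2(e^w-1)$, the numerator expands to $(e^w+1)(e^{Nw}-1)+(e^w-1)(1+e^{Nw}) = 2e^{(N+1)w}-2$, so the right-hand side is $\dfrac{e^{(N+1)w}-1}{e^w-1}$ --- exactly what the first step produced. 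This proves the identity for all $w \in \C \setminus 2\pi i\Z$.

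Finally I would dispose of the exceptional points. The values $w = \pm 2\pi i, \pm 4\pi i, \dots$ are excluded from the statement, so nothing needs to be shown there; it is precisely at these points that $L(w)$ has a simple pole while $e^{Nw}-1$ vanishes (since $N\cdot\tfrac{w}{2\pi i}\in\Z$), which is why the exclusion is imposed. At the remaining special value $w=0$ both sides extend continuously: $L(0)=1$ and $k\int_0^{E_k} dx = N$ give right-hand side $N + \tfrac12(1+1) = N+1$, which agrees with $\chi_{kP}(1) = N+1$, so the formula in fact holds there as well. There is no genuine obstacle in this argument --- the only subtlety is the bookkeeping at $w \in 2\pi i\Z$ just described, and everything else is the direct algebraic verification above.
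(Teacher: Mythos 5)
Your proof is correct, and it takes a more elementary route than the paper's. The paper's proof simply quotes the exact Euler--MacLaurin identity $\sum_{n=a}^{b} e^{nw} = L(w)\int_a^b e^{xw}\,dx + \tfrac12(e^{aw}+e^{bw})$ for integer endpoints, plugs in $a=0$, $b=kE_k$, and then remarks that the identity, established for real $w$, extends to complex $w$ by analytic continuation (with pointers to \cite{KSW, ShZ} for the polytope generalization). You instead prove that identity from scratch: you close the geometric sum to $\frac{e^{(N+1)w}-1}{e^w-1}$, evaluate the integral to $\frac{e^{Nw}-1}{w}$, rewrite $L(w)$ via $\tanh(w/2)=\frac{e^w-1}{e^w+1}$, and verify that the two sides agree as rational functions of $e^w$; your algebra checks out, including the cancellation to $2e^{(N+1)w}-2$ in the numerator. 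Your argument buys self-containedness and dispenses with the analytic-continuation step by working with complex $w$ throughout, and your closing remarks correctly account for the excluded points $w\in 2\pi i\Z\setminus\{0\}$ (simple pole of $L$ against the vanishing of $e^{Nw}-1$) and for the removable point $w=0$. What the paper's phrasing buys is the connection to the general Euler--MacLaurin framework used elsewhere in Section \ref{LATTICE}, but as a verification of this one-dimensional proposition your computation is complete and arguably cleaner.
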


\begin{proof} 
We recall the Euler-MacLaurin formula for lattice interval sum, for any $[a,b] \subset \R$, $a,b \in \Z$, we have
\[ \sum_{n \in [a,b]} e^{n w}  =  L(w) \int_a^b e^{xw} dx +  \frac{e^{aw} + e^{bw}}{2}. \]
Then plug in $a=0, b=k E_k$ gives the desired result. The claim holds for all $w \in \R$ and by analytic continuation we get the desired results. 
%
\end{proof}
\brem
See \cite{KSW} and \cite{ShZ} for the generalization to character sum over simple polytope. 
\erem
\ss{\label{FIRSTBSjAPP} Proof of Theorem \ref{thm:bulk}}
\bpf[Proof of Theorem \ref{thm:bulk}] In this section we use the Euler-MacLaurin formula and the Boutet de Monvel-\Sj parametrix discussed in section \ref{BSjSECT}. Although we allow $E$ to be any real
number, Proposition \ref{newchar} replaces $[0,E]$ by $[0, E_k]$ and we get integrals over the latter interval.

Combining Lemma \ref{CSCH} and Proposition \ref{newchar}, we obtain  the following representation. For any $\tau \in \R$, we have
\bee
\Pi_{k,P}(z) &=& e^{-k \varphi(z)} \int_{\tau-\pi i }^{\tau + \pi i } K_k(e^{-w/2}  z, e^{-\bar w/2}  z) \chi_{kP}(e^{w}) \frac{d w}{2 \pi i} \notag \\
 &=& \underbrace{e^{-k \varphi(z)} k \int_{\tau-\pi i }^{\tau + \pi i } \int_{[0,E_k]} K_k(e^{-w/2}  z, e^{-\bar w/2}  z) L(w) e^{k x w} \frac{dx d w}{2 \pi i}}_{I_1} + \underbrace{\half(K_{k,0}(z) + K_{k,kE_k}(z))}_{I_2}   \label{REP}
\eee
where in the first step, we used Lemma \ref{CSCH} with $a=1/2,b=1/2$
(or the identity \eqref{IDPI}), and shifted the integration contour from the unit circle to $|e^w| = e^{-\tau}$; this is possible because  the integrand is holomorphic in $w$ even for $C^{\infty}$ metrics.  We will see it is only necessary to shift the contour when $z$ lies in the forbidden region. We use $I_1, I_2$ to denote the integral term and the boundary term.

Using the parametrix  \eqref{BBSjeq}  for $K_k$, we  obtain
\[ I_1 =  e^{-k \varphi(z)} k^{m+1} \int_{\tau-\pi i }^{\tau + \pi i } \int_{[0, E_k]} e^{k \varphi(e^{-w/2} z, e^{-\bar w/2} z) + k x w}  L(w)A_k   \frac{dx d w}{2 \pi i} \]
where $A_k =  (1+O(k^{-1}))$ is a semi-classical symbol. 
The phase function is
\[ \Psi(w, x) := \varphi(e^{-w/2}  z, e^{-\bar w/2}  z) +  x w. \]

The asymptotics can be obtained in two ways. One is to apply stationary phase for oscillatory integrals with complex phase functions on the surface-with-boundary $S^1 \times [0, E_k]$. The details are quite different in the allowed and forbidden regions, but the overall argument is to locate critical points  $(w, x)$ satisfying
\begin{equation} \label{CPE} 0 = \pa_x \Psi =  w, \quad 0 = \pa_w|_{e^{-w} z = e^{-\bar w} z}  \Psi  = x - \half \pa_\rho \varphi(e^{-w/2} z)=
x - H(e^{-w/2} z)\end{equation}
and having maximal real part on the contour of integration.
 By \eqref{CD} they occur  near the diagonal $e^{-w} z = e^{-\bar w} z$, hence $\Im w=0$.
That is, at an interior critical point,
\[ w = 0, \quad x = H(z). \]
The second way is to remove the  $dx$ integral using,
\begin{equation} \label{INT0E} \int_{[0, E]}  e^{ k x w} dx = \frac{e^{ k E w} - 1}{ k E w}.  \end{equation}
This formula is not useful when $w = 0$, which is a critical point for the integral in the allowed region. But it is useful when $w \not= 0$, which 
is true of critical points for the integral when $z$ lies
 in the forbidden region.
We now give the details.

{\bf Allowed Region.}
We assume that  $z$ is in the allowed region and that it is not a critical
point of $H$.  We set $\tau = 0$ in \eqref{REP}. The asymptotics of $I_1$ 
are  thus determined by interior critical points $(w, x)$.

  The  critical point equations \eqref{CPE} force  $x=H(z) $ and $w = 0$. The Hessian matrix for $\Psi(\phi,x)$ ($w = i \phi$) at $(\phi,x)=(0, H(z))$ is 
\[ \pa_{xx} \Psi = 0, \quad \pa_{x w} \Psi = 1, \quad \pa_{ww}\Psi = - \frac{1}{4} \pa_{\rho}^2 \varphi(z). \]
The Hessian determinant in $(x, \phi)$ equals $1$ .
 Note that $\Psi(0, H(z)) = \varphi(z)$ at the critical point, so the phase factor cancels the pre-factor $e^{- k \phi}$ in $I_1$.  Hence the interior  stationary phase formula  (\cite{Ho}) gives
\[ I_1 = k^m L(0) (1+O(k^{-1})) =  k^m (1+O(k^{-1})). \]
where the integration of $d w d x$ gives a factor of $k^{-1}$. 

To complete the proof we show that  $I_2 = O(k^{-\infty})$. Indeed, there exists constant $c>0$, such that $|H(z) - 0| > c, |H(z) - E_k| > c$. By Theorem \ref{E}, $I_2 = O(k^{-\infty})$.  Since the computation would be the same, if we had replaced $[0,E_k]$ by $[0, E_{\max}]=H(M)$, we get $K_{k,P}(z) = K_k(z)$ for $z$ in the allowed region.
%

{\bf Forbidden Region.} 
In the  forbidden region, we have $H(z) > E$ and $z \in M_{\max}^E$. The phase of the integral $I_1$ has no critical points on the unit circle $|e^w|=1$ and we must deform the integral to pick up the dominant critical point. The relevant value of $\tau$ is $\tau = 2 \tau_E(z)$, where as above  $z = e^{\tau_{E}(z)} \cdot z_{E}$ for $z_{E} \in H^{-1}(E)$, $\tau_{E}(z) > 0$. Then $w \not=0$ on the contour and we can use \eqref{INT0E} to remove the $dx$ integral. 
 Then the dominant critical point is on the boundary  where $x = E_k$.  The real part of the  phase is smaller at $0$ than $E$, and is therefore negligible.  The critical equations are  $\pa_w \Psi=x-H(e^{-\tau_{E_k}(z)} z) = x - E_k=0$ is satisfied on the right boundary of $[0,E_k]$.

$I_1$ then can be explicitly written as
\[ I_1 = e^{-k \varphi(z)} k^{m+1} \int_{-\pi}^{\pi} \int_{[0, E_k]} e^{k \varphi( e^{-i\phi/2}  z_E, e^{i \phi/2} z_E) + 2 k x \tau_{E_k}(z) + i k x \phi} L(2 \tau_{E_k}(z) + i \phi) (1+O(k^{-1})) \frac{dx d \phi}{2\pi}  \]
 Alternatively, by \eqref{INT0E}, the  $dx$ integral  equals $\frac{e^{2 k E \tau_E(z) +  i k E_k \phi} - 1}{2 k E_k \tau_E(z) +  i k E_k  \phi}$. Since $e^{2 k E_k \tau_E(z)}$ is exponentially larger than $1$, we may absorb the second term of the numerator into the remainder estimate.  The integral of $d\phi$ contributes $1/\sqrt{k \cdot \frac{1}{4} \pa_\rho^2 \varphi(z_E)}$. Recalling the definition of $b_E$ (Definition \ref{d:zebe}), we obtain
\bee
I_1 &=& k^{m-1/2} e^{-k (\varphi(z) - 2 E_k \tau_{E_k}(z) - \varphi(z_E))} \frac{1}{\sqrt{2\pi}} \frac{1}{2 \tau_{E_k}(z)}  \frac{2}{\sqrt{\pa_\rho^2 \varphi(z_E)}}\frac{\tau_{E_k}(z)}{\tanh(\tau_{E_k}(z)}(1+O(k^{-1})) \notag \\
&=& k^{m-1/2}  e^{-k b_{E_k}(z)}  \frac{\sqrt 2}{\sqrt{\pi \pa_\rho^2 \varphi(z_E)}} \left( \frac{1}{1-e^{-2\tau_{E_k}(z)}} - \frac{1}{2} \right) (1+O(k^{-1})). \label{e:I1}
\eee
On the other hand, we can estimate the boundary terms
\be \label{I2} I_2 = \half K_{k, kE_k}(z) + O(k^{-\infty}) = k^{m-1/2} e^{-k b_{E_k}(z)} \frac{1}{\sqrt{2\pi \pa_\rho^2 \varphi(z_E)}} (1+O(k^{-1}))  \ee
 Combining $I_1, I_2$, we see the $-1/2$ term in the parenthesis in \eqref{e:I1} cancels out, and we get the result in the forbidden region. 
\epf

\section{\label{INTERSECT} Interface asymptotics: Proof of Theorem \ref{thm:interface} }

In Proposition \ref{INTERFACEf}, we first prove  a smoothed version of
Theorem \ref{thm:interface} in which the characteristic function ${\bf 1}_{[-\infty, E]}$  is
replaced by a Schwartz  test function $f \in \scal(\R)$\footnote{ $\scal(\R)$ denotes Schwartz space}. A density argument using the localization results of Section \ref{LOCALSECT} then extends the asymptotic result  from $f \in \scal(\R)$ to the characteristic function of any interval in $\R$ (Theorem \ref{p:weak}), and in particular proves
the leading order asymptotics stated in  Theorem \ref{thm:interface}. In Section \ref{EMSECT} we use the Euler-MacLaurin formula to obtain the stated remainder estimate for the  intervals $[-\infty, E]$ (Proposition \ref{EMINTERFACE}). The three main results of this section (Proposition \ref{INTERFACEf}, Theorem \ref{p:weak} and Proposition 
\ref{EMINTERFACE})  are more or less independent: each  uses a different techniques and sheds a different light
on the Erf-asymptotics of Theorem \ref{thm:interface}.

Recall the setup of Theorem \ref{thm:interface}, let $E$ be a regular value of $H:M \to \R$, $z_E \in H^{-1}(E)$ and $z_k = e^{\beta / \sqrt{k}} \cdot z_E$ for some constant $\beta$. We define a sequence of measures,
\begin{equation} \label{dmukzEbeta} d \mu_{k, z_E, \beta}(x) = \frac{1}{\Pi_k(z_k,z_k)}\sum_{j} \Pi_{k, j}(z_k) \delta_{ \sqrt{k} (j/k - E)}(x),\;\; (k = 1, 2, 3, \dots)
\end{equation}
and the  purported limiting measure,
\begin{equation}  \label{dmuinftyzEbeta}  d \mu_{\infty, z_E, \beta}(x) = e^{-\frac{1}{2} \left(\frac{2x}{\sqrt{\pa_\rho^2\varphi(z_E)}} - \beta \sqrt{\pa_\rho^2\varphi(z_E)}\right)^2} \frac {2 dx}{\sqrt{2 \pi \pa_\rho^2\varphi(z )}}. \end{equation}
In the following, we fix $E, z_E, \beta$ and  write $\mu_{k}$ and $\mu_\infty$ for $\mu_{k, z_E, \beta}$ and $ \mu_{\infty, z_E, \beta}$, respectively.

For any bounded continuous function $f\in C_b(\R)$, we define
\begin{equation} \label{Ifk} I_{k,f}(z):= k^{-m} \sum_{j} f(\sqrt{k}(\frac{j}{k} - E)) \Pi_{k,j}(z).   \end{equation}
Since $\Pi_k(z_k,z_k) = k^m(1+O(k^{-1/2}))$, we have $$ \int f d \mu_k =  I_{k,f}(z_k) (1 + O(k^{-\half})). $$

\subsection{Schwartz test functions}

Although we state the main result for Schwartz test functions, it is easily seen that much less is required of the test functions for  the asymptotics to be valid.

\bp  \label{INTERFACEf} With the same notation as in Theorem \ref{thm:interface}, and $f \in \scal(\R)$, we have
\[ I_{k,f}( e^{\beta/\sqrt{k}} \cdot z_E) = \int_{-\infty}^\infty f(x) e^{-\frac{1}{2} \left(\frac{2x}{\sqrt{\pa_\rho^2\varphi(z_E)}} - \beta \sqrt{\pa_\rho^2\varphi(z_E)}\right)^2} \frac {2 dx}{\sqrt{2 \pi \pa_\rho^2\varphi(z_E)}} +O_f(k^{-1/2}).\]
where the constant in $O_f$ depends on $f$. 
\ep

\bpf 
By the Fourier inversion formula,  we have
\bea \label{Izk}
I_{k,f}(z) & = &    k^{-m}\int_{\R} \hat{f}( t) e^{- i E \sqrt{k} t} 
K_{k}(e^{i t/\sqrt{k}} z,z) e^{-k\varphi(z)} dt. 
\eea
 We note that $t \to
\Pi_{k}(e^{i t/\sqrt{k}} z,z) $ is $2 \pi \sqrt{k}$-periodic (similarly
for the parametrix and remainder terms),  so the integrals converge
when $\hat{f} \in \scal(\R)$. We periodize $g(t) =\hat{f} e^{- i E t \sqrt{k} } $ by means
of the $\sqrt{k}$-periodization operator
\[ \pcal_{\sqrt{k}} g(t) := \sum_{\ell \in \Z} g(t+2\pi \sqrt{k} \ell), \;\; g \in \scal(\R), \]
which is periodic of period $2 \pi \sqrt{k}$. In fact the right side  converges
as long as $|g(t) | \leq C (1 + |t|)^{-1 - \epsilon}$. We write
$$ \pcal_{\sqrt{k}}(\hat{f} e^{- i E t \sqrt{k} } ) 
= \sum_{\ell \in \Z} \hat f(t+2\pi \sqrt{k} \ell) e^{- i E t \sqrt{k} - 2 \pi i k \ell E} =: e^{-i E t \sqrt{k}} \hat{F}_k(t), $$
 with $\hat F_k(t) =  \sum_{\ell \in \Z} \hat f(t+2\pi \sqrt{k} \ell) e^{-2\pi i (k \ell  E)}.$
Then,
\bea \label{Izk2}
I_{k,f}(z) & = & k^{-m} \int_{- \pi \sqrt{k}}^{\pi \sqrt{k}} \hat F_k(t)  e^{-i E t \sqrt{k} }
K_{k}(e^{i t/\sqrt{k}} z,z) e^{-k\varphi(z)} dt.
 \eea

We then localize the last  integral using a smooth cutoff $\chi(\frac{t }{(\log k)^2} )$, where $\chi \in C_0^{\infty}(\R)$  is supported in $(-1,1)$ and equals to $1$ in $(-1/2, 1/2)$.  When $\pi \sqrt{k} \geq |t| \geq  (\log k)^2$, the
off-diagonal Bergman kernel $K_{k}(e^{i t/\sqrt{k}} z,z)$ is rapidly decaying   at the rate
$O(e^{- (\log k)^2})$. Here, we use the standard off-diagonal estimate,  $|K_k(x,y)|
\leq C k^{m} e^{- \beta \sqrt{k} d(x,y)}$ for certain $\beta, C > 0$ (see Theorem \ref{AGMON1} of the Appendix). Hence,
\bea \label{Izk3}
I_{k,f}(z) & = & k^{-m} \int_{\R} \chi(\frac{t }{(\log k)^2} )\; \hat F_k(t)  e^{-i E t \sqrt{k}}
K_{k}(e^{i t/\sqrt{k}} z,z) e^{-k\varphi(z)} dt + O_f(k^{-\infty}), \eea
where the constant in $O_f(k^{-\infty})$  depends on $\|\hat{F}_k\|_{L^1(-\sqrt{k}, \sqrt{k})} =  \|\hat{f}\|_{L^1}$.

We then introduce the Boutet-de-Monvel-\Sjostrand parametrix for $K_k$, 
 \bea \label{Izk}
I_{k,f}(z) 
&=&   \int_{-\infty}^\infty  \chi(\frac{t }{(\log k)^2} )\; \hat F_k(t)    e^{-i E t \sqrt{k}}  e^{k \varphi(e^{i t/\sqrt{k}} \cdot z, z) - k \varphi(z)} A_k(e^{i t/ \sqrt{k}} z,z)  dt  
 \\
 &+&  \int_{-\infty}^\infty \chi(\frac{t }{(\log k)^2} )\; \hat F_k(t)    e^{-i E t \sqrt{k} } R_k(e^{i t/ \sqrt{k}} z,z)  dt + O_f(k^{-\infty}).
\eea
By the parametrix construction,   $R_k \in k^{-\infty} C^{\infty}(M \times M)$, hence  the second term is
$O(k^{-\infty})$ and may be absorbed into the remainder estimate.

As in \eqref{PSIDEF}, the phase function of  $I_{k,f}$ is 
\begin{equation} \label{PSIDEF2} \Psi(it,  z) = -it (\sqrt{k}E) + k \varphi(e^{it / 2 \sqrt{k}} \cdot z,e^{-{it} /2 \sqrt{k}} \cdot z) - k \varphi(z). \end{equation} 
Recall that  $z_k = e^{\beta/\sqrt{k}} z_E$ with $H(z_E) = E$.  
 Then
as $k \to \infty$,
\bea
\Psi(it,  e^{\beta/\sqrt{k}} z_E) & = &  -it (\sqrt{k}E) + k\left( \varphi(e^{(it / 2 +\beta)/\sqrt{k}} \cdot z_E,e^{(-it/ 2 + \beta)/ \sqrt{k}} \cdot z_E) -  \varphi(e^{\beta/\sqrt{k}} \cdot z_E) \right)\\
&=&  -it (\half\sqrt{k}  \pa_\rho \varphi(z_E) )  + k \left[ \left(\frac{it / 2 +\beta}{\sqrt{k}}\right) \pa_\rho \varphi(z_E) + \half \left(\frac{it / 2 +\beta}{\sqrt{k}}\right)^2 \pa_\rho^2\varphi(z_E) \right. \\ 
&& \left. -  \left(\frac{\beta}{\sqrt{k}}\right) \pa_\rho \varphi(z_E) - \half \left(\frac{\beta}{\sqrt{k}}\right)^2 \pa_\rho^2\varphi(z_E) \right] + g_3(it, z, \beta) \\
&=&   \half( (it / 2 +\beta)^2 - \beta^2)\pa_\rho^2\varphi(z_E) +  g_4(it, z, \beta),  
\eea
where
\be g_3 = O(k^{-1/2} (|\beta|^3+|t|^3)),\;\; g_4 =  O(k^{-1/2} (|\beta|^3 + |t|^3)). \label{g3g4}\ee

We substitute the Taylor expansion into the phase of  the first term of $I_{k, f}(e^{\beta/\sqrt{k}} z_E)$,
and also Taylor expand $e^{ g_4}$ to order $1$.  Let $e_1(x) = 1 - e^x$. Since $|t| \leq (\log k)^2$  on the support of the integrand, $|g_4| \leq  C (\frac{(\log k)^6}{\sqrt{k}})$ on $|t| \leq (\log k)^2$.  Since $e^x = 1 + e_1(x)$ where $e_1(x) \leq 2 x$ on $[0,  C (\frac{\log k)^6}{\sqrt{k}})]$,  $e^{g_4} = 1 + \tilde{g}_4$ where $\tilde{g}_4(k,t) \leq 2 g_4 \leq C_0 k^{-\half} (1 + t^3)$ on $ [0, (\log k)^2]$.   

We get
\bea  I_{k,f}(e^{\beta/\sqrt{k}} z_E) &
= &   \int_{\R} \chi(\frac{t}{ (\log k)^2})\hat F_k(t) e^{ \half( (it / 2 +\beta)^2 - \beta^2)\pa_\rho^2\varphi(z_E)}  (1 + \tilde{g}_4)) dt + O_f(k^{-1/2}) \\&& \\
& = & \int_{\R} \chi(\frac{t}{(\log k)^2})\hat F_k(t) e^{ \half( (it / 2 +\beta)^2 - \beta^2)\pa_\rho^2\varphi(z_E)}  dt + O_f(k^{-1/2})
\eea
where $ \chi(\frac{t}{ (\log k)^2}) |\wt g_4| \leq C_0 k^{-1/2} (1+ |t|^3)$ after integration against the Gaussian factor is of size $O(k^{-1/2})$.

Finally, we unravel the periodization $\hat{F}_k$ to  evaluate the first term.
\bea
&&\int_{\R} \chi(\frac{t}{(\log k)^2})\hat F_k(t) e^{ \half( (it / 2 +\beta)^2 - \beta^2)\pa_\rho^2\varphi(z_E)}  dt \\
&=& \int_{\R} \chi(\frac{t}{(\log k)^2})\hat f(t) e^{ \half( (it / 2 +\beta)^2 - \beta^2)\pa_\rho^2\varphi(z_E)} dt   \\
& + &\sum_{\ell \in \Z \RM 0} \int_{\R} \chi(\frac{t}{(\log k)^2})\hat f(t + 2 \pi \sqrt{k} \ell) e^{2\pi i k \ell E + \half( (it / 2 +\beta)^2 - \beta^2)\pa_\rho^2\varphi(z_E)} dt \\
&=& \int_{\R} \chi(\frac{t}{(\log k)^2})\hat f(t) e^{ \half( (it / 2 +\beta)^2 - \beta^2)\pa_\rho^2\varphi(z_E)} dt  + O_f(k^{-\infty})
\eea
where in bounding the terms with $\ell \neq 0$, we have used the fast decay property of the Schwarz function $\hat f(t)$, i.e., for any positive integer $N$, we have $|\hat f(t + 2\pi \sqrt{k} \ell)| < C_N (1+|t + 2\pi \sqrt{k} \ell|)^{-N}$ for some $C_N$, hence the sum over $\ell$ is convergent by $l^{-N}$ factor. 

Finally, removing the cut-off $\chi(t/(\log k)^2)$ will introduce an error as $\int_{(\log k)^2}^\infty e^{-a x^2} dx = O(k^{-\infty})$. We have
\bea  I_{k,f}(e^{\beta/\sqrt{k}} z_E) &=& \int_{\R} \hat f(t) e^{ \half( (it / 2 +\beta)^2 - \beta^2)\pa_\rho^2\varphi(z_E)}  dt  + O_f(k^{-1/2}) \\
&=&  \int_{-\infty}^\infty f(x) e^{-\frac{1}{2} \left(\frac{2x}{\sqrt{\pa_\rho^2\varphi(z_E)}} - \beta \sqrt{\pa_\rho^2\varphi(z_E)}\right)^2} \frac {2 dx}{\sqrt{2 \pi \pa_\rho^2\varphi(z_E)}} +O_f(k^{-1/2})
\eea
by the Plancherel theorem.  This completes the proof of Proposition \ref{INTERFACEf}.
\epf

\subsection{Proof of Weak Convergence result.}

We now use Proposition \ref{INTERFACEf} to prove the following weak-convergence result:

\begin{theo} \label{p:weak}
The sequence of measures $\mu_{k }$ converges to $\mu_{\infty}$ weak*
on $C_b(\R)$. In particular, for any interval $I$,  possibly unbounded, 
\[ \mu_{k}(I) \to \mu_{\infty}(I). \]
\end{theo}

This proves the leading order convergence statement of Theorem \ref{thm:interface}.  We first prove that 
$ \int_\R f(x) d \mu_{k}(x)  \to \int_\R f(x) d \mu_{\infty}(x)$ for $f \in C_b(\R)$.
We then use the results of Section \ref{LOCALSECT} to show that   $\{\mu_k\}_k$ is a tight  family of probability measures. 

\begin{proof}

\begin{lem} \label{p:Cc}
For any $f \in C_c(\R)$\footnote{$C_c(\R)$ denotes continuous functions of compact support.}, we have
\[ \lim_{k \to \infty} \int f d \mu_k = \int f d \mu_\infty. \]
\end{lem}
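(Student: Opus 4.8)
The plan is to deduce Lemma \ref{p:Cc} from Proposition \ref{INTERFACEf} by a routine density and approximation argument, the one structural input being that every $\mu_k$, and also $\mu_\infty$, is an honest probability measure.

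First I would record this normalization. Since $\Pi_k = \sum_j \Pi_{k,j}$ as orthogonal projections, restricting to the diagonal and contracting with the Hermitian metric gives $\Pi_k(z_k,z_k) = \sum_j \Pi_{k,j}(z_k)$, so by the definition \eqref{dmukzEbeta} of $\mu_k$ we have $\int_\R d\mu_k = \Pi_k(z_k,z_k)^{-1}\sum_j \Pi_{k,j}(z_k) = 1$. A change of variables $u = \frac{2x}{\sqrt{\pa_\rho^2\varphi(z_E)}} - \beta\sqrt{\pa_\rho^2\varphi(z_E)}$ in \eqref{dmuinftyzEbeta} turns $\mu_\infty$ into the standard Gaussian $e^{-u^2/2}\,du/\sqrt{2\pi}$, so $\mu_\infty(\R) = 1$ as well. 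In particular $|\int f\,d\mu_k| \le \|f\|_\infty$ and $|\int f\,d\mu_\infty| \le \|f\|_\infty$ for every bounded $f$.

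Next, fix $f \in C_c(\R)$ and $\epsilon > 0$, and choose $g \in C_c^\infty(\R) \subset \scal(\R)$ with $\|f - g\|_\infty < \epsilon$ (e.g. by mollifying $f$). Then
\[ \Big|\int f\,d\mu_k - \int f\,d\mu_\infty\Big| \le \Big|\int (f-g)\,d\mu_k\Big| + \Big|\int g\,d\mu_k - \int g\,d\mu_\infty\Big| + \Big|\int (g-f)\,d\mu_\infty\Big|, \]
and the first and third terms are each $\le \epsilon$ by the previous paragraph. For the middle term, observe that $\int g\,d\mu_k = \bigl(1 + O(k^{-1/2})\bigr)\, I_{k,g}(z_k)$ since $\Pi_k(z_k,z_k) = k^m(1 + O(k^{-1/2}))$, that $I_{k,g}(z_k)$ is bounded (indeed $|I_{k,g}(z_k)| \le \|g\|_\infty(1+O(k^{-1/2}))$ because $\mu_k$ is a probability measure), and that the right-hand side of Proposition \ref{INTERFACEf} is exactly $\int g\,d\mu_\infty$ by the definition \eqref{dmuinftyzEbeta}. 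Hence Proposition \ref{INTERFACEf} applied to $g$ gives $\int g\,d\mu_k - \int g\,d\mu_\infty = O_g(k^{-1/2}) \to 0$. Letting $k \to \infty$ yields $\limsup_{k\to\infty} \big|\int f\,d\mu_k - \int f\,d\mu_\infty\big| \le 2\epsilon$, and since $\epsilon > 0$ was arbitrary the lemma follows.

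Every step here is elementary, so there is no genuine obstacle; the only point requiring a moment's care is that the $O(k^{-1/2})$ discrepancy between $\int g\,d\mu_k$ and $I_{k,g}(z_k)$ — caused by normalizing with $\Pi_k(z_k,z_k)$ rather than $k^m$ — is harmless, because it multiplies the uniformly bounded quantity $I_{k,g}(z_k)$. This same scheme, combined with the tightness/localization estimates of Section \ref{LOCALSECT}, is what will upgrade Lemma \ref{p:Cc} to the full weak-$*$ statement of Theorem \ref{p:weak} on all of $C_b(\R)$ and to characteristic functions of intervals.
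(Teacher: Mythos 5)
Your proof is correct and follows essentially the same route as the paper: mollify $f$ to a smooth compactly supported (hence Schwartz) function, apply Proposition \ref{INTERFACEf} to it, and control the mollification error using the fact that $\mu_k$ and $\mu_\infty$ have uniformly bounded (indeed unit) total mass. The only difference is cosmetic — you make explicit the normalization $\mu_k(\R)=1$ and the $O(k^{-1/2})$ discrepancy between $\int g\,d\mu_k$ and $I_{k,g}(z_k)$, both of which the paper uses implicitly.
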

\bpf
Let $\eta(x)$ be a smooth non-negative compactly supported function, such that $\int \eta(x) dx = 1$. Let $\eta_\epsilon(x) = \epsilon^{-1} \eta(x/\epsilon)$, and $f_\epsilon = \eta_\epsilon \star f$. Then $f_\epsilon \to f$ in the $C^0$-norm and $f_\epsilon \in C^\infty_c(\R)$. Given  $\delta>0$, choose $\epsilon$  small enough such that $|f_\epsilon - f|_{C^0}< \delta$. Then, 
\[ \left|  \int f (d \mu_k - d \mu_\infty )\right| \leq \left|   \int f_\epsilon (d \mu_k -  d \mu_\infty )\right|  +  \int |f-f_\epsilon| d \mu_k  + \int|f-f_\epsilon| d \mu_\infty \leq  \left|   \int f_\epsilon (d \mu_k -  d \mu_\infty )\right|  + 2\delta \]
By Proposition \ref{INTERFACEf},  $ \left|   \int f_\epsilon (d \mu_k -  d \mu_\infty )\right| < \delta$ for $k$ sufficiently large. 
Since $\delta$ is arbitrarily, Lemma \ref{p:Cc} follows.
\epf

Next,we prove that the sequence of measure $\mu_k$ is tight and extend the range of the test function from $C_c(\R)$ to $C_b(\R)$. 
\begin{lem}
The sequence of measure $\{\mu_k\}$ is tight and,  for any $f \in C_b(\R)$, 
\[ \lim_{k \to \infty} \int f d \mu_k = \int f d \mu_\infty. \]
\end{lem}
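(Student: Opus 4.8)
The plan is to combine Lemma \ref{p:Cc} with tightness of the family $\{\mu_k\}$, and then to upgrade the test function class from $C_c(\R)$ to $C_b(\R)$ by a standard truncation. The first observation I would make is that, since $\sum_j \Pi_{k,j}(z_k) = \Pi_k(z_k,z_k)$, each $\mu_k$ is actually a \emph{probability} measure, and likewise $\mu_\infty$ has total mass $1$ (after the substitution $u = \frac{2x}{\sqrt{\pa_\rho^2\varphi(z_E)}} - \beta\sqrt{\pa_\rho^2\varphi(z_E)}$, which turns $\mu_\infty$ into the standard Gaussian). This reduces matters to showing $\{\mu_k\}$ is tight: given Lemma \ref{p:Cc}, weak-$*$ convergence on $C_b(\R)$ then follows, and convergence on arbitrary intervals follows from the portmanteau theorem because $\mu_\infty$ is absolutely continuous and hence has no atoms.

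For tightness I would use the Localization Lemma \ref{lm:localization}. The point $z_k = e^{\beta/\sqrt k}\cdot z_E$ lies on the same $\R_+$-orbit as $z_E\in M^E_{\max}$, so $z_k\in M^E_{\max}\cap M_{\max}$, its stabilizer is trivial, and $H(z_k) = E + O(k^{-1/2})$ with $|\sqrt k(H(z_k)-E)|$ bounded uniformly in $k$. By Propositions \ref{pp:eqv-decay} and \ref{pp:eqv-onshell}, $\Pi_{k,j}(z_k) = e^{-k b(z_k,\,j/k)}\,\Pi_{k,j}(z_j) = O(k^{m-1/2})\, e^{-k b(z_k,\,j/k)}$ uniformly for $j/k$ in a fixed neighborhood of $E$, and by Lemma \ref{lm:sympbE} the function $E'\mapsto b(z_k,E')$ is strictly convex with nondegenerate minimum $0$ at $E'=H(z_k)$, with second derivative bounded below uniformly in $k$ by continuity of $\pa_\rho^2\varphi$ near $z_E$. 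Hence the proofs of parts (1) and (3) of Lemma \ref{lm:localization} go through with $z$ replaced by $z_k$, yielding: for every $\epsilon>0$ there is $C$ with $\sum_{|j/k - H(z_k)|>Ck^{-1/2}}\Pi_{k,j}(z_k) < \epsilon k^m$ for all large $k$. Since $\Pi_k(z_k,z_k)=k^m(1+O(k^{-1/2}))$ and $|\sqrt k(j/k-E)|>M$ forces $|j/k-H(z_k)|>Ck^{-1/2}$ once $M$ is large, this gives $\mu_k(\R\setminus[-M,M])<2\epsilon$ for all large $k$; the finitely many remaining $k$ are handled individually since each $\mu_k$ is finitely (hence compactly) supported. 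That establishes tightness.

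The passage from $C_c(\R)$ to $C_b(\R)$ is then routine. Given $f\in C_b(\R)$ and $\epsilon>0$, I would choose $M$ with $\mu_k(\R\setminus[-M,M])<\epsilon$ for all $k$ (tightness) and $\mu_\infty(\R\setminus[-M,M])<\epsilon$ (Gaussian tail), pick $\phi\in C_c(\R)$ with $0\le\phi\le1$ and $\phi\equiv1$ on $[-M,M]$, and estimate
\be
\left| \int f\, d\mu_k - \int f\, d\mu_\infty \right| \le \left| \int f\phi\, d\mu_k - \int f\phi\, d\mu_\infty \right| + \|f\|_\infty \left( \mu_k(\R \setminus [-M,M]) + \mu_\infty(\R \setminus [-M,M]) \right).
\ee
The first term tends to $0$ by Lemma \ref{p:Cc} applied to $f\phi\in C_c(\R)$, and the second is $\le 2\|f\|_\infty\epsilon$, so $\limsup_k\big|\int f\, d\mu_k - \int f\, d\mu_\infty\big| \le 2\|f\|_\infty\epsilon$; letting $\epsilon\to0$ proves the convergence. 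Finally, for an arbitrary interval $I$ (bounded or not), absolute continuity of $\mu_\infty$ makes $I$ a $\mu_\infty$-continuity set, so the portmanteau theorem gives $\mu_k(I)\to\mu_\infty(I)$, which for $I=(-\infty,E]$ is exactly the leading-order assertion of Theorem \ref{thm:interface}. I expect the only genuinely nontrivial step to be checking that parts (1) and (3) of the Localization Lemma transfer from the fixed hypersurface $H^{-1}(E)$ to the moving base point $z_k$ with constants uniform in $k$; this should be painless because $z_k$ travels along the $\C^*$-orbit through $z_E$ and $H(z_k)\to E$, so all the geometric data entering the estimates ($\pa_\rho^2\varphi$, $\pa_E^2 b$, $\tau_E$) converge and admit uniform bounds.
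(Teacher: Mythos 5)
Your proof is correct and follows essentially the same route as the paper: tightness is deduced from part (3) of the Localization Lemma \ref{lm:localization}, and the extension from $C_c(\R)$ to $C_b(\R)$ is the same cutoff/truncation argument combined with Lemma \ref{p:Cc}. Your extra care in checking that the Localization Lemma transfers from a point of $H^{-1}(E)$ to the moving base point $z_k$ with uniform constants is a detail the paper leaves implicit, but it does not change the argument.
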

\bpf
To prove tightness, for any $\epsilon>0$, we need to find $R>0$ large enough, such that   $\mu_k( \R \RM [-R, R]) < \epsilon$ for all $k$. The existence of such $R$ is
an immediate consequence of  Lemma \ref{lm:localization} (3) on localization of sums.

We then prove weak convergence:
Let $\epsilon, R$ be as above. Let $\chi(x)$ be a cut-off function that equal to $1$ on $[-R, R]$ and equals to zero for $|x|>R+1$. Then
\[ \left|\int f (d\mu_k - d \mu_\infty) \right| \leq \left|\int f \chi(x) (d\mu_k - d\mu_\infty) \right| + \left|\int f (1-\chi) d \mu_k \right| + \left| \int f (1-\chi) d \mu_\infty\right|. \]
The last two terms can be bounded by $2 \epsilon \|f\|_{C^0}$, and the first term tends to $0$ as $k \to \infty$ since $f \chi \in C_c(\R)$. Thus 
\[ \lim_{k \to \infty} \left|\int f (d\mu_k - d \mu_\infty) \right| \leq 2 \epsilon \|f\|_{C^0}\]
for all $\epsilon$, and the left hand side has to be zero. This finishes the proof of the Lemma and hence  the proof of Proposition \ref{p:weak}. 
\epf

\end{proof}

\subsection{\label{EMSECT} Proof by the Euler-MacLaurin method} In this section we use
the Euler-MacLaurin method of Section \ref{FIRSTBSjAPP} to obtain a remainder estimate for the weak convergence, as claimed in 
Theorem \ref{thm:interface}.

Define
 \begin{equation}\label{PkEdef}
  I_{[-\frac{M}{\sqrt{k}}, \frac{M}{\sqrt{k}}]}(e^{\frac{\beta}{\sqrt{k}}}
  z_E) := k^{-m} \sum_{j: |\frac{j}{k} - E|  \leq \frac{M}{\sqrt{k}}} \Pi_{k,j} (e^{\frac{\beta}{\sqrt{k}}}
  z_E) \simeq \mu_{k, z_E, \beta} [-M, M].   \end{equation}
  These are sums of the type \eqref{Ifk} but with $f = {\bf 1}_{[-M, M]}$.
  As above, we use  that  $\Pi_k(z_k,z_k) = k^m(1+O(k^{-1/2}))$ to
  normalize by the simpler factor $k^{-m}$. The following Proposition (with trivial modification from $[-M,M]$ to $(-\infty,M]$)
  implies Theorem \ref{thm:interface} (with the remainder estimate). For the sake of brevity, we omit further details.
  
  \begin{prop} \label{EMINTERFACE} Let $z_E \in H^{-1}(E)$ and fix
  real numbers $M>0, \beta \in \R$. Then 
  $$ I_{[-\frac{M}{\sqrt{k}}, \frac{M}{\sqrt{k}}]}(e^{\frac{\beta}{\sqrt{k}}}
   z_E) =     \int_{-M}^M \sqrt{\frac{2}{\pi \pa_\rho^2\varphi(z_E)}}e^{-\frac{(2y -  \beta \pa_\rho^2\varphi(z_E))^2}{2\pa_\rho^2\varphi(z_E)}}(1+ O(k^{-\half}))dy, $$
  
  \end{prop}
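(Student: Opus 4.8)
The plan is to run the Euler--MacLaurin / Boutet de Monvel--\Sjostrand argument of \S\ref{FIRSTBSjAPP}, but for the short interval $[a_k,b_k]:=[E-\tfrac{M}{\sqrt k},\,E+\tfrac{M}{\sqrt k}]$, evaluated at the moving point $z_k=e^{\beta/\sqrt k}\cdot z_E$ (assuming, as in Theorem \ref{thm:interface}, that $\T$ acts freely at $z_E$, hence at $z_k$). Since $z_k$ lies only $O(k^{-1/2})$ from $H^{-1}(E)$, no contour deformation is needed and I would keep the contour $\{|e^w|=1\}$. By the evident analogue of Lemma \ref{CSCH} for a general subinterval (the interval character $\chi_{k[a_k,b_k]}(e^w)=\sum_{j\in\Z\cap[ka_k,kb_k)}e^{jw}$ sifts out exactly the weights in $k[a_k,b_k)$), together with \eqref{IDPI},
\[
  \Pi_{k,[a_k,b_k]}(z_k)=e^{-k\varphi(z_k)}\int_{-\pi i}^{\pi i} K_k\big(e^{-w/2}\cdot z_k,\,e^{-\bar w/2}\cdot z_k\big)\,\chi_{k[a_k,b_k]}(e^w)\,\frac{dw}{2\pi i}.
\]
Applying Proposition \ref{newchar} — with $[0,E_k]$ replaced by the lattice-rounded interval $[a_k',b_k']$, where $ka_k',kb_k'\in\Z$ and $|a_k'-a_k|+|b_k'-b_k|\le 2/k$ — splits this into an oscillatory term $I_1$ carrying the factor $L(w)\,k\int_{[a_k',b_k']}e^{kwx}\,dx$ and a boundary term $I_2=\tfrac12\big(\Pi_{k,ka_k'}(z_k)+\Pi_{k,kb_k'}(z_k)\big)$. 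Because $z_k$ is at $\R$-flow-distance $O(k^{-1/2})$ from $H^{-1}(a_k')$ and from $H^{-1}(b_k')$, Theorems \ref{E} and \ref{EQUIVINT} give $I_2=O(k^{m-1/2})$, so $k^{-m}I_2$ is absorbed into the $O(k^{-1/2})$ remainder and everything reduces to $I_1$.

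For $I_1$ I would substitute the parametrix \eqref{BBSjeq} (with its diagonal cutoff; the $C^\infty$ remainder $R_k$ contributes $O(k^{-\infty})$) and set $w=i\phi$, obtaining
\[
  I_1=k^{m+1}\int_{-\pi}^{\pi}\!\!\int_{[a_k',b_k']} e^{k\Psi(\phi,x)}\,L(i\phi)\,(1+O(k^{-1}))\,\frac{dx\,d\phi}{2\pi},\qquad \Psi(\phi,x)=\varphi\big(e^{-i\phi/2}z_k,e^{i\phi/2}z_k\big)-\varphi(z_k)+i\phi x .
\]
By $\T$-invariance of $\varphi$, $\Re\Psi(\phi,x)=-\tfrac12 D\big(e^{-i\phi/2}z_k,e^{i\phi/2}z_k\big)\le 0$, with $\le -c\phi^2$ near $\phi=0$ and $\le -c$ elsewhere on $[-\pi,\pi]$ (freeness of the $\T$-action at $z_k$); combined with the diagonal cutoff in the parametrix and the off-diagonal decay of $K_k$, this localizes the integral to $|\phi|\le(\log k)^2/\sqrt k$ modulo $O(k^{-\infty})$, exactly as in the proof of Proposition \ref{INTERFACEf}. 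Now rescale $\phi=t/\sqrt k$, $x=E+y/\sqrt k$, so $dx\,d\phi=k^{-1}dt\,dy$ and the $y$-range becomes $[\sqrt k(a_k'-E),\,\sqrt k(b_k'-E)]=[-M,M]+O(k^{-1/2})$. Using $\pa_\rho\varphi(z_E)=2E$ and the $\C^*$-equivariant Taylor expansion of $\varphi$ along the orbit of $z_E$ exactly as in the proof of Theorem \ref{E} — with $\varphi\big(e^{-i\phi/2}z_k,e^{i\phi/2}z_k\big)$ equal, modulo flat terms, to $\varphi(e^{\rho}z_E)$ at $\rho=\beta/\sqrt k-i\phi/2$ — the phase collapses to
\[
  k\Psi(t/\sqrt k,\,E+y/\sqrt k)=-\tfrac{\pa_\rho^2\varphi(z_E)}{8}\,t^2+i\,t\Big(y-\tfrac{\beta}{2}\pa_\rho^2\varphi(z_E)\Big)+O\!\big(k^{-1/2}(1+|t|^3)\big),
\]
the $-ikE\phi$ term of $k[\varphi(\cdots)-\varphi(z_k)]$ cancelling the $ikE\phi$ coming from $ik\phi x$ upon writing $x=E+y/\sqrt k$; also $L(it/\sqrt k)=1+O(k^{-1}(\log k)^4)$ on the localized region. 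Both error terms are integrable against the Gaussian, so
\[
  k^{-m}I_1=\int_{-M}^{M}\!\!\int_{\R} e^{-\frac{\pa_\rho^2\varphi(z_E)}{8}t^2+it(y-\frac{\beta}{2}\pa_\rho^2\varphi(z_E))}\,\frac{dt}{2\pi}\,dy\;\big(1+O(k^{-1/2})\big),
\]
and the Gaussian $dt$-integral equals $\sqrt{\tfrac{2}{\pi\pa_\rho^2\varphi(z_E)}}\,\exp\!\big(-\tfrac{(2y-\beta\pa_\rho^2\varphi(z_E))^2}{2\pa_\rho^2\varphi(z_E)}\big)$, which is the claimed integrand; together with $k^{-m}I_2=O(k^{-1/2})$ this would complete the proof.

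There is no conceptual obstacle: the argument is the forbidden-region Euler--MacLaurin computation of \S\ref{FIRSTBSjAPP} run at a moving base point, so the work is entirely in the uniformity of the error terms. The points to watch are: (i) the lattice rounding $a_k\mapsto a_k'$, $b_k\mapsto b_k'$ perturbs the $y$-window only by $O(k^{-1/2})$, which is harmless since the limiting integrand is bounded; (ii) the off-diagonal localization to $|t|\le(\log k)^2$, which rests on the Calabi diastasis positivity after \eqref{CD} and the freeness of the $\T$-action at $z_k$; (iii) the cubic remainder $O(k^{-1/2}(1+|t|^3))$ in the phase and the $O(k^{-1}(\log k)^4)$ error from $L(it/\sqrt k)$, both integrable against the Gaussian; and (iv) that the Euler--MacLaurin endpoint terms are genuine equivariant densities $\Pi_{k,ka_k'}(z_k),\Pi_{k,kb_k'}(z_k)$, hence $O(k^{m-1/2})$ by Theorems \ref{E} and \ref{EQUIVINT}. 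The modification needed to recover Theorem \ref{thm:interface} with its remainder — replacing $[-M,M]$ by the half-line interval $(-\infty,M]$, equivalently $[0,E]$ with a constraint only near $E$ — is identical once one drops the lower endpoint, whose contribution is $O(k^{-\infty})$ by the localization Lemma \ref{lm:localization}.
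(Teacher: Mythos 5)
Your proposal is correct and follows essentially the same route as the paper's proof: the interval character of Proposition \ref{newchar} applied to $[E-\tfrac{M}{\sqrt k},E+\tfrac{M}{\sqrt k}]$, the representation \eqref{REP} with the contour kept at $|e^w|=1$, the Boutet de Monvel--\Sjostrand parametrix, localization to $|t|\le(\log k)^2$, the rescaling $x=E+y/\sqrt k$, $t=\sqrt k\,\phi$, and the Gaussian $dt$-integral. Your handling of the endpoint contribution $I_2$ (bounding the two equivariant densities by $O(k^{m-1/2})$ and absorbing $k^{-m}I_2$ into the $O(k^{-1/2})$ remainder) and of the lattice rounding of the interval endpoints is in fact slightly more explicit than the paper's, and your sign bookkeeping in the quadratic phase lands exactly on the stated integrand.
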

  
  \begin{proof}
  We use  Proposition \ref{newchar} with $P = [E - \frac{M}{\sqrt{k}}, E + \frac{M}{\sqrt{k}}]$ and \eqref{REP} with $z_k = e^{\frac{\beta}{\sqrt{k}}} z_E$ to get 
  $ I_{[-\frac{M}{\sqrt{k}}, \frac{M}{\sqrt{k}}]}(z_k):= I_1 + I_2$ with  
\[ I_1 =  e^{-k \varphi(e^{\frac{\beta}{\sqrt{k}}}\, z_E)} k \int_{-i\pi  }^{i \pi  } \int_{E-\frac{M}{\sqrt{k}}}^{E +  \frac{M}{\sqrt{k}}} e^{k \varphi(e^{-w/2} e^{\frac{\beta}{\sqrt{k}}}\, z_E, e^{-\bar w/2} e^{\frac{\beta}{\sqrt{k}}}\, z_E) + k x w}  L(w)A_k   \frac{dx d w}{2 \pi i} \]
where as above $A_k =  (1+O(k^{-1}))$ is a semi-classical symbol and where
we omit the boundary term \eqref{I2} $I_2 =  \half(K_{k, k[E -  \frac{M}{\sqrt{k}}]}(e^{\frac{\beta}{\sqrt{k}}}\, z_E)+ K_{k,k[E +  \frac{M}{\sqrt{k}}]}(e^{\frac{\beta}{\sqrt{k}}} z_E ))$ since it has  lower order, indeed the first sum $I_1$ having $O(\sqrt{k})$ terms of almost constant order  and the boundary term $I_2$ having only two of the same order.

We change variables in the $dx$ integral to $x =  E + \frac{y}{\sqrt{k}}$ so that
the $e^{k x w} dx$ integral becomes $\frac{e^{k w E}}{ \sqrt{k}}  e^{\sqrt{k} y w} dy. $
The full ($k$-dependent)  phase function becomes
\[ \Psi(w, y) := k \varphi(e^{-w/2}  e^{\frac{\beta}{\sqrt{k}}}\, z_E, e^{-\bar w/2}  e^{\frac{\beta}{\sqrt{k}}}\, z_E)  -  k \varphi(e^{\frac{\beta}{\sqrt{k}}}\, z_E)+ k  E w + \sqrt{k} y w. \]
We then
change variables to $t = i  \sqrt{k} w$ to obtain a new phase resembling
\eqref{PSIDEF2},
\bea  \label{PSIDEF3}
\Psi(it,  y ) & = & i y t  -it (\sqrt{k}E) + k\left( \varphi(e^{(it / 2 +\beta)/\sqrt{k}} \cdot z_E,e^{(\bar{it} / 2 + \beta)/ \sqrt{k}} \cdot z_E) -  \varphi(e^{\beta/\sqrt{k}} \cdot z_E) \right)\\
&=&iy t  -it (\half\sqrt{k}  \pa_\rho \varphi(z_E) )  + k \left[ \left(\frac{it / 2 +\beta}{\sqrt{k}}\right) \pa_\rho \varphi(z_E) + \half \left(\frac{it / 2 +\beta}{\sqrt{k}}\right)^2 \pa_\rho^2\varphi(z_E) \right. \\ 
&& \left. -  \left(\frac{\beta}{\sqrt{k}}\right) \pa_\rho \varphi(z_E) - \half \left(\frac{\beta}{\sqrt{k}}\right)^2 \pa_\rho^2\varphi(z_E) \right] + g_3(it, z, \beta) \\
&=& iy t +  \half( (it / 2 +\beta)^2 - \beta^2)\pa_\rho^2\varphi(z_E) +  g_4(z, it, \beta),  
\eea
where
$$g_3 = O(k^{-1/2} (|\beta|^3+|t |^3),\;\; g_4 =  O(k^{-1/2} (|\beta|^3 + |t|^3)), $$
and the ranges are $t \in [-\pi \sqrt{k}, \pi \sqrt{k}]$ and $y \in [-M, M]$. 

We further cutoff the integrand to a $(\log k)^2$-neighborhood of
$t = 0$ using a smooth cutoff $\chi(\frac{ t }{(\log k)^2})$ where $\chi \equiv 1$ in a neighborhood of $t = 0$ and $\chi \equiv 0$ outside a slightly larger neighborhood, and observe that the part of the integral with the cutoff $(1 - \chi(\frac{t}{(\log k)^2}))$ is rapidly decaying in $k$ (c.f. the argument after \eqref{g3g4}).
Substituting  $\tau=it$ and integrating $dt$ gives,
\bea
I_1 &\simeq&   \int_{-M}^M \int_{-\infty}^{\infty} \chi(\frac{t}{(\log k)^2}) e^{ \half( (it / 2 +\beta)^2 - \beta^2)\pa_\rho^2\varphi(z_E)}  e^{i y t}   (1+O(k^{-\half})) \frac{dt  dy}{2\pi} \\&&\\
&\simeq &    \int_{-M}^M \int_{-\infty}^{\infty} e^{ \half( (it / 2 +\beta)^2 - \beta^2)\pa_\rho^2\varphi(z_E)}  e^{i y t}   (1+O(k^{-\half}))  \frac{dt  dy}{2\pi}  \\
&\simeq&  \int_{-M}^M \sqrt{\frac{2}{\pi \pa_\rho^2\varphi(z_E)}}e^{-\frac{(2y -  \beta \pa_\rho^2\varphi(z_E))^2}{2\pa_\rho^2\varphi(z_E)}}(1+ O(k^{-\half}))dy,
\eea
where $\simeq$ denotes asymptotics as $k \to \infty$.

\end{proof}

\section{Distribution of zero locus of a Random section: Proof of Theorem \ref{Zth}}
First we recall a proposition that links the expectation of the $(1,1)$ current defined by a random section from the Hilbert subspace $\scal_k \subset H^0(M, L^k)$. Recall our setup: Let $s =\sum_{j=1}^{\dim \scal_k} a_{k,j} s_{k,j}$ where $a_{k,j}$ are i.i.d. complex $N(0,1)$ random variables and $\{s_{k,j}\}$ is an orthonormal basis of $\scal_k$. Let $Z_s $ be the zero set of $s$ and let $[Z_s]$ be the current of integration over
$Z_s$.
\bp [\cite{ShZ}, Proposition 4.1] \label{p:zero}
\be \frac{1}{k} \E ([Z_s]) = \frac{\sqrt{-1}}{2\pi k} \ddbar \log \Pi_{\scal_k}(z) + c_1(L, h)\ee
where $\Pi_{\scal_k}(z)$ is the partial Bergman density function. 
\ep

The random zero locus in the interior of the allowed region is uniformly distributed, as if $\scal_k = H^0(M, L^k)$, indeed $\Pi_{\scal_k}(z) = 1 + O(k^{-1})$ is approximately constant. Our main interest is the distribution in the forbidden region. 

As before, we take $\acal = \{z \mid H(z) < E\}$ and $\fcal = \{z \mid H(z) > E\}$ for some regular value $E$ of $H$. Let $\fcal^E_{\max}$ be the open dense subset of $\fcal$ where the $S^1$-action acts freely, and where the $\R_+$-orbit of $z$ intersect $H^{-1}(E)$, say at $q_E(z)$. We define $\pi_E: H^{-1}(E) \cap M_{\max} \to X_E$ to be the Hamiltonian reduction of $H^{-1}(E)\cap M_{\max}$ by the $S^1$-action. Then we define another projection map
\[ \wb q_E: \fcal^E_{\max} \to X_E, \quad \wb q_E(z) = \pi_E \circ q_E(z). \]
The complex structure on the quotient is defined as the quotient of the semi-stable points by the $\C^*$ action, and   $\wb q_E$ is the restriction of this quotient map to $\fcal^E_{\max}$. Hence $\wb q_E$ is holomorphic by definition.

   Let $\varphi_E$ be the restriction of $\varphi$ to $H^{-1}(E)$. Since it is $S^1$-invariant,  it descends to a \Kahler potential on $X_E$ as well and is the \kahler potential of the reduced \kahler form $\omega_E$  on $X_E$. The following is a somewhat more precise version of Theorem \ref{Zth}:

\bp
For any compact subset $K \subset \fcal^E_{\max}$, we have the following weak* convergence
\be \lim_{k \to \infty} \frac{1}{k} \E ([Z_s]) = \wb q_E^{-1} (\omega_E) + 2 E \frac{\sqrt{-1}}{2\pi} (\ddbar \tau(z, E)) . \label{e:zero} \ee
In particular, the right hand side of \eqref{e:zero} is a smooth $(1,1)$-form of rank $(n-1)$ in $\fcal^E_{\max}$. \footnote{In the toric
case the leaves (orbits) of the $\C^*$ action vary holomorphically and $\ddbar \tau = 0$. }
\ep

\bpf
Using the expansion of the partial Bergman density in the forbidden region, we have for $z \in \fcal$, 
\[ \Pi_{kP}(z) =   k^{m-1/2} \sqrt{\frac{2}{\pi \pa_\rho^2 \varphi (z_E)}} \frac{e^{-kb(z,E_k)}}{1-e^{-|2\tau(z,E)|}}  (1+O(k^{-1})). \]
Using Proposition \ref{p:zero}, we get
\bea
 \frac{1}{k} \E ([Z_s]) &=& \frac{\sqrt{-1}}{2\pi} \ddbar (-b(z, E_k) + \varphi(z)) +O(k^{-1}) \\
 &=&  \frac{\sqrt{-1}}{2\pi} \ddbar (-b(z, E) + \varphi(z)) +O(k^{-1}) \\
 &=& \frac{\sqrt{-1}}{2\pi} \ddbar [\varphi(q_E(z)) + 2E \tau(z, E) ]+O(k^{-1})
 \eea
where we have used $c_1(L,h) = \frac{\sqrt{-1}}{2\pi} \ddbar \varphi(z)$ and expression of $b(z, E)$ from  \eqref{bEFORM}. 
Since $\varphi_E$ is the \Kahler potential on the symplectic reduction $H^{-1}(E)/S^1$, we have $\varphi(q_E(z))  = \varphi_E(\wb q_E(z))$,  and since $\wb q_E$ is holomorphic, we can commute $\ddbar$ with the pullback. This gives the desired result \eqref{e:zero}. For the rank statement, we note that the $(1,1)$-form on the right-hand-side of  \eqref{e:zero} vanishes on any $\C^*$-leaf inside $\fcal^E_{\max}$ hence is of rank $(n-1)$. 
\epf

\section{Example: The Bargmann-Fock model}

In this section we illustrate the results in the Bargmann-Fock model of the line bundle $\C^m \times \C \to \C^m$  with \kahler potential $\varphi = \|z\|^2$.
$$\hcal^2_k = \hcal^2_{h_{BF}^k} = \{f \in \ocal(\C^m): \int_{\C^m} |f(z)|^2 e^{- k \|z\|^2} d m(z) < \infty \} $$
where $dm = (\omega)^m/m!$ is Lebesgue measure, and $\omega = \frac{i}{2\pi} \ddbar \varphi = \frac{i}{2\pi} dz \wedge d\bar{z} = \pi^{-1} \sum_j dx_j \wedge dy_j$. As mentioned in \S \ref{EXSECT},  the  linear $S^1$ actions on $\C^m$ have the form,
$$e^{i \theta} \cdot (z_1, \dots, z_m) = (e^{i b_1 \theta} z_1, \dots, e^{i b_m \theta} z_m), 
\;\;\; b_j \in \Z, $$
with Hamiltonians 
$ H = \frac{1}{2}\pa_\rho|_{\rho=0} \sum_{j=1}^m ( |e^{b_j \rho} z_j|^2) =  \sum_j b_j |z_j|^2.$
We only consider the  diagonal $\T$ action  and 
Hamiltonian $H(z) =  \|z\|^2$, i.e. the isotropic harmonic oscillator in the Bargmann-Fock representation.

The usual quantum Hamiltonian for the harmonic oscillator is  $\hbar \left( \hat{N} + \frac{m}{2} \right)$ where $\hbar = 1/k$ where $\hat{N} = Z \cdot \frac{\partial}{\partial Z}$
is the number or Euler operator with eigenvalues/eigenfunctions
$$\hat{N} z^{\alpha} = |\alpha | z^{\alpha}. $$
where $|\alpha| := \sum_i \alpha_i$. Since we chose our normalization of $H$ to have minimum $0$, we will drop the $m/2$ constant, and define $H_k = \frac{1}{k} \hat{N}$.  It is an elliptic $S^1$ action in the sense that its moment map $H$ is proper and all weight spaces
$$\hcal_{k, j} = {\rm Span} \{z^{\alpha} = z_1^{\alpha_1} \cdots z_m^{\alpha_m}, |\alpha|=\alpha_1 + \cdots + \alpha_m = j\} $$  are finite dimensional.

We will fix the constant section $1 \in \Gamma(\C^m, \C)$ as the holomorphic reference frame, then an orthonormal basis is given by 
$$c_\alpha z^\alpha = \prod_{i=1}^m \sqrt{\frac{k^{\alpha_i+1}}{(\alpha_i)!}} z_i^{\alpha_i}$$ 
  thus the full Bergman kernel
$$\Pi_{k}(z,w) = K_{k} (z,w), \quad  K_{k}(z,w):  = \sum_{\alpha \in \Z_{\geq 0}^m}  \frac{k^{| \alpha |+m} z^{\alpha} \overline{w}^{\alpha}}{\alpha!} = k^{m} e^{k z\cdot \bar{w}} . $$
and the equivariant Bergman kernels are 
$$\Pi_{k,j}(z,w) = K_{k, j} (z,w), \quad  K_{k, j}(z,w): = \sum_{\alpha: | \alpha| = j}  \frac{k^{| \alpha |+ m} z^{\alpha} \overline{w}^{\alpha}}{\alpha!}. $$
The equivariant kernel is obtained from the full kernel by
\be \label{BF-avg} K_{k,j}(z,w) = \frac{1}{2\pi} \int_0^{2 \pi} e^{-i j \theta} K(e^{i \theta} z, w) d\theta \ee
And Bergman density $B_k(z)=k^m$, and the equivariant Bergman density is 
\[
\Pi_{k,j}(z) =   K_{k, j} (z,z) \|1^k(z)\|^2_{h^k(z)} =  e^{- k \|z\|^2} \sum_{\alpha: \| \alpha \| = j}  \frac{k^{m+| \alpha |} z^{\alpha} \overline{w}^{\alpha}}{\alpha!}\]

\begin{lem} 
As $k \to \infty$, and $E = j/k$,  the equivariant Bergman kernel is
$$K_{k, j}(z,z) = k^m \dashint_{\T} e^{- i j \theta} e^{k e^{i\theta} \|z\|^2} d\theta = k^{m}\frac{k^j}{j!} \|z \|^{2j} \simeq k^{m-1/2} \left( \frac{e \cdot \|z\|^2}{E} \right)^{kE} (2\pi E)^{-1/2} $$
and the equivariant Bergman kernel is
\[ B_{k,j}(z) =K_{k, j}(z,z) e^{-k \|z\|^2} = k^{m-1/2} (2\pi E)^{-1/2}  \left( \frac{ \|z\|^2}{E} \right)^{kE}  e^{-k (\|z\|^2-E)} \]
The maximum of $B_{k,j}(z)$ is obtained, when $\|z\|^2 = E$. 
\end{lem}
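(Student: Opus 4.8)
The plan is to compute everything in closed form from the explicit series and then apply Stirling's formula; no parametrix or stationary phase is needed in this model. First I would establish the contour representation: inserting the closed form $K_k(z,w)=k^m e^{kz\cdot\bar w}$ into the averaging identity \eqref{BF-avg} and setting $w=z$ gives at once $K_{k,j}(z,z)=k^m\dashint_{\T}e^{-ij\theta}e^{ke^{i\theta}\|z\|^2}\,d\theta$. To evaluate the $\theta$-integral, expand $e^{ke^{i\theta}\|z\|^2}=\sum_{n\ge 0}\frac{(k\|z\|^2)^n}{n!}e^{in\theta}$ and integrate term by term; orthogonality of the characters on $\T$ kills every summand except $n=j$, leaving $K_{k,j}(z,z)=k^m\frac{(k\|z\|^2)^j}{j!}=k^m\frac{k^j}{j!}\|z\|^{2j}$. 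Equivalently, this is just the multinomial identity $\sum_{|\alpha|=j}\frac{z^\alpha\bar z^\alpha}{\alpha!}=\frac{\|z\|^{2j}}{j!}$ applied directly to the series defining $K_{k,j}$.

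Next I would substitute $j=kE$ and use Stirling, $j!=\sqrt{2\pi j}\,(j/e)^{j}(1+O(1/j))$. A one-line computation gives $\frac{k^j}{j!}\|z\|^{2j}=\frac{1}{\sqrt{2\pi kE}}\left(\frac{e\|z\|^2}{E}\right)^{kE}(1+O(1/k))$, hence $K_{k,j}(z,z)\simeq k^{m-1/2}(2\pi E)^{-1/2}\left(\frac{e\|z\|^2}{E}\right)^{kE}$. The formula for the equivariant density then follows by multiplying by $\|1^k(z)\|^2_{h^k}=e^{-k\varphi(z)}=e^{-k\|z\|^2}$ and rewriting $e^{kE}e^{-k\|z\|^2}=e^{-k(\|z\|^2-E)}$, which yields $B_{k,j}(z)=k^{m-1/2}(2\pi E)^{-1/2}\left(\frac{\|z\|^2}{E}\right)^{kE}e^{-k(\|z\|^2-E)}$.

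For the location of the maximum I would set $r=\|z\|^2\ge 0$ and observe that $B_{k,j}(z)$ equals a positive $z$-independent constant times $r^{kE}e^{-kr}$; differentiating $\log(r^{kE}e^{-kr})=kE\log r-kr$ gives $k(E/r-1)$, which vanishes only at $r=E$, where the second derivative $-kE/r^2$ is negative, so $\|z\|^2=E$ (that is, $z\in H^{-1}(E)$) is the unique maximum. As a consistency check one can compare with Theorem \ref{E}: here $\varphi(e^{\rho}z)=e^{2\rho}\|z\|^2$, so $\pa_\rho^2\varphi(z_E)=4\|z_E\|^2=4E$, and the leading coefficient $k^{m-1/2}\sqrt{2/(\pi\pa_\rho^2\varphi(z_E))}$ is exactly $k^{m-1/2}(2\pi E)^{-1/2}$, matching the peak value of $B_{k,j}$.

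The only delicate point, and it is minor, is that writing $j=kE$ forces $kE\in\Z$, so the asymptotic statements should be read along the subsequence of $k$ for which $kE$ is an integer, or more generally along any sequence $j=j_k$ with $j_k/k\to E$ and $|j_k/k-E|=O(1/k)$, in which case Stirling is applied with $j_k$ and the discrepancy is absorbed into the $O(1/k)$ remainder. This is precisely the rational versus irrational ray distinction discussed in Section \ref{BSjIBGSECT}.
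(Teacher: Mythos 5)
Your proof is correct, but it takes a more elementary route than the paper's. For the exact formula the paper argues by $U(m)$-invariance and homogeneity that $K_{k,j}(z,w)=C_{k,j,m}(z\cdot\bar w)^j$ and then pins down the constant by integrating the density against Lebesgue measure and matching with $\dim V_k(j)=\binom{j+m-1}{m-1}$, whereas you get $k^m\frac{k^j}{j!}\|z\|^{2j}$ directly from orthogonality of characters on $\T$ (equivalently the multinomial identity) --- shorter and with no normalization integral to compute. For the asymptotics the paper deliberately evaluates the contour integral by deforming to the circle $|w|=e^{\tau}$ with $e^{\tau}=\|z\|^2/E$ and applying stationary phase at $\theta=0$; this is done to illustrate, in the exactly solvable model, the same steepest-descent mechanism used in the proofs of Theorems \ref{E} and \ref{thm:bulk}, and the paper only remarks in passing that the answer "agrees with the exact one after applying Stirling formula." You invert that logic: Stirling applied to the exact closed form is your proof, and the contour integral plays no asymptotic role. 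Both are valid; the paper's version buys a worked example of the general method, yours buys brevity and makes the error term $O(1/j)=O(1/k)$ completely explicit. Your closing remark about reading $j=kE$ along sequences with $|j_k/k-E|=O(1/k)$ is a point the paper glosses over, and your consistency check $\pa_\rho^2\varphi(z_E)=4E$ against Theorem \ref{E} is a worthwhile addition.
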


\begin{proof}

In fact, $K_{k, j}(z,w)$ is $U(m)$-invariant and so $K_{k,j}(z,w)$ is a function of $z \cdot \overline{w}$. It is also homogeneous of
degree $2j$ so it is a constant multiple $C=C_{k,j,m} $  of $(z \cdot \overline{w})^k$.  The constant may be determined from the fact that
$$ \dim V_k(j) = \int_{\C^m} B_{k,j}(z) d m(z) = \pi^{-m} \int_0^\infty e^{-kr^2}  \cdot C r^{2j}  \cdot r^{2m-1} \omega_{2m-1} dr $$ where $ \dim V_k(j) = {j+m-1 \choose m-1}$ is the number of partitions of $j$ in $m$ parts, $\omega_{d-1}=\frac{2 \pi^{d/2}}{\Gamma(d/2)} $ is the volume of $S^{d-1} \subset \R^d$. A straightforward computation gives $C_{k,j,m} = \frac{k^{m+j}}{j!}$. Hence $K_{k,j}(z,z)$ is the $j$-th term of the Taylor expansion $k^m e^{k \|z\|^2}$. 
But it is useful to compute the integral using the general method, which we will explain next. 

Let $E := E_{k,j}=\frac{j}{k}$. The first equality follows from \eqref{BF-avg}. We then change $\theta$ to $\theta + i \tau$ so that the complex phase is
$$\Psi_{z, \tau}(\theta) = - i E (\theta + i \tau) + e^{i (\theta + i \tau)} \|z\|^2. $$
The critical point equation is
 $$\frac{\partial}{i \partial \theta} \Psi_{z, \tau}(\theta) = - E +  e^{i (\theta + i \tau)} \|z\|^2 = 0 \iff E e^{- i \theta} = e^{ -\tau} \| z\|^{2}.  $$
 Since the right side is positive real, the only possible solution is  $\theta  = 0$ and for this we need to choose $\tau$ so that $e^{\tau} =\| z\|^{2}/E = H(z)/E$. With this choice of $\tau$,  and by deforming the contour to this $|w| = e^{\tau} \in \C$, the phase becomes $- i E(\theta + i \log (\| z\|^2/E) ) + Ee^{i \theta}$ and we have a non-degenerate critical point at $\theta = 0$ and an asymptotic expansion,
 $$K_{k, j}(z,z) = k^m \dashint_{\T} e^{- i k E (\theta + i \log (\| z\|^2/E) ) } e^{ k E e^{i \theta} } d\theta \simeq k^{m-1/2} \left( \frac{e\cdot \|z\|^2}{E} \right)^{kE} (2\pi E)^{-1/2}. $$
where we used the stationary phase formula for $d \theta$ integral. The result agrees with the exact one after applying Stirling formula.

The statement about the maximum of $B_{k,j}(z)$ can be obtained by solving
\[ \frac{d}{d|z|^2} \left(\frac{1}{k} \log B_{k,j}(z) \right) = -1 + E/\|z\|^2 = 0 \]
Indeed, the maximum of $B_{k,j}(z)$  occurs when $\|z\|^2=E$. 
 \end{proof}
 
Now we scale the equivariant Bargmann-Fock kernels around $H^{-1}(E)$
and prove Theorem \ref{EQUIVINT} in this case.
 Let $z_0 \in H^{-1}(E)$, i.e.
$\|z_0\|^2  = E$ and fix $u \in \R$. 
\begin{equation}\label{BFESCb} \Pi_{k, j} (z_0 (1+ \frac{u}{\sqrt{k}}), z_0 (1+ \frac{u}{\sqrt{k}})) = k^{m}
\dashint_{\T} e^{- i k E \theta} e^{ k \left( e^{i \theta } \|z_0\|^2  (1 + \frac{u}{\sqrt{k}})^2 -    \|z_0\|^2  (1 + \frac{u}{\sqrt{k}})^2 \right)} d\theta
\end{equation}
As  $k \to \infty$,
$$ e^{i \theta } \|z_0\|^2  (1 + \frac{u}{\sqrt{k}})^2 -    \|z_0\|^2  (1 + \frac{u}{\sqrt{k}})^2 = E (i \theta - \theta^2/2 + e_3(\theta))(1+ 2u/\sqrt{k} + u^2/k))$$ 
so the phase has the form $k E \Psi$ with
$$\begin{array}{l} \Psi =  i \theta   \left(  2u/\sqrt{k} + u^2/k \right)  -  \frac{\theta^2}{2} \left(1+2 u/\sqrt{k} + u^2/k \right) +  
 e_3(i \theta) \left(1+ 2u/\sqrt{k} + u^2/k \right), \end{array}$$ 
 where $e^x = 1 + x + x^2/2! + e_3(x)$.
We localize around $\theta = 0$ using a cutoff $\chi \in C_0^{\infty}(-1,1)$ and change variables $\theta \to k^{-1/2}  \theta$ to get
$$k^{-1/2} (2\pi)^{-1} \int_{\R} \chi(\theta/\sqrt{k})\; e^{i  \theta (2uE) - E \frac{\theta^2}{2}} A_k(\theta, u) d \theta,$$
where $A$ is a semi-classical symbol of order zero. Here, we absorbed the other terms, 
$$e^{ E( \frac{i \theta |u|^2}{\sqrt{k}} - \frac{\theta^2 u}{\sqrt{k}} - \frac{i \theta^3}{3! \sqrt{k}}) + O(1/k)}$$
into $A$. Since $A_0(\theta, u) = 1$
as $k \to \infty$ the integral tends to 
$$k^{-1/2} (2\pi)^{-1}\int_{\R} \; e^{i \theta (2 E u) - E \frac{\theta^2}{2}}  d \theta =(2 \pi k E)^{-1/2} e^{- 2 u^2 E  } (1 + O(k^{-1/2})), $$ 
Thus
\[ \Pi_{k, j} (z_0 (1+ \frac{u}{\sqrt{k}}), z_0 (1+ \frac{u}{\sqrt{k}})) = k^{m-1/2}\left( \frac{e^{-2 E u^2}}{\sqrt{2\pi E}} + O(k^{-1/2})\right)
\]
proving Theorem \ref{EQUIVINT}  in this case.

\section{Appendix  on off-diagonal decay estimates}

\begin{theo} (See Theorem 2 of \cite{Del} and Proposition 9 of \cite{L})] \label{AGMON1}  Let $M$ be a compact
\kahler manifold, and let $(L, h) \to M$ be a positive Hermitian line
bundle.  Then the exists a constant $\beta=\beta(M,L,h)>0$ such that
$$|\tilde\Pi_N(x, y)|_{\tilde h^N} \leq  CN^{m} e^{-\beta \sqrt{N} {d} (x, y)}.  $$
where ${d}(x,y)$ is the Riemannian distance with respect to
the \kahler metric $\tilde{\omega}$.\end{theo} 

The theorem is stated for strictly pseudo-convex domains in $\C^n$ but applies with no essential change to unit codisc bundles of positive Hermitian line bundles.

%
%
%
%
%
%
%
%
%

\end{document}